\documentclass[11pt]{amsart}
\usepackage[colorlinks=true, pdfstartview=FitV, linkcolor=blue, citecolor=blue, urlcolor=blue, breaklinks=true]{hyperref}
\usepackage{amsmath,amsfonts,amssymb,amsthm,mathrsfs,amscd,comment,paralist,stmaryrd,etoolbox,mathtools,enumitem}
\usepackage[usenames,dvipsnames]{color}
\usepackage{booktabs}
\usepackage[all]{xy}
\usepackage{tikz}
\usetikzlibrary{decorations.pathmorphing}
\usepackage{capt-of,caption,xcolor,tcolorbox}
\usetikzlibrary{arrows.meta, positioning,fit,backgrounds}
\usepackage{pdflscape} 
%
%

\theoremstyle{plain}
\newtheorem{thm}{Theorem}[section]

\newtheorem{lemma}[thm]{Lemma}
\newtheorem{cor}[thm]{Corollary}

\theoremstyle{definition}

\newtheorem{rmk}[thm]{Remark}

\newtheorem{nt}[thm]{Note}
\numberwithin{equation}{section}

\newcommand{\al}{\alpha}
\newcommand{\be}{\beta}
\newcommand{\ga}{\gamma}
\newcommand{\la}{\lambda}
\newcommand{\om}{\omega}
\newcommand{\del}{\delta}
\newcommand{\ep}{\epsilon}

\newcommand{\Z}{\mbox{$\mathbb Z$}}

\newcommand{\C}{\mbox{$\mathbb C$}}     

%
%

\newcommand{\g}{\mathfrak{g}}
\newcommand{\h}{\mathfrak{h}}

\newcommand{\gl}{\mathfrak{gl}}


%
%

\leftmargin=0in
\topmargin=0pt
\headheight=0pt
\oddsidemargin=0in
\evensidemargin=0in
\textheight=8.75in
\textwidth=6.5in
\parindent=0.5cm
\headsep=0.25in
\widowpenalty10000
\clubpenalty10000

%
%

\newtoggle{comments}
\newtoggle{details}
\newtoggle{prelimnote}
\newtoggle{detailsnote}




\toggletrue{detailsnote}   

\iftoggle{comments}{%
	\newcommand{\comments}[1]{
		\begin{center}
			\parbox{6.5 in}{
				\color{red}
				{\footnotesize \textbf{Comments:} #1}
				\color{black}}
	\end{center}}
}{%
	\newcommand{\comments}[1]{}
}

\iftoggle{details}{%
	\newcommand{\details}[1]{
		\ \\
		\color{OliveGreen}
		\begin{footnotesize}
			\textbf{Details:} #1
		\end{footnotesize}
		\color{black}
		\\
	}
}{%
	\newcommand{\details}[1]{}
}

\iftoggle{prelimnote}{%
	
}{%
	
}

%
\begin{document}
	%
	
	\title{On the Canonical Construction of Simple Lie Superalgebras}
	
	\author{J. Dhamothiran and Saudamini Nayak}
	\address{ Department of Mathematics, National Institute of Technology Calicut, NIT Campus P.O., Kozhikode-673 601, India}
	\email{dhamumjt@gmail.com, dhamothiran\_p230063ma@nitc.ac.in}
	\address{ Department of Mathematics, National Institute of Technology Calicut, NIT Campus P.O., Kozhikode-673 601, India}
	\email{saudamini@nitc.ac.in}

	\thanks{}

	\begin{abstract}
	Axioms for the generalization of root systems were defined and classified (irreducible) by V. Serganova, which precisely correspond to the root systems of basic classical Lie Superalgebras. Here, we present a unified method for constructing simple Lie Superalgebras from the abstract root system, with the choice of base having the minimal number of isotropic roots.	
	\end{abstract}
	
\subjclass[2020]{17B10, 17B22, 17B65.}
\keywords{Generalized root systems, Lie superalgebras}
	
	\maketitle
	\thispagestyle{empty}
	
	\setcounter{tocdepth}{1}
	
	
	%
	\section{Introduction}
	
	%
	Given a complex semisimple Lie algebra $\g$, one can achieve a root space decomposition by fixing a Cartan subalgebra. In this process, to each $\g$ one associates an object called a root system, which is classified by the Cartan matrix and Dynkin diagram attached to it. Thus, complex semisimple Lie algebras are classified by their Dynkin diagrams; these results were achieved between 1890 and 1900 through the work of E. Cartan and W. Killing. Later, root systems are studied extensively and independently due to their rich algebraic and combinatorial structures.
	
Meanwhile, starting with a root system it is natural to ask  how to retrieve the corresponding semisimple Lie algebra. In 1966, J. Tits achieved this by a subtle choice of signs in Chevalley's integral basis, and J. P. Serre defined it by taking a suitable quotient of a free Lie algebra. In 1967, V. G. Kac and R. V. Moody introduced another method for obtaining a simple Lie algebra from a generalized Cartan matrix. The corresponding algebra is called a Kac-Moody algebra and it can be extended to any arbitrary matrix. Correspondingly, one can obtain a contragredient Lie algebra.
	
	Following this, a contragredient Lie superalgebra was introduced as a generalization of a contragredient Lie algebra. In 1977, V. G. Kac classified finite dimensional simple contragredient Lie superalgebras \cite{kac1977lie, kac1977sketch}. In general, these algebras are close to being simple and not necessarily finite dimensional. 
	
	In this article, we present a unified approach to construct simple Lie superalgebras, motivated by the work of M. Geck \cite{geck2017construction}, who achieved a similar result in the context of Lie algebras. In particular, we construct a simple Lie superalgebra as a subalgebra of the general linear Lie superalgebra consisting of all linear maps on a complex vector superspace $M$ having a homogeneous Chevalley-type basis induced by a root system. For this purpose, we begin with the notion of generalized root systems (in short, GRS), as introduced by V. Serganova  \cite{serganova1996generalizations}. In that article, the author classified irreducible generalized root systems with isotropic roots, which coincide with the root systems of the basic classical Lie superalgebras from the Kac's list (with the exception of $A(1, 1)$ and $B(0, n)$), which are of contragredient type.
	
	Following work of Serganova various root systems are introduced. For instance, M. Yousofzadeh introduced extended affine root supersystems, \cite{yousofzadeh2016extended}, locally finite root supersystems, \cite{yousofzadeh2017locally}. Another combinatorial object, generalized reflection root system is introduced by M. Gorelik \cite{gorelik2017generalized}. These root systems are not finite (in general), and an irreducible finite one is exactly an irreducible GRS; this identification provides more flexibility in working with GRSs. 
	
	Section \ref{sec2} contains some preliminary results about root strings in a GRS $R$. In section \ref{sec3} we define specific homogeneous linear maps $e_i,f_i\text{ and }h_i$ on a finite-dimensional vector superspace. These maps satisfy relations that are expected of the Chevalley generators of a simple Lie superalgebra. In section \ref{sec4}, it is shown that $e_i,f_i$ indeed generate a simple Lie superalgebra $\g$ with root system isomorphic to $R$. Finally, in \ref{sec5} we sketch the quiver diagram of the positive system of all irreducible GRS, to emphasize some results.
	
	%
	\section{Preliminaries}\label{sec2}
	%
	
	We begin this section by recalling basic facts about generalized root systems; \cite{serganova1996generalizations}. Let $V$ be a finite dimensional complex vector space with a non-degenerate bilinear inner product $(\cdot,\cdot)$. A finite set $R\subset V\backslash\{0\}$ is called a generalized root system if the following holds.
		\begin{enumerate}
			\item[(1)] $R$ spans $V$ and $R=-R$;
			\item[(2)] if $\alpha,\beta\in R$ and $(\alpha,\alpha)\neq0$ then $(\beta,\alpha^{\vee}):=\dfrac{2(\beta,\alpha)}{(\alpha,\alpha)}\in\Z$ and $r_{\alpha}(\beta)=\beta-(\beta,\alpha^{\vee})\alpha\in R$;
			\item[(3)] if $\alpha\in R$ and $(\alpha,\alpha)=0$ then there exists an invertible map $r_{\alpha}:R\rightarrow R$ such that $r_{\alpha}(\beta)=\beta \text{ if }(\beta,\alpha)=0\text{ and }r_{\alpha}(\beta)\in\{\be+\al,\be-\al\}\text{ otherwise}$.
		\end{enumerate}
Let $R^{im}=\{\alpha\in R:(\alpha,\alpha)=0\} \text{ set of isotropic roots},R^{re}=\{\alpha\in R:(\alpha,\alpha)\neq0\}$, $\alpha^{\vee}:=\alpha$ for $\alpha\in R^{im}$ and $\alpha^{\vee}:=\dfrac{2\alpha}{(\alpha,\alpha)}$ for $\alpha\in R^{re}$. From this, if $\alpha\in R^{re}$ then $(\alpha,\alpha^{\vee})=2$ and if $\alpha\in R^{im}$ then $(\alpha,\alpha^{\vee})=0$.
	\begin{lemma}\label{lem0}
		If $\al\in R^{im}$, and $k\al\in R$,  then $k=\pm 1$.
	\end{lemma}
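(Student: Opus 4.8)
The plan is to reduce everything to two integrality constraints, one for $\alpha$ and one for $k\alpha$. First note that $k\neq 0$ since $0\notin R$, and that $(k\alpha,k\alpha)=k^2(\alpha,\alpha)=0$, so $k\alpha$ is again an isotropic root. I would then aim to produce a \emph{real} root $\gamma$ with the special property $(\alpha,\gamma^{\vee})=\pm 1$. Granting such a $\gamma$, axiom (2) applied to the real root $\gamma$ and the root $k\alpha$ gives $(k\alpha,\gamma^{\vee})=k(\alpha,\gamma^{\vee})=\pm k\in\Z$, so $k\in\Z$. Running the same construction with the isotropic root $k\alpha$ in place of $\alpha$ yields a real root $\gamma'$ with $(k\alpha,(\gamma')^{\vee})=\pm 1$; then axiom (2) applied to $\gamma'$ and $\alpha$ gives $(\alpha,(\gamma')^{\vee})=\frac{1}{k}(k\alpha,(\gamma')^{\vee})=\pm\frac{1}{k}\in\Z$, so $1/k\in\Z$. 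Since $k$ and $1/k$ are both integers, $k=\pm 1$.

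The crux, and the main obstacle, is the existence of a real root $\gamma$ with $(\alpha,\gamma^{\vee})=\pm 1$. I would obtain it in two moves. First I claim there is an \emph{isotropic} root $\delta$ with $(\alpha,\delta)\neq 0$. Indeed, non-degeneracy of the form together with the fact that $R$ spans $V$ forces some root $\beta$ to satisfy $(\alpha,\beta)\neq 0$. If $\beta$ is isotropic, take $\delta=\beta$. If $\beta$ is real, set $\delta:=r_{\beta}(\alpha)=\alpha-(\alpha,\beta^{\vee})\beta\in R$; since reflections preserve the form, $\delta$ is isotropic, and a short computation gives $(\alpha,\delta)=-(\alpha,\beta^{\vee})(\alpha,\beta)\neq 0$. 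Either way $\delta$ is an isotropic root non-orthogonal to $\alpha$ (and, since $(\delta,k\alpha)=k(\delta,\alpha)\neq 0$, the same $\delta$ serves for the symmetric step above).

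The second move reflects $\delta$ by the isotropic reflection $r_{\alpha}$. Since $(\delta,\alpha)\neq 0$, axiom (3) gives $\gamma:=r_{\alpha}(\delta)=\delta+\epsilon\alpha\in R$ with $\epsilon\in\{+1,-1\}$. Computing the norm, $(\gamma,\gamma)=(\delta,\delta)+2\epsilon(\delta,\alpha)+(\alpha,\alpha)=2\epsilon(\delta,\alpha)\neq 0$, so $\gamma$ is real; and since $(\alpha,\gamma)=(\alpha,\delta)$, we get $(\alpha,\gamma^{\vee})=\frac{2(\alpha,\delta)}{2\epsilon(\delta,\alpha)}=\epsilon=\pm 1$, exactly the root needed in the first paragraph. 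The only subtlety I anticipate is keeping track that the relevant inner products stay nonzero through these reflections, so that $\gamma$ is genuinely real and pairs with $\alpha$ as $\pm 1$; the displayed computations confirm this, and the argument is entirely self-contained within axioms (1)--(3), requiring nothing about root strings.
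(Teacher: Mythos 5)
Your proof is correct, and it closes the argument by a genuinely different (and more fully justified) route than the paper. The paper's own proof also begins by pairing $c\al$ against a real root $\be$ to force $c\in\Z$, but it simply asserts that $(c\al,\be^{\vee})\in\Z$ implies $c\in\Z$ --- which really requires a real root pairing with $\al$ to $\pm1$; your two-step construction ($\be\rightsquigarrow\delta=r_{\be}(\al)$ isotropic and non-orthogonal to $\al$, then $\gamma=r_{\al}(\delta)$ real with $(\al,\gamma^{\vee})=\pm1$) supplies exactly the missing ingredient, and is essentially the content of the paper's later Lemmas \ref{lem2} and \ref{lem1} derived from scratch. Where the two arguments truly diverge is in excluding $|k|\geq 2$: the paper tries to rule out $2\al\in R$ directly via axiom (3) and isotropy (an argument that is itself rather terse, since $(2\al,\al)=2(\al,\al)=0$ automatically), whereas you run the construction symmetrically with $k\al$ in place of $\al$ to obtain $1/k\in\Z$ as well, which kills all $|k|\geq2$ and all non-integral $k$ in one stroke. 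This is the isotropic analogue of the classical $(\al,\be^{\vee})(\be,\al^{\vee})$ trick for reduced root systems; it buys a self-contained proof from axioms (1)--(3) alone, at the cost of being slightly longer than the paper's (gappier) sketch.
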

	\begin{proof}
		For $\be\in R^{re}$ we have $(c\al,\be^{\vee})\in\Z$ implies that $c\in\Z$.
		Suppose that $2\al\in R$ and $2\al-\al\in R$, by (3) we have $(2\al,\al)\neq0$. Which contradict $\al\in R^{im}$. By induction we can show no other integral multiples are not in $R$.
	\end{proof}
	\begin{lemma}\label{lem4}
	If $\al\in R^{re}$ and $k\al\in R$, then $k\in\{\pm1/2,\pm1,\pm2\}$.
	\end{lemma}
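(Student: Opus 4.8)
The plan is to exploit the integrality requirement in axiom (2) twice, applying it symmetrically to the two real roots $\al$ and $k\al$. First I would record the easy structural observations: since $k\al\in R\subset V\setminus\{0\}$ and $\al\neq 0$, we have $k\neq 0$; and since $(k\al,k\al)=k^2(\al,\al)\neq 0$, the root $k\al$ lies in $R^{re}$ as well. Thus both $\al$ and $k\al$ are non-isotropic, and each has a well-defined coroot to pair against.

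Next I would apply axiom (2) with $\al$ playing the role of the real root. Because $(\al,\al)\neq 0$ and $k\al\in R$, the condition forces
\[
(k\al,\al^{\vee})=\frac{2(k\al,\al)}{(\al,\al)}=2k\in\Z .
\]
Symmetrically, I would apply axiom (2) with $k\al$ as the real root. Since $(k\al,k\al)\neq 0$ and $\al\in R$, the same condition gives
\[
(\al,(k\al)^{\vee})=\frac{2(\al,k\al)}{(k\al,k\al)}=\frac{2k(\al,\al)}{k^2(\al,\al)}=\frac{2}{k}\in\Z .
\]
So $k$ is simultaneously a half-integer and the reciprocal of a half-integer.

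Finally I would close the argument by pure bookkeeping. Writing $2k=m$ and $2/k=n$ with $m,n\in\Z$, multiplying gives $mn=4$; in particular $m$ and $n$ share a sign, and each is a divisor of $4$. Enumerating the admissible pairs $(m,n)\in\{(\pm1,\pm4),(\pm2,\pm2),(\pm4,\pm1)\}$ and reading off $k=m/2$ yields exactly $k\in\{\pm1/2,\pm1,\pm2\}$. There is no real obstacle here beyond noticing that axiom (2) can be invoked in \emph{both} directions; the only point that needs a word of care is verifying at the outset that $k\al\in R^{re}$, so that its coroot is defined and the second integrality relation is legitimate.
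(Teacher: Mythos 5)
Your proof is correct and is essentially identical to the paper's: both apply axiom (2) in the two directions to get $2k\in\Z$ and $2/k\in\Z$, then enumerate the possibilities. Your extra care in first checking that $k\al\in R^{re}$ (so that its coroot is defined) is a welcome, if minor, addition.
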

	\begin{proof}
	Since $k\al\in R^{re}$ then $(k\al,\al^{\vee})=\dfrac{2(k\al,\al)}{(\al,\al)}=2k\in\Z$ implies $k$ is integral multiple of 1/2 and $(\al,(k\al)^{\vee})=\dfrac{2(\al,k\al)}{(k\al,k\al)}=2/k\in\Z.$ All together this implies $k\in\{\pm1/2,\pm1,\pm2\}$.
	\end{proof}
	\begin{rmk}
	Following the lemma, if $\alpha$ is a root then $2\alpha$ can be a root. Define $R_{\bar 1}=R^{im}\cup\{\alpha\in R^{re}:2\alpha\in R\}$ and $R_{\bar 0}=R\backslash R_{\bar 1}.$ Clearly $\alpha \in R_{\bar 0}$ implies $\alpha \in R^{re}$ having property that $2\alpha \notin R.$ We have $R=R_{\bar0}\cup R_{\bar 1}.$ Whenever $\al\in R_{\bar j}$, we define degree $\overline{\al}=j$ for $j\in\Z_2.$ For $\al\in R_{\bar 0}$ ($\al\in R_{\bar 1}$) we call them even (respectively odd) roots. Also we consider $\overline{\al\pm\be}:=\overline{\al}+\overline{\be}$ whenever $\al,\be,\al\pm\be\in R$.
	\end{rmk}
	A base for a GRS is a subset $\Pi=\{\alpha_1,\alpha_2,\ldots,\alpha_n\}$ of $R$ such that any $\alpha\in R$ can be uniquely represented as a linear combination $\alpha=k_1\al_1+k_2\al_2+\cdots+k_n\alpha_n$ such that either all $k_i\in\Z_{\geq 0}$ or all $k_i\in\Z_{\leq 0}$; correspondingly, we have a partition $R=R^+\cup R^-$ where $R^+=R\cap\Z_{\geq 0}-span~\Pi$ is set of positive roots and $R^-=-R^+$ is set of negative roots. Following this we have a partial order $\preceq$ on $V$ by setting $\la\preceq\mu$ if $\mu-\la\in R^+$.
	
	A generalized root system $R$ is called reducible if it can be represented as a direct orthogonal sum of two nonempty generalized root systems $R_1$ and $R_2$ : $V = V_1 \oplus V_2$, $R_i\subseteq V_i$ and $R = R_1 \cup R_2$. Otherwise the system is called irreducible. By the classification of irreducible GRS, we have a complete list of irreducible GRS (\cite{serganova1996generalizations}, Theorem 5.10). Throughout this article, we let $R$ be an irreducible GRS with isotropic roots, also we let a distinguished base $\Pi$ for these root systems be given in \cite{frappat1989structure,iohara2001central,kac1977lie}.
	
	Let $I=\{1,2,\ldots,n\}$ be the index set of $\Pi$, for $i\in I$, we denote $\bar{i}=\overline{\al_i}.$ For $\alpha=\sum\limits_{i=1}^{n}k_i\alpha_i\in R$ define the height $ht(\alpha)=\sum\limits_{i=1}^{n}k_i$. Also for $\alpha\in R$, we define $sgn(\al)=\begin{cases}1&\text{ if }\al\in R^+\\-1&\text{ if }\al\in R^-\end{cases}$ and suppose that $(\al,\al_i)\neq 0$ for $\al\in R$ and $\al_i\in\Pi\cap R^{im}$, we let $(\al,\al_i):=sgn(\al)$. 
	
\subsection{Root strings.}	Let $\al,\be\in R$, we can uniquely define two integers $r,q\geq0$ by the conditions that $$\be-r\al,\ldots,\be-\al,\be,\be+\al,\ldots,\be+q\al\in R$$ and $\be-(r+1)\al\notin R$, $\be+(q+1)\al\notin R$. The above sequence of roots is called the $\al$-string through $\be$ and we denote $m_{\al}^+(\be)=q+1$ and $m_{\al}^-(\be)=r+1$. We also write $m_i^{\pm}(\be):=m_{\al_i}^{\pm}(\be)$ with $i\in I$. If $\al\in R^{re}$ and $\be\in R$ then the root string property $(\be,\al^{\vee})=m_{\al}^-(\be)-m_{\al}^+(\be)$ holds (\cite{yousofzadeh2017locally}, Lemma 3.12).

	We now collect some results which will be useful in the sequel.
	\begin{lemma}[\cite{serganova1996generalizations}, Lemma 1.8]\label{lem2} Let $\al\in R^{im},\be\in R^{re}$ such that $(\be,\al)\neq 0$.
		\begin{enumerate}
			\item if $r_{\al}(\be)=\be\pm\al\in R^{im}$ then $(\al,\be^{\vee})=\mp1$,
			\item if $r_{\al}(\be)=\be\pm\al\in R^{re}$, then $(\al,\be^{\vee})=\mp2$.
		\end{enumerate}
	\end{lemma}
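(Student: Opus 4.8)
The plan is to reduce both statements to the coroot pairing $(\al,\be^{\vee})=\tfrac{2(\al,\be)}{(\be,\be)}$ and to exploit the isotropy $(\al,\al)=0$ together with the integrality built into axiom~(2).

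For part~(1), I would argue by a single direct computation. Since $r_{\al}(\be)=\be\pm\al\in R^{im}$, its norm vanishes, so expanding $0=(\be\pm\al,\be\pm\al)=(\be,\be)\pm 2(\al,\be)$ (the term $(\al,\al)$ drops out by isotropy) gives $2(\al,\be)=\mp(\be,\be)$. Dividing through by $(\be,\be)\neq 0$ yields $(\al,\be^{\vee})=\mp 1$ at once. No integrality input is needed here.

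For part~(2), the same expansion only shows $(\be,\be)\pm 2(\al,\be)\neq 0$, which is not enough to pin down the value, so the idea is to bring in a second real root. Write $\ga:=r_{\al}(\be)=\be\pm\al\in R^{re}$ and set $n:=(\al,\be^{\vee})$. Because $\be\in R^{re}$, axiom~(2) forces $n\in\Z$; because $\ga\in R^{re}$, axiom~(2) also forces $(\al,\ga^{\vee})\in\Z$. Using $(\al,\ga)=(\al,\be)$ (again by isotropy of $\al$) and $(\ga,\ga)=(\be,\be)(1\pm n)$, the second pairing simplifies to $(\al,\ga^{\vee})=\tfrac{n}{1\pm n}$. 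The reality of $\ga$ gives $(\ga,\ga)\neq 0$, hence $n\neq\mp 1$, and the hypothesis $(\al,\be)\neq 0$ gives $n\neq 0$.

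The main obstacle, and the crux of the argument, is the final number-theoretic squeeze: I must show that the only nonzero integer $n$ for which $\tfrac{n}{1\pm n}$ is again an integer is $n=\mp 2$. Writing $\tfrac{n}{1+n}=1-\tfrac{1}{1+n}$ (and symmetrically for the lower sign) shows that $1\pm n$ must divide $1$, so $1\pm n\in\{\pm 1\}$ and therefore $n\in\{0,\mp 2\}$; discarding $n=0$ leaves $n=\mp 2$, as claimed. The whole difficulty is thus concentrated in recognizing that applying the integrality axiom to the \emph{image} root $\ga$, rather than to $\be$ alone, supplies exactly the extra divisibility constraint needed to isolate the value.
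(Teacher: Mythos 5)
Your proposal is correct and follows essentially the same route as the paper: part (a) by expanding the vanishing norm of $\be\pm\al$ using the isotropy of $\al$, and part (b) by applying the integrality axiom to the image root to get $(\al,(\be\pm\al)^{\vee})=\tfrac{n}{1\pm n}\in\Z$ with $n=(\al,\be^{\vee})$, then solving the divisibility condition. Your write-up is marginally more careful than the paper's in explicitly discarding $n=0$ via the hypothesis $(\al,\be)\neq 0$, a step the paper leaves implicit.
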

	\begin{proof} For (a) use the fact $(\be\pm\al,\be\pm\al)=0$ which implies that $(\be,\be)\pm2(\be,\al)=0$ and so $(\al,\be^{\vee})=\mp1$. 
		
		For (b) we know $(\al,(\al+\be)^{\vee})=\frac{2(\al,\be)}{(\be,\be)+2(\al,\be)}=\frac{(\al,\be^{\vee})}{1+(\al,\be^{\vee})}\in\Z$	only if $(\al,\be^{\vee})=-2$. Similarly if $\be-\al\in R^{re}$ implies that $(\al,\be^{\vee})=2$.
	\end{proof}
From the Lemma for a $\be$-string through $\al$ we have the following immediate bounds.
    \begin{cor}\label{cor1}
		For $\al\in R^{im},\be\in R^{re}$ with $(\be,\al)\neq0,$ we have
		\begin{itemize}
			\item[(a)] if $\be+\al\in R^{im}$, then $m_{\be}^+(\al)=2$,
			\item [(b)] if $\be+\al\in R^{re}$, then $m_{\be}^+(\al)=3$.
		\end{itemize}
		Similarly, $m_{\be}^-(\al)=2$ if $\be-\al\in R^{im}$ and $m_{\be}^-(\al)=3$ if $\be-\al\in R^{re}$.
	\end{cor}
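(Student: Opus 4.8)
The plan is to determine the integer $(\al,\be^{\vee})$ in each case and then use the root string property, reinforced by a short non-integrality argument, to fix the exact length of the $\be$-string through $\al$. Since $\be\in R^{re}$, property (2) guarantees $(\gamma,\be^{\vee})\in\Z$ for every $\gamma\in R$, and the root string property $(\al,\be^{\vee})=m_{\be}^-(\al)-m_{\be}^+(\al)$ applies to the $\be$-string through $\al$. It will also be convenient to record the norms along the string, $(\al+k\be,\al+k\be)=2k(\al,\be)+k^2(\be,\be)$, which tells us which members of the string are isotropic and which are real.

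In case (a), $\be+\al\in R^{im}$ forces $(\be,\be)+2(\al,\be)=0$, i.e.\ $(\al,\be^{\vee})=-1$ (this is the computation in Lemma \ref{lem2}(a)); hence $(\al+k\be,\al+k\be)=k(k-1)(\be,\be)$, so only $\al$ and $\al+\be$ are isotropic while every other member of the string is real. As $\al+\be\in R$ we already have $m_{\be}^+(\al)\geq2$, so it remains to rule out $\al+2\be\in R$. Setting $\del:=\al+\be\in R^{im}$ one computes $(\del,\be^{\vee})=1$, and if $\al+2\be=\del+\be$ were a (necessarily real) root then, exactly as in the proof of Lemma \ref{lem2}(b), $(\del,(\del+\be)^{\vee})=\tfrac{(\del,\be^{\vee})}{1+(\del,\be^{\vee})}=\tfrac12\notin\Z$ would contradict the integrality in (2). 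Thus $\al+2\be\notin R$ and $m_{\be}^+(\al)=2$.

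Case (b) is parallel. Here $\be+\al\in R^{re}$ gives $(\al,(\be+\al)^{\vee})=\tfrac{(\al,\be^{\vee})}{1+(\al,\be^{\vee})}\in\Z$, which together with $(\al,\be^{\vee})\in\Z\setminus\{0\}$ forces $(\al,\be^{\vee})=-2$ (Lemma \ref{lem2}(b)). Now $r_{\be}(\al)=\al+2\be\in R$ since $\be$ is real, so with $\al+\be\in R$ we get the three consecutive roots $\al,\al+\be,\al+2\be$ and hence $m_{\be}^+(\al)\geq3$; the norm formula $k(k-2)(\be,\be)$ shows $\al+2\be$ is the next isotropic root, and taking $\del:=\al+2\be$ with $(\del,\be^{\vee})=2$ the same non-integrality argument ($\tfrac23\notin\Z$) rules out $\al+3\be\in R$, giving $m_{\be}^+(\al)=3$. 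Finally, the two statements about $m_{\be}^-(\al)$ follow by applying (a) and (b) with $\be$ replaced by $-\be\in R^{re}$, using $m_{\be}^-(\al)=m_{-\be}^+(\al)$ and $R^{im}=-R^{im}$.

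The routine parts are the norm bookkeeping and the sign reductions for the $m_{\be}^-$ statements; the one genuine point—and the step I expect to do the real work—is showing that the string does not extend past the predicted top, since the root string length formula alone yields only $m_{\be}^+(\al)-m_{\be}^-(\al)$ and not $m_{\be}^+(\al)$ itself. The non-integrality of $\tfrac{(\del,\be^{\vee})}{1+(\del,\be^{\vee})}$ for the top isotropic root $\del$ is precisely what closes this gap.
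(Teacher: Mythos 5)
Your proof is correct, and it reaches the conclusion by a route that differs from the paper's at the decisive step. Both arguments start from the same input, Lemma \ref{lem2}, which fixes $(\al,\be^{\vee})=-1$ in case (a) and $(\al,\be^{\vee})=-2$ in case (b). The paper then asserts $m_{\be}^-(\al)=1$, i.e.\ $\al-\be\notin R$, and reads off $m_{\be}^+(\al)$ from the string-length formula $(\al,\be^{\vee})=m_{\be}^-(\al)-m_{\be}^+(\al)$; note that the claim $\al-\be\notin R$ is stated there without justification (it does follow from Lemma \ref{lem2}, since $\al-\be$ would have norm $2(\be,\be)$ resp.\ $3(\be,\be)$, hence be real and force $(\al,\be^{\vee})=2$). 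You instead control the top of the string directly: the lower bound on $m_{\be}^+(\al)$ comes from the hypothesis together with, in case (b), the reflection $r_{\be}(\al)=\al+2\be\in R$ (exactly the observation of Note \ref{nt1}), and the upper bound comes from rerunning the integrality computation of Lemma \ref{lem2}(b) one step further up the string, where it yields the non-integers $\tfrac12$ and $\tfrac23$. Your version therefore never invokes the root string property nor the unstated fact that $\al-\be\notin R$, which makes it more self-contained and correctly identifies where the real work lies (capping the top of the string); the paper's version is shorter on the page but leans on that implicit claim about the bottom of the string. Your reduction of the $m_{\be}^-$ statements to the $m_{\be}^+$ ones via $\be\mapsto-\be$ is also valid.
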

	\begin{proof}
		(a) If $\al+\be\in R^{im}$ then we have $m_{\be}^-(\al)=1$ as $\be-\al\notin R$ and by above lemma $(\al,\be^{\vee})=-1$. Hence by root string property we have $-1=(\al,\be^{\vee})=m_{\be}^-(\al)-m_{\be}^+(\al)=1-m_{\be}^+(\al)$ and so $m_{\be}^+(\al)=2$. Thus $\{\al,\al+\be\}$ be the root string of $\be$ through $\al$.
		
		(b) If $\al+\be\in R^{re}$ then by above lemma $(\al,\be^{\vee})=-2$. Hence by root string property we have $-2=(\al,\be^{\vee})=m_{\be}^-(\al)-m_{\be}^+(\al)=1-m_{\be}^+(\al)$ and so $m_{\be}^+(\al)=3$. Thus $\be$-string through $\al$ be $\{\al,\al+\be,\al+2\be\}$.
	\end{proof}
	\begin{nt}\label{nt1}
		By the above corollary, suppose that $\be+\al\in R^{re}$, then $r_{\be}(\al)=\al-(\al,\be^{\vee})\be=\al-(m_{\be}^-(\al)-m_{\be}^+(\al))\be=\al-(1-3)\be=\al+2\be\in R^{im}$ as $r_{\be}$ preserves the bilinear inner product. Similarly, $\al-2\be\in R^{im}$ if $\al-\be\in R^{re}$. Also if $\be\in R_{\bar{0}}$, then $\be\pm\al\in R_{\bar{1}}^{re}$ and so $2(\be\pm\al)\in R$.
	\end{nt}
	\begin{lemma}[\cite{serganova1996generalizations}, Lemma 1.9]\label{lem1} Let $\al\in R^{im}, \be\in R$ and $(\beta,\al)\neq 0$, then $(r_{\al}(\beta),\al)\neq 0$. Further if $\be\in R^{im}$, then $(r_{\al}(\beta),r_{\al}(\beta))\neq 0$ and $(r_{\al}(\be),\be)\neq0$.
	\end{lemma}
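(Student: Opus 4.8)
The plan is to reduce every assertion to the defining property of the reflection $r_\al$ attached to an isotropic root, namely axiom (3). Since $\al\in R^{im}$ and $(\be,\al)\neq 0$, property (3) forces $r_\al(\be)=\be+\al$ or $r_\al(\be)=\be-\al$; I would write this uniformly as $r_\al(\be)=\be\pm\al$ and carry the unknown sign through the computation. The crucial structural observation is that $(\al,\al)=0$, so whenever I expand the bilinear form of an expression involving $r_\al(\be)$, the $(\al,\al)$-term drops out, and the sign ambiguity $\pm$ never affects whether the outcome vanishes. Consequently all three statements collapse to one-line bilinearity computations, and I expect no genuine obstacle beyond invoking (3) correctly and tracking which isotropy hypotheses are in force.

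For the first assertion I would compute directly
\[
(r_\al(\be),\al)=(\be\pm\al,\al)=(\be,\al)\pm(\al,\al)=(\be,\al),
\]
using $(\al,\al)=0$. Since $(\be,\al)\neq 0$ by hypothesis, this is nonzero independently of the sign. Note this step uses only $\al\in R^{im}$ and makes no assumption on $\be$ beyond $\be\in R$, which is why it is stated first and separately.

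For the second assertion I would then add the hypothesis $\be\in R^{im}$, so that $(\be,\be)=0$ as well, and expand both quantities in the same manner:
\[
(r_\al(\be),r_\al(\be))=(\be\pm\al,\be\pm\al)=(\be,\be)\pm2(\be,\al)+(\al,\al)=\pm2(\be,\al),
\]
which is nonzero because $(\be,\al)\neq 0$, and
\[
(r_\al(\be),\be)=(\be\pm\al,\be)=(\be,\be)\pm(\al,\be)=\pm(\be,\al)\neq 0.
\]
This completes the argument. The only point I would take care to keep straight is that the two isotropy conditions $(\al,\al)=0$ and $(\be,\be)=0$ are precisely what annihilate the unwanted quadratic terms, so that each expression reduces to a nonzero scalar multiple of $(\be,\al)$; the sign of $r_\al(\be)$ is immaterial throughout.
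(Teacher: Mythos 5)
Your proposal is correct and follows exactly the same route as the paper: expand $r_{\al}(\be)=\be\pm\al$ by bilinearity and use the isotropy conditions $(\al,\al)=0$ (and $(\be,\be)=0$ for the second part) to reduce each quantity to a nonzero multiple of $(\be,\al)$. The paper's proof is the same three one-line computations, so there is nothing to add.
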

	\begin{proof} Since $r_{\al}(\be)=\be\pm\al$, $(r_{\al}(\beta),\al)=(\be,\al)\pm(\al,\al)\neq 0$. For $\be\in R^{im}$, we have $(\be\pm\al,\be\pm\al)=\pm2(\al,\be)\neq 0$ and $(r_{\al}(\beta),\be)=(\be,\be)\pm(\al,\be)\neq 0$.
	\end{proof}	
	
	\begin{lemma}[\cite{serganova1996generalizations}, Lemma 1.10]\label{lem3} Let $\al,\be\in R^{im}$ and $(\al,\be)\neq 0$, then $\be+k\al\in R$ (with $k\neq 0$) only if $k=\pm 1$.
	\end{lemma}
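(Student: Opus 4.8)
The plan is to turn the isotropic configuration $\{\be+k\al\}$ into a problem about \emph{real} roots, where the strong integrality and reflection constraints of axioms (2)--(3) and of Lemma \ref{lem2} become available. The decisive elementary observation is that, because both $\al$ and $\be$ are isotropic, for every integer $k\neq 0$ the vector $\gamma:=\be+k\al$ has norm $(\gamma,\gamma)=(\be,\be)+2k(\al,\be)+k^2(\al,\al)=2k(\al,\be)\neq 0$; thus \emph{any} such $\gamma$ that happens to lie in $R$ is automatically a real root, and moreover $(\al,\gamma)=(\al,\be)\neq 0$, so $\al$ and $\gamma$ sit in exactly the situation governed by Lemma \ref{lem2}.

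Assuming $\gamma=\be+k\al\in R$ with $|k|\ge 2$, I would run the following argument to a contradiction. Since $\gamma\in R^{re}$ with $(\al,\gamma)\neq 0$, axiom (3) gives $r_\al(\gamma)\in\{\gamma+\al,\gamma-\al\}=\{\be+(k+1)\al,\be+(k-1)\al\}$. For $|k|\ge 2$ we have $k\pm 1\neq 0$, so by the same norm computation $r_\al(\gamma)$ is again a real root; hence Lemma \ref{lem2}(b) applies and forces $(\al,\gamma^\vee)\in\{2,-2\}$. On the other hand a direct evaluation gives $(\al,\gamma^\vee)=\dfrac{2(\al,\gamma)}{(\gamma,\gamma)}=\dfrac{2(\al,\be)}{2k(\al,\be)}=\dfrac1k$, and for an integer $k$ with $|k|\ge 2$ one has $|1/k|\le 1/2$, so $1/k\notin\{2,-2\}$. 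This contradiction rules out every $|k|\ge 2$, and since $k=0$ is excluded by hypothesis, only $k=\pm1$ survives.

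I do not expect a serious obstacle; the one point requiring care is verifying that the reflected root $r_\al(\gamma)=\be+(k\pm1)\al$ is genuinely real \emph{before} invoking Lemma \ref{lem2}(b), which is precisely where $|k|\ge 2$ (equivalently $k\pm 1\neq 0$) enters. This also explains why the excluded indices are exactly $k=\pm 1$: there $r_\al(\gamma)$ can land on the isotropic root $\be$ (reflecting $\be+\al$ back to $\be$), so Lemma \ref{lem2}(b) simply fails to apply, consistent with $\be\pm\al$ being legitimate roots. As an independent check on the nonuniform part, one may note that $\be+\al$ and $\be+2\al$ cannot both be roots, since axiom (2) would then require $(\be+\al,(\be+2\al)^\vee)=\dfrac{2\cdot 3(\al,\be)}{4(\al,\be)}=\dfrac32\in\Z$; however this partial argument presupposes $\be+\al\in R$, so the reflection argument above is the cleaner and fully uniform route.
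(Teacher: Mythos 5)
Your proof is correct and rests on the same two computations as the paper's own argument: the norm $(\be+k\al,\be+k\al)=2k(\al,\be)\neq 0$, which shows $\be+k\al\in R^{re}$, and the evaluation $(\al,(\be+k\al)^{\vee})=\tfrac{2(\al,\be)}{2k(\al,\be)}=\tfrac{1}{k}$. The only difference is at the final step: the paper concludes directly from the integrality requirement of axiom (2) (once $\be+k\al$ is real, $(\al,(\be+k\al)^{\vee})=1/k$ must lie in $\Z$, forcing $k=\pm1$), whereas you take a detour through $r_{\al}$ and Lemma~\ref{lem2}(b) to pin this quantity to $\{\pm 2\}$ --- valid, and carefully justified, but strictly unnecessary since axiom (2) already yields the contradiction without examining the reflection.
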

	\begin{proof} Since $(\be+k\al,\be+k\al)=2k(\al,\be)\neq 0$ therefore $\be+k\al\in R^{re}$. Then $(\al,(\be+k\al)^{\vee})=\dfrac{2(\al,\be)}{2k(\al,\be)}=\dfrac{1}{k}\in\Z$ only if $k=\pm 1$.
	\end{proof}
	\begin{nt}
		By Lemma \ref{lem3} we have $m_{\be}^\pm(\al)=m_{\al}^{\pm}(\be)\leq 2$, whenever $\al,\be\in R^{im}$ with $(\al,\be)\neq 0$. Also, only one of $\be\pm\al=r_{\al}(\be)$ is the root.
	\end{nt}
    In the following Lemma we have computed the bound for the $\alpha$-string through $\beta.$ 
\begin{lemma}\label{lem5} Let $\al\in R^{im},\be\in R^{re}$ and $(\al,\be)\neq0$. 
		\begin{enumerate}
			\item if $r_{\al}(\be)\in R^{im}$, then $m_{\al}^{\pm}(\be)\leq 2$.
			\item if $r_{\al}(\be)\in R^{re}$, then $m_{\al}^{\pm}(\be)\leq 2$.
		\end{enumerate}
	\end{lemma}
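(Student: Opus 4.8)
The plan is to control the entire $\al$-string through $\be$ by tracking the squared lengths $(\ga,\ga)$ of its members $\ga=\be+k\al$, exploiting that the string is by definition an unbroken run of roots. First I would record two facts valid along the string. Since $\al$ is isotropic, $(\ga,\al)=(\be,\al)+k(\al,\al)=(\be,\al)\neq0$ is independent of $k$; and $(\ga,\ga)=(\be,\be)+2k(\be,\al)$ is a strictly monotonic arithmetic function of $k$ with nonzero common difference $2(\be,\al)$. The crucial input is Lemma \ref{lem2}: whenever $\ga=\be+k\al$ is a \emph{real} root, $(\al,\ga)\neq0$ forces $(\al,\ga^\vee)\in\{\pm1,\pm2\}$, so that $(\ga,\ga)=2(\be,\al)/(\al,\ga^\vee)\in\{\pm(\be,\al),\pm2(\be,\al)\}$. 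Together with the observation that an isotropic member needs $(\ga,\ga)=0$, which monotonicity permits for at most one $k$, this confines the squared length of every root of the string to the finite set $\{0,\pm(\be,\al),\pm2(\be,\al)\}$.

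Writing $c=(\be,\al)$, I would then split along the parity dictated by Lemma \ref{lem2} applied to $\be$ itself: in case (1), $r_\al(\be)\in R^{im}$ gives $(\al,\be^\vee)=\mp1$ and hence $(\be,\be)=\pm2c$, while in case (2), $r_\al(\be)\in R^{re}$ gives $(\al,\be^\vee)=\mp2$ and hence $(\be,\be)=\pm c$. In case (2) every squared length $(\be,\be)+2kc$ is an odd multiple of $c$; it is therefore never $0$, and it lands in $\{\pm c,\pm2c\}$ only for the two consecutive indices solving $(\be,\be)+2kc=\pm c$. Thus the string contains no isotropic root and at most two roots in all; since it contains $\be$, this immediately yields $m_\al^+(\be),m_\al^-(\be)\leq2$, proving part (2).

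In case (1) all squared lengths are even multiples of $c$, so the admissible nonzero values are only $\pm2c$, which, with the single isotropic value $0$, confine the string to at most three consecutive indices $\{k_0-1,k_0,k_0+1\}$ carrying the unique isotropic root at $k_0$. Because $\be$ is real, $k_0\neq0$, whence $k_0=\pm1$; so on one side $\be$ already ends the string (for instance, if $\be+\al\in R^{im}$ then $\be-\al\notin R$), giving $m_\al^-(\be)=1$ or $m_\al^+(\be)=1$ outright. For the remaining side the only root to rule out is $\be+2\al$ or $\be-2\al$. Here I would change tools: putting $\de=r_\al(\be)\in R^{im}$, both $\de$ and $\al$ are isotropic with $(\de,\al)=c\neq0$, so Lemma \ref{lem3} and the note following it apply to the pair $\{\de,\al\}$; only one of $\de+\al,\de-\al$ is a root, and since $\be$ is one of these two and is a root, the other---exactly $\be+2\al$ or $\be-2\al$---is not. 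This gives $m_\al^\pm(\be)\leq2$ in case (1) as well.

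The step I expect to be the main obstacle is this final one in case (1): the length bookkeeping from Lemma \ref{lem2} alone caps the string only at three roots, so the sharp bound genuinely requires the sharper fact that a string spanned by two isotropic roots has length at most two (Lemma \ref{lem3}). Beyond that, the only delicate point is the sign bookkeeping---checking that $\be$ sits at an end of the three-term window and that its isotropic neighbour is precisely $r_\al(\be)$, so that Lemma \ref{lem3} may be invoked for the pair $\{r_\al(\be),\al\}$.
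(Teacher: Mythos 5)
Your proof is correct, and for part (b) it takes a genuinely different --- and tighter --- route than the paper. For part (a) the substance is the same: after the squared-length bookkeeping (not strictly needed there, though it does cleanly give $m_\al^{\mp}(\be)=1$ on the side away from the isotropic neighbour), you reduce, exactly as the paper does, to the isotropic pair $\{r_\al(\be),\al\}$ and invoke Lemma \ref{lem3} together with the assertion in the note following it that only one of $r_\al(\be)\pm\al$ is a root. Be aware that this ``only one'' claim is stated in the paper without proof, and the paper's own argument for (a) leans on it just as yours does, since the bare conclusion of Lemma \ref{lem3} still permits $k=+1$, i.e.\ $\be+2\al$, on one side. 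For part (b) the paper instead assumes $\be+2\al\in R$ and splits on whether $\be+\al$ lies in $R_{\bar 1}$ or $R_{\bar 0}$, running $\be$-strings through $\be+2\al$ and $\al$ and using the root string property to reach contradictions. You replace all of this by the observation that $(\be+k\al,\be+k\al)=(\be,\be)+2k(\al,\be)$ together with Lemma \ref{lem2} forces every root on the string to have squared length in $\{0,\pm(\al,\be),\pm 2(\al,\be)\}$, and that in case (b) these values are all odd multiples of $(\al,\be)$, hence attainable for only two consecutive values of $k$. This is shorter, avoids the parity case split entirely, and yields the stronger conclusion that in case (b) the $\al$-string through $\be$ is exactly $\{\be,\,r_\al(\be)\}$, not merely that $m_\al^{\pm}(\be)\le 2$. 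I see no gap in your argument beyond the shared reliance on the unproved note.
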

	\begin{proof} (a) By Lemma \ref{lem1}, we have $(r_{\al}(\be),\al)\neq0$. If $r_{\al}(\be)\in R^{im}$, then by Lemma \ref{lem3}, we have $r_{\al}(\be)+k\al\in R$ only if $k=\pm1$. But $r_{\al}^2=id$ (\cite{serganova1996generalizations}, Lemma 1.11), we have $r_{\al}(\be)\pm\al=r_{\al}(r_{\al}(\be))=\be$. Hence $\be\pm2\al\notin R$ and so  $m_{\al}^{\pm}(\be)\leq 2$.
		
		(b) Suppose $\be+2\al\in R$ and as $\be+\al\in R$ we have two cases.
		
		Case(i): If $\be+\al\in R_{\bar 1}$, then $2(\be+\al)\in R$ and let $\gamma=\be+2\al$. Consider the $\be$-string through $\gamma$, $\{\gamma-r\be,\ldots,\gamma,\ldots,\gamma+q\be\}$, as $\be\in R^{re}$ which has root string property, we have $r-q=(\gamma,\be^{\vee})$ with $q\geq1$. Since $\gamma-\be=2\al\notin R$ (by Lemma \ref{lem0}) implies that $r=0$ and so $q=-(\gamma,\be^{\vee})=-(\be,\be^{\vee})-2(\al,\be^{\vee})=2$ as $(\al,\be^{\vee})=-2$. Hence $\gamma+2\be=2\al+3\be\in R$.
		
		Now, $(\be,\al)\neq 0$ and $3(\al+\be)\notin R$ implies that $(2\al+3\be)-\al=\al+3\be\in R$, consider the $\be$-string through $\al$, $\{\al,\al+\be,\al+2\be,\al+3\be,\ldots,\al+q\be\}$ and $q=-(\al,\be^{\vee})\geq 3$. Which contradicts $(\al,\be^{\vee})=-2$ and so $\be+\al\notin R_{\bar 1}$.
		
		Case(ii): If $\be+\al\in R_{\bar 0}$, then the $\be$-string through $\be+2\al$ is just $\{\be+2\al\}$, we have $(\be+2\al,\be^{\vee})=0$ implies that $(\be+2\al,\be)=0$ and so $(\be+\al,\be+\al)=(\be,\be)+2(\be,\al)=0$. Which is contradiction to $\be+\al\in R^{re}$. So that $\be+\al\notin R_{\bar 0}$, hence $\be+2\al\notin R$.
	\end{proof}
	
	\begin{lemma}\label{lem6} Let $\al\in R$ and $i,j\in I,i\neq j$. Assume that $\al\notin\{\pm\al_i,\pm\al_j\}$ and $\al+\al_i-\al_j\in R$. Then $\al+\al_i\in R$ if and only if $\al-\al_j\in R$. (Note that the assumption implies that $\al+\al_i\neq\al_j$ and $\al-\al_j\neq-\al_i$.)
	\end{lemma}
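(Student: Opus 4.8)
Set $\gamma:=\al+\al_i-\al_j\in R$. Since $\gamma+\al_j=\al+\al_i$ and $\gamma-\al_i=\al-\al_j$, the claim is exactly that $\gamma+\al_j\in R$ if and only if $\gamma-\al_i\in R$. The plan is to prove only the forward implication $\al+\al_i\in R\Rightarrow\al-\al_j\in R$. The converse then comes for free from $R=-R$: applying the forward implication to $-\al$ with the two indices interchanged is legitimate, because $-\al\in R$, $(-\al)+\al_j-\al_i=-\gamma\in R$ and $-\al\notin\{\pm\al_i,\pm\al_j\}$, and its conclusion $(-\al)+\al_j\in R\Rightarrow(-\al)-\al_i\in R$ negates to $\al-\al_j\in R\Rightarrow\al+\al_i\in R$. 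Throughout, the hypothesis $\al\notin\{\pm\al_i,\pm\al_j\}$ is kept in reserve to exclude the degenerate coincidences $\al+\al_i=\al_j$ and $\al-\al_j=-\al_i$ noted in the statement.

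Two facts will drive the argument. First, a base property: for distinct simple roots, if $\al_j\in R^{re}$ then $(\al_i,\al_j^{\vee})\le0$ (and symmetrically $(\al_j,\al_i^{\vee})\le0$ whenever $\al_i\in R^{re}$), since $(\al_i,\al_j^{\vee})>0$ would make $r_{\al_j}(\al_i)=\al_i-(\al_i,\al_j^{\vee})\al_j$ a root with coefficients of opposite signs, impossible for an element written in a base. Second, the root string property for $\eta\in R^{re}$, namely $(\be,\eta^{\vee})=m_{\eta}^-(\be)-m_{\eta}^+(\be)$, yields the sufficient conditions $(\be,\eta^{\vee})>0\Rightarrow\be-\eta\in R$ and $(\be,\eta^{\vee})<0\Rightarrow\be+\eta\in R$.

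Assume first $\al_i,\al_j\in R^{re}$ and $\al+\al_i\in R$. If $(\al,\al_j^{\vee})>0$ then $\al-\al_j\in R$ immediately, so suppose $(\al,\al_j^{\vee})\le0$. If also $(\al,\al_i^{\vee})\ge-1$, then the base property gives
\[
(\gamma,\al_i^{\vee})=(\al,\al_i^{\vee})+2-(\al_j,\al_i^{\vee})\ge-1+2-0=1>0,
\]
so $\gamma-\al_i=\al-\al_j\in R$, as wanted. This leaves only the configuration $(\al,\al_j^{\vee})\le0$ and $(\al,\al_i^{\vee})\le-2$; since $\al+\al_i\in R$ forces $m_{\al_i}^+(\al)\ge2$, the inequality $(\al,\al_i^{\vee})=m_{\al_i}^-(\al)-m_{\al_i}^+(\al)\le-2$ forces $m_{\al_i}^+(\al)\ge3$, i.e. $\al+2\al_i\in R$. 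This residual configuration is then disposed of by examining the finitely many roots $\al+m\al_i+n\al_j$ reachable through the real $\al_i$- and $\al_j$-strings: the GRS axioms bound the length of a real root string, and combined with $\gamma\in R$ and the exclusion $\al\notin\{\pm\al_i,\pm\al_j\}$ one checks that this case either cannot occur or yields $\al-\al_j$ directly, so the two sufficient conditions exhaust the real case.

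It remains to treat the case in which one of $\al_i,\al_j$ is isotropic; for the distinguished bases in force at most one simple root is isotropic, so they are never both isotropic (were that to happen, Lemma \ref{lem3} and the note following it would apply). When the relevant root is isotropic the coarse Cartan-integer estimates are unavailable, because $r_{\al_i}$ (resp.\ $r_{\al_j}$) only shifts a root by $\pm$ that root or fixes it; instead one substitutes the sharp string bounds of Lemma \ref{lem2}, Corollary \ref{cor1} and Lemma \ref{lem5}, which confine the $\al_i$- (resp.\ $\al_j$-) strings through $\al$ and $\gamma$ to length at most $2$ or $3$ and fix the pertinent inner products up to sign; the same string bookkeeping then produces $\al-\al_j\in R$. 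I expect this isotropic analysis, together with the long-string subcase $(\al,\al_i^{\vee})\le-2$ above, to be the main obstacle: here the naive reflection no longer carries the known root $\gamma$ onto the desired root in a single step, and one must combine the specialized super root-string results of Section \ref{sec2} with the non-degeneracy hypothesis to eliminate the remaining possibilities.
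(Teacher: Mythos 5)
Your reduction to the single implication $\al+\al_i\in R\Rightarrow\al-\al_j\in R$ via $R=-R$ is valid, and the easy subcases of the all-real situation are handled correctly: when $\al_i,\al_j\in R^{re}$ the base property $(\al_j,\al_i^{\vee})\le 0$ (which holds because $\al_j-\al_i\notin R$ for distinct simple roots, so $m_{\al_i}^-(\al_j)=1$) together with the root string identity does give $\ga-\al_i=\al-\al_j\in R$ whenever $(\al,\al_j^{\vee})\le 0$ and $(\al,\al_i^{\vee})\ge-1$. But the proof stops exactly where the lemma becomes hard. The residual real configuration $(\al,\al_i^{\vee})\le-2$ is dismissed with ``one checks that this case either cannot occur or yields $\al-\al_j$ directly,'' and the entire case where one of $\al_i,\al_j$ is isotropic is deferred with ``the same string bookkeeping then produces $\al-\al_j\in R$'' and an admission that you expect it to be the main obstacle. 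That is not a proof of those cases, and they are precisely the cases where the super phenomena live: when $\al_j\in R^{im}$, knowing $(\ga,\al_j)\neq0$ only tells you that \emph{one} of $\ga\pm\al_j$ is a root (axiom (3) and Lemma \ref{lem3}), so $\ga+\al_j=\al+\al_i\in R$ gives no leverage on $\ga-\al_i$ without further input; and the classical trick of expanding $(\ga,\ga)>0$ to force one of the three inner products to be positive is unavailable because the form is indefinite and $\ga$ may be isotropic. Citing Lemma \ref{lem2}, Corollary \ref{cor1} and Lemma \ref{lem5} as the tools you would use does not substitute for running the argument.

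For comparison, the paper does not attempt a uniform argument at all: its proof of Lemma \ref{lem6} is an exhaustive verification, root system by root system, read off from the quiver diagrams of Section \ref{sec5} (every ``complete square'' in those diagrams instantiates the lemma). Your strategy, if completed, would be genuinely better — classification-free and more conceptual — but as written it proves strictly less than the paper does: the subcases you actually close are the ones that already follow from the classical root-string argument, while the isotropic and long-string cases, which are the reason the paper falls back on the classification, remain open. To repair this you would need either to carry out the string bookkeeping explicitly in the isotropic cases (distinguishing $r_{\al_j}(\ga)=\ga+\al_j$ from $\ga-\al_j$ using Lemma \ref{lem2} and the parity of $\ga$), or to concede the appeal to the classification as the paper does.
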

	\begin{proof}
		We prove this lemma by using a quiver diagram (refer, sec. \ref{sec5})  for each of the GRS. Here, we express only the positive root system in terms of the distinguished base that has been given \cite{frappat1989structure,iohara2001central,kac1977lie}. We arrange each root in a positive root system by its height (negative roots also satisfy the diagram), and we observe that each complete square from the quiver diagram gives the required result.
		\begin{center}
			{
				\begin{tikzpicture}[node distance=2cm and 2cm, >=Stealth]
					\tikzset{edge label/.style={font=\tiny}}
					
					\node (a1) at (0,0) {$\al$};
					\node (a2) [right=of a1] {$\al+\al_i$};
					
					\draw[->] (a1) -- (a2) node[midway, above,font=\tiny]{$\al_i$};
					
					\node (b1) [below=of a1] {$\al-\al_j$};
					\node (b2) [below=of a2] {$\al+\al_i-\al_j$};
										
					\draw[->] (b1) -- (b2) node[midway, below, font=\tiny]{$\al_i$};
										
					\draw[->] (a1) -- (b1) node[midway, left,font=\tiny]{$-\al_j$};
					\draw[->] (a2) -- (b2) node[midway, left,font=\tiny]{$-\al_j$};
										
					\draw[->] (a1) -- (b2) node[midway, rotate=320, below,font=\tiny]{$\al_i-\al_j$};
				
				\end{tikzpicture}
			}	
		\end{center}
			\end{proof}
			
	\begin{cor}
	Let $\al\in R^{re}$ and $\al_i,\al_j\in R^{re}$ in (\ref{lem6}), then not both $\al+\al_i$ and $\al-\al_j$ are in $R^{im}$.
	\end{cor}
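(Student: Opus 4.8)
The plan is to suppose, for contradiction, that both $\al+\al_i$ and $\al-\al_j$ lie in $R^{im}$, and to violate the standing hypothesis $\al+\al_i-\al_j\in R$ of Lemma~\ref{lem6}. The first step is a parity count. Since every isotropic root is odd, $R^{im}\subseteq R_{\bar 1}$, so $\overline{\al+\al_i}=\overline{\al-\al_j}=\bar 1$; by the additivity $\overline{\al\pm\al_k}=\overline{\al}+\overline{\al_k}$ this reads $\overline{\al}+\overline{\al_i}=\bar 1$ and $\overline{\al}+\overline{\al_j}=\bar 1$, so in each pair exactly one root is odd. Now $\al_i,\al_j$ are members of the distinguished base $\Pi$ and are real by hypothesis; for every irreducible GRS on our list the distinguished base has a single odd simple root and that root is isotropic (this is exactly what fails for the excluded system $B(0,n)$, whose odd simple root is real). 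Hence no real simple root is odd, $\al_i,\al_j\in R_{\bar 0}$, and the parity identities force $\al$ itself to be odd. Thus $\al$ is an odd real root; in particular $2\al\in R$, and such roots occur only in the systems $B(m,n)$ $(m\ge 1)$ and $G(3)$.

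Next I would fix the geometry of $\al$ and $\al_i$. Put $a=(\al,\al)$ and $b=(\al_i,\al_i)$; since $\al,\al_i\in R^{re}$ the Cartan numbers $(\al,\al_i^{\vee})$ and $(\al_i,\al^{\vee})$ are integers, and expanding $(\al+\al_i,\al+\al_i)=0$ in two ways gives $a=-\big(1+(\al,\al_i^{\vee})\big)b$ and $b=-\big(1+(\al_i,\al^{\vee})\big)a$, whence $\big(1+(\al,\al_i^{\vee})\big)\big(1+(\al_i,\al^{\vee})\big)=1$. Over $\Z$ this leaves only $(\al,\al_i^{\vee})=(\al_i,\al^{\vee})=0$, giving $(\al,\al_i)=0$ and $b=-a$, or $(\al,\al_i^{\vee})=(\al_i,\al^{\vee})=-2$, giving $a=b$. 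The second option demands that the odd root $\al$ and the even root $\al_i$ have equal square-length, which never happens in $B(m,n)$ or $G(3)$, since there an odd real root has square-length different from that of every even root. So we are in the first option: $\al\perp\al_i$ with $(\al_i,\al_i)=-(\al,\al)$, and symmetrically $\al\perp\al_j$ with $(\al_j,\al_j)=-(\al,\al)$.

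Finally I would reach the contradiction on the two diagrams where odd real roots live. In each, orthogonality to $\al$ together with $(\al_i,\al_i)=-(\al,\al)$ and the requirement $\al+\al_i\in R^{im}$ singles out $\al_i$ as \emph{the} short even simple root of the diagram (the node $\epsilon_m$ in $B(m,n)$, and the unique short even simple root of the $G_2$ part in $G(3)$); the identical conditions force $\al_j$ to be that same root, so $\al_i=\al_j$, contradicting $i\neq j$. Equivalently, this can be read off the quiver diagram exactly as in Lemma~\ref{lem6}: once $\al+\al_i\in R^{im}$, the putative square fails to close because $\al+\al_i-\al_j$ is then not a root, contradicting the hypothesis. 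I expect the genuine work to be precisely this last, diagram-dependent step: the parity and integrality reductions are formal and uniform, but --- as already for Lemma~\ref{lem6} --- ruling out the offending square is verified on the (two) explicit positive systems carrying odd real roots rather than deduced purely combinatorially.
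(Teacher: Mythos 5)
Your proof is correct, but it takes a genuinely different route from the paper's. The paper argues uniformly with root strings: assuming both $\al+\al_i$ and $\al-\al_j$ are isotropic, it first deduces $\al+\al_i+\al_j\notin R$ from $\al+\al_i-\al_j\in R$ (the $\al_j$-string through an isotropic root extends only one way, by Corollary \ref{cor1}), then applies Lemma \ref{lem6} to the root $\al+\al_j$ to conclude $\al+\al_j\notin R$, and finally reaches a contradiction because $(\al-\al_j)+\al_j=\al\in R^{re}$ forces $m_j^+(\al-\al_j)=3$, i.e.\ $\al+\al_j\in R$ after all. Your argument instead leans on the classification: parity together with the fact that each distinguished base has a unique odd simple root, which is isotropic, forces $\al\in R_{\bar{1}}^{re}$ and hence restricts attention to $B(m,n)$ and $G(3)$; the integrality identity $\bigl(1+(\al,\al_i^{\vee})\bigr)\bigl(1+(\al_i,\al^{\vee})\bigr)=1$ then leaves only $\al\perp\al_i$ with $(\al_i,\al_i)=-(\al,\al)$ (the equal-length alternative being impossible in those two systems), and likewise for $\al_j$; since in each of $B(m,n)$ and $G(3)$ exactly one simple root has square length $-(\al,\al)$, you get $\al_i=\al_j$, contradicting $i\neq j$. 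The paper's version buys uniformity: apart from its reliance on Lemma \ref{lem6} it stays inside the root-string axiomatics, and it genuinely uses the hypothesis $\al+\al_i-\al_j\in R$. Your version buys a strictly stronger statement --- you never use $\al+\al_i-\al_j\in R$, only $i\neq j$ --- at the cost of invoking classification data (parities of the distinguished bases, square lengths in $B(m,n)$ and $G(3)$) of exactly the kind the paper itself appeals to in the corollary that follows; your two diagram-dependent steps do check out against the data recorded in Section \ref{sec5}.
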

	\begin{proof}
	Suppose that both $\al+\al_i$ and $\al-\al_j$ are in $R^{im}$, then $\al+\al_i+\al_j\notin R$ as $\al+\al_i-\al_j\in R$. Also, $\al+\al_j\notin R$ otherwise by above lemma, since $(\al+\al_j)+\al_i-\al_j\in R$ and $(\al+\al_j)-\al_j\in R$ implies $(\al+\al_j)+\al_i\in R$ which contradicts that $\al+\al_j+\al_i\notin R$. Now, $\al-\al_j\in R^{im}$ and $(\al-\al_j)+\al_j\in R^{re}$ then by (cor. \ref{cor1}) we have $m_j^+(\al-\al_j)=3$ implies that $\al+\al_j\in R$ becomes a contradiction. By similar arguments, we arrive a contradiction to $\al-\al_i\notin R$. Hence, not both $\al+\al_i$ and $\al-\al_j$ are in $R^{im}$.
	\end{proof}
	\begin{cor}
	Let $\al\in R_{\bar{1}}$ and $\al_i,\al_j\in R^{re}$ in (\ref{lem6}), then not both $\al+\al_i\text{ and }\al-\al_j$ are in $R^{re}$.
	\end{cor}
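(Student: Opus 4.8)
I would argue by contradiction, following the template of the preceding corollary but with the roles of the isotropic and non-isotropic roots interchanged. Assume both $\al+\al_i$ and $\al-\al_j$ lie in $R^{re}$; by Lemma~\ref{lem6} they are then both roots, so the four elements $\al,\ \al+\al_i,\ \al-\al_j,\ \al+\al_i-\al_j$ form a complete square. Since $\al\in R_{\bar1}$, either $\al\in R^{im}$ or $\al\in R^{re}$ with $2\al\in R$; I would treat the isotropic case as the principal one and reduce the real-odd case to it by passing to the even root $2\al$ and tracking the parities of $\al_i,\al_j$.

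Assume $\al\in R^{im}$. After checking that $(\al,\al_i)\neq0$ and $(\al,\al_j)\neq0$, I would apply Corollary~\ref{cor1} and Note~\ref{nt1} to the $\al_i$- and $\al_j$-strings through $\al$: because $\al+\al_i\in R^{re}$ the forward string has $m^+_{\al_i}(\al)=3$, giving $\al+2\al_i\in R^{im}$ and $(\al,\al_i^\vee)=-2$, whence $m^-_{\al_i}(\al)=1$ and $\al-\al_i\notin R$; symmetrically $\al-2\al_j\in R^{im}$ and $\al+\al_j\notin R$. Thus each non-isotropic corner $\al+\al_i$ and $\al-\al_j$ is an interior point of an isotropic–real–isotropic string in its own simple direction.

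The contradiction should then be produced by applying the preceding corollary to the non-isotropic root $\delta=\al+\al_i$ (or, symmetrically, to $\al-\al_j$). Its two $\al_i$-neighbours $\delta-\al_i=\al$ and $\delta+\al_i=\al+2\al_i$ are isotropic, so it suffices to exhibit a second isotropic neighbour of $\delta$ in a transverse simple direction, namely to force the diagonal $\gamma:=\al+\al_i-\al_j$ into $R^{im}$ while keeping $\delta+\al_i-\al_j\in R$, which is precisely the forbidden configuration of the preceding corollary. I would therefore split on the type of $\gamma$: when $\gamma\in R^{im}$ one analyses its $\al_i$- and $\al_j$-strings via Corollary~\ref{cor1} to locate one of the excluded roots $\al-\al_i$ or $\al+\al_j$ being forced back into $R$; when $\gamma\in R^{re}$ one runs the same argument at the corner $\al-\al_j$.

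The hard part is the bookkeeping of which corners of the square are isotropic and which are not. In contrast with the preceding corollary, where the isotropic roots sat in the interior of their strings and were thereby forced to extend, here the isotropic roots $\al$ and $\al\pm2\al_\bullet$ occupy the ends of their strings, so the naive string extension closes up consistently and yields no contradiction on its own; the contradiction has to be extracted by combining the two transverse strings across the square together with Lemma~\ref{lem6}. The genuinely delicate point is any orthogonality among $\al,\al_i,\al_j$ (in particular $(\al_i,\al_j)=0$), where the strings are symmetric and can be long, and one is forced to invoke the finiteness of $R$ to terminate them; this, together with the reduction of the real-odd case of $\al$, is where I expect the main effort to lie.
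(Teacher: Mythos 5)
Your proposal departs from the paper's route and, as written, does not close. The paper's argument is a reduction to the classification: since the real simple roots of the distinguished base are even, $\al+\al_i\in R_{\bar 1}$, so the hypothesis $\al+\al_i\in R^{re}$ forces $R_{\bar 1}^{re}\neq\emptyset$, which by Serganova's classification happens only for $B(m,n)$ and $G(3)$; in those two systems one then verifies directly that the configuration of Lemma \ref{lem6} with both $\al+\al_i$ and $\al-\al_j$ real never occurs (in $B(m,n)$, for $\al\in R^{im}$ the condition $\al+\al_i\in R^{re}$ forces $\al_i=\ep_m$ and likewise $\al_j=\ep_m$, contradicting $i\neq j$; for $\al=\pm\del_a$ the unique candidate pair has $\al+\al_i-\al_j=\del_{a-1}-\del_a+\del_{a+1}\notin R$). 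You never make this parity observation and instead attempt a purely axiomatic argument, which would have to do much more work than you indicate.

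The concrete gap is that your central step is internally inconsistent. To apply the preceding corollary at $\delta=\al+\al_i$ you need simultaneously $\delta-\al_j=\gamma\in R^{im}$ and $\delta+\al_i-\al_j=\gamma+\al_i\in R$; but once $\gamma\in R^{im}$ with $\gamma-\al_i=\al-\al_j\in R^{re}$, Lemma \ref{lem2} and Corollary \ref{cor1} give $(\gamma,\al_i^{\vee})=2$ and $m_{\al_i}^-(\gamma)=3$, hence $m_{\al_i}^+(\gamma)=1$, i.e.\ $\gamma+\al_i\notin R$ (and symmetrically $\gamma-\al_j\notin R$ at the other corner), so the ``forbidden configuration'' you want to exhibit can never materialize. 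Beyond this, the non-orthogonality $(\al,\al_i)\neq0$, $(\al,\al_j)\neq0$ is only asserted; the ``reduction of the real-odd case by passing to $2\al$'' is not explained, even though it is precisely for $\al\in R_{\bar 1}^{re}$ (e.g.\ $\al=\del_a$ in $B(m,n)$) that both $\al+\al_i$ and $\al-\al_j$ can be real, the statement being saved only because the Lemma \ref{lem6} hypothesis $\al+\al_i-\al_j\in R$ fails; and you concede yourself that the string extensions ``yield no contradiction on their own.'' What you have is a strategy sketch, not a proof; note also that Lemma \ref{lem6} is itself established in the paper by inspection of the quiver diagrams, so an argument from the GRS axioms alone would need to be substantially more complete than what you outline.
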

	\begin{proof}
	By explicit verification, the only irreducible GRS with non-empty $R_{\bar{1}}^{re}$ are $B(m,n)$ and $G(3)$. Hence, the result is clear in other root systems. In $G(3)$, upto $W$-equivalence there is unique system of simple roots $\{\al_1=\del+\ep_3,\al_2=\ep_1,\al_3=\ep_2-\ep_1\}$ and it is easy to check that there is no $\al\in R_{\bar{1}}$ such that $\al\pm\al_2\mp\al_3\in R$. Also by observations the result holds in case of $B(m,n)$.
	\end{proof}
	\begin{lemma}\label{lem12}
		 Let $\al\in R_{\bar{1}}$ and $\al_i,\al_j\in R^{re}$ in (\ref{lem6}), then $m_i^-(\al)m_j^+(\al+\al_i)=m_j^+(\al)m_i^-(\al-\al_j)$.
	\end{lemma}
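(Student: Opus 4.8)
The plan is to read this identity as a commuting-square relation among root strings in the two real directions $\al_i$ and $\al_j$, attached to the square with vertices $\al,\ \al+\al_i,\ \al-\al_j,\ \al+\al_i-\al_j$, and to prove it by first reducing to a non-degenerate configuration and then evaluating the four string lengths. First I would dispose of the degenerate case: if $\al+\al_i\notin R$, then by Lemma \ref{lem6} also $\al-\al_j\notin R$; the factors $m_j^+(\al+\al_i)$ and $m_i^-(\al-\al_j)$ are then strings through non-roots, both sides vanish, and the identity is trivial. Hence I may assume that all four vertices lie in $R$.

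Next, since $\al_i,\al_j\in R^{re}$, I would invoke the root string property $(\be,\al_i^{\vee})=m_i^-(\be)-m_i^+(\be)$ (and its $\al_j$-analogue) to rewrite each string length in terms of a Cartan integer, using $\al+\al_i-\al_j\in R$ to tie together the $\al_i$- and $\al_j$-strings through the four vertices. The two corollaries following Lemma \ref{lem6} then restrict the possibilities sharply: for $\al\in R_{\bar 1}$ the vertices $\al+\al_i$ and $\al-\al_j$ cannot both be real, and when $\al$ is itself real they cannot both be isotropic, leaving only a handful of mixed isotropic/real configurations. In each of these Corollary \ref{cor1} pins the relevant string lengths down to the set $\{1,2,3\}$—for instance, when $\al,\al+\al_i\in R^{im}$ the identity $(\al+\al_i,\al+\al_i)=0$ gives $(\al,\al_i^{\vee})=-1$, whence $m_i^-(\al)=m_i^+(\al)-1$—and the product identity is then checked by direct multiplication.

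The hard part will be the bookkeeping rather than any single estimate, for two reasons. First, the factors $m_j^+(\al+\al_i)$ and $m_i^-(\al-\al_j)$ are governed by roots lying outside the elementary square, such as $\al+\al_i+\al_j$ and $\al-\al_i-\al_j$, so one must control the entire rank-two pattern of roots $\al+a\al_i+b\al_j$ and not merely the four vertices, tracking throughout which neighbors are isotropic and which are real. Second, odd real roots $\al$ occur only for $B(m,n)$ and $G(3)$, where the $\al_i$- and $\al_j$-strings are real–real and no longer covered by Corollary \ref{cor1}; there I would revert to the explicit distinguished bases and the quiver diagrams of Lemma \ref{lem6}, verifying the identity square by square exactly as in its proof. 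Collecting all cases yields $m_i^-(\al)\,m_j^+(\al+\al_i)=m_j^+(\al)\,m_i^-(\al-\al_j)$ in general.
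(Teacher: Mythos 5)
Your skeleton for the main case $\al\in R^{im}$ --- reduce to the non-degenerate square, sort the four vertices by isotropy using the two corollaries after Lemma \ref{lem6}, and evaluate string lengths via Corollary \ref{cor1} and the root string property --- coincides with the paper's. The gap is that the decisive step is only named, not performed. When, say, $\al,\al-\al_j\in R^{im}$ and $\al+\al_i\in R^{re}$, the identity reduces to proving $m_j^+(\al+\al_i)=1$, i.e.\ $\al+\al_i+\al_j\notin R$; Corollary \ref{cor1} is silent here because $\al+\al_i$ is real, and the root string property applied to the four vertices does not decide membership of $\al+\al_i+\al_j$. The paper's mechanism is concrete: Note \ref{nt1} gives $\al+2\al_i\in R^{im}$, and assuming $\al+\al_i+\al_j\in R$ one evaluates the reflection identity $(r_{\al_i}(\al),\al_j^{\vee})=(\al,\al_j^{\vee})-(\al,\al_i^{\vee})(\al_i,\al_j^{\vee})$ with $(\al,\al_i^{\vee})=-2$ and $(\al+2\al_i,\al_j^{\vee})=-1$ to force $\al_i+\al_j\in R$, contradicting the configuration. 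Your proposal flags the need to \emph{control the entire rank-two pattern} but supplies no tool for doing so, and \emph{direct multiplication} after Corollary \ref{cor1} cannot close this.

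Second, your treatment of $\al\in R_{\bar{1}}^{re}$ misreads the geometry: by the two corollaries preceding the statement, exactly one of $\al+\al_i$, $\al-\al_j$ is isotropic, so the relevant strings are not real--real and Corollary \ref{cor1} does apply, namely to the isotropic vertex; this is exactly how the paper argues that case uniformly (e.g.\ $\al+\al_i\in R^{im}$ with $(\al+\al_i)-\al_i\in R^{re}$ gives $m_i^-(\al+\al_i)=3$, hence $\al-\al_i\in R$, and so on). Your fallback of verifying the identity square by square against the quiver diagrams for $B(m,n)$ and $G(3)$ would in principle settle that case, and is in the spirit of how the paper proves Lemmas \ref{lem6}, \ref{lem13} and \ref{lem14}, but it is a different and less informative route than the one taken here, and it does not repair the gap in the isotropic case described above.
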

	\begin{proof}
		If $\al\in R^{im}$, then $m_i^-(\al)=m_j^+(\al)=1$ and it is enough to show $m_i^-(\al-\al_j)=m_j^+(\al+\al_i)$. Suppose both $\al+\al_i\text{ and }\al-\al_j$ are in $R^{im}$, then $m_i^-(\al-\al_j)=m_j^+(\al+\al_i)=1$ as $\al-\al_j+\al_i\in R$. If one of $\al+\al_i\text{ or }\al-\al_j$ (take $\al+\al_i$) is in $R^{re}$, then by (cor. \ref{cor1}, note \ref{nt1}) we have $\al+2\al_i\in R^{im}$ and  $m_i^-(\al-\al_j)=1$ as $\al-\al_j\in R^{im}$. Hence we have to show $m_j^+(\al+\al_i)=1$, if not, that is $\al+\al_i+\al_j\in R$ (take it as $\be$). Then by $\be+\al_i-\al_j=\al+2\al_i\in R$ and $\be-\al_j=\al+\al_i\in R$ we have $\be+\al_i=\al+\al_j+2\al_i\in R$. 
		
		Note that $r_{\al_i}(\al-\al_j)=\al-\al_j+\al_i\in R^{im}\text{ as }r_{\al_i}\text{ preserves }(~,~)$, let $\ga=\al+\al_i-\al_j\in R^{im}$ and $\ga+\al_j=\al+\al_i\in R^{re}$, again by (cor. \ref{cor1}, note \ref{nt1}) we have $\ga+2\al_j=\be\in R^{im}$. We have all the thinks to proceed further to the equation, $$(r_{\al_i}(\al),\al_j^{\vee})=(\al,\al_j^{\vee})-(\al,\al_i^{\vee})(\al_i,\al_j^{\vee}).$$ 
		Since $(\al,\al_j^{\vee})=1$, $(\al,\al_i^{\vee})=-2$ and $(\al_i,\al_j^{\vee})=1-m_j^+(\al_i)$. Also $r_{\al_i}(\al)=\al+2\al_i$ and so $(\al+2\al_i,\al_j^{\vee})=m_j^-(\al+2\al_i)-m_j^+(\al+2\al_i)=-1$ as $\al+2\al_i\in R^{im}$. Then by the equation $m_j^+(\al_i)=2$  is a contradiction to $\al_i+\al_j\notin R$, hence we have $m_j^+(\al+\al_i)=1$ and the relation holds. Similarly, if we choose $\al-\al_j\in R^{re}$ (and $\al+\al_i\in R^{im}$) by following the same arguments  we have the result.
		
		Now, if $\al\in R_{\bar{1}}^{re}$ then one of $\{\al+\al_i,\al-\al_j\}$ is in $R^{re}$ and other one is in $R^{im}$. Let $\al+\al_i\in R^{im}$, then $\al+\al_i+\al_j\notin R$ which implies $\al+\al_j\notin R$. Thus, we have $m_j^+(\al)=1$ and $m_j^+(\al+\al_i)=1$. Also, $\al+\al_i\in R^{im}$ and $(\al+\al_i)-\al_i\in R^{re}$ implies that $m_i^-(\al+\al_i)=3$ and so $m_i^-(\al)=2$ (that is, $\al-\al_i\in R$). Now $(\al-\al_i)+\al_i-\al_j\in R$ and $(\al-\al_i)+\al_i\in R$ implies that $(\al-\al_i)-\al_j\in R$ (that is, $m_i^-(\al-\al_j)\geq2$). Since $(\al-\al_j)+\al_i\in R$, $\al-\al_j\in R^{re}$ and $(\al-\al_j)-\al_i\in R$, it is enough to show that $\al-\al_j+\al_i\in R^{im}$. Suppose not, since $\al+\al_i\in R^{im}$ implies that $\al+\al_i-2\al_j\in R$ and so $\{\al-2\al_j,\al-2\al_j-\al_i\}\subset R$. Since $\al-\al_j\in R_{\bar{1}}^{re}$ and $\al-\al_j+\al_i\in R^{re}$ implies that $\al-2\al_j\in R^{im}$. But we have $\{(\al-2\al_j)-\al_i,(\al-2\al_j)+\al_i\}\subset R$, which is contradiction and so $\al-\al_j+\al_i\in R^{im}$ implies that $m_i^-(\al-\al_j)=2$. Hence the relation holds by the similar arguments if we choose $\al-\al_j\in R^{im}$.
	\end{proof}
	\begin{lemma}\label{lem13}
		Let $\al\in R^{re},\al_i\in R^{re}\text{ and }\al_j\in R^{im}$ in (\ref{lem6}), then $m_i^-(\al-\al_j)=m_i^-(\al)$.
	\end{lemma}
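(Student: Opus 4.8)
\emph{Plan.} The plan is to compare the two downward $\al_i$-strings---the one through $\al$ and the one through $\al-\al_j$---using that $\al_i\in R^{re}$, so that both the root-string property $(\be,\al_i^\vee)=m_i^-(\be)-m_i^+(\be)$ and the real reflection $r_{\al_i}$ are available, while $\al_j\in R^{im}$ enters only through the shift. Since $m_i^-(\al-\al_j)$ presupposes $\al-\al_j\in R$, I first record that this is the relevant case: Lemma \ref{lem6} with base $\al$ turns the standing hypothesis $\al+\al_i-\al_j\in R$ into the equivalence $\al-\al_j\in R\Leftrightarrow\al+\al_i\in R$, so I may assume $\al-\al_j\in R$. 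Write $m:=m_i^-(\al)$, so that $\al-k\al_i\in R$ for $0\le k\le m-1$ while $\al-m\al_i\notin R$.

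First I would prove $m_i^-(\al-\al_j)\ge m$ by pushing Lemma \ref{lem6} down the string: the claim is that $\al-\al_j-k\al_i\in R$ for $0\le k\le m-1$, by induction on $k$, the case $k=0$ being the reduction above. For the step, apply Lemma \ref{lem6} with $\al$ replaced by $\be:=\al-k\al_i$, which lies in $R$ because $k\le m-1$: the required hypothesis $\be+\al_i-\al_j=\al-(k-1)\al_i-\al_j\in R$ holds by induction, the lemma gives $\be+\al_i\in R\Leftrightarrow\be-\al_j\in R$, and since $\be+\al_i=\al-(k-1)\al_i\in R$ we obtain $\be-\al_j=\al-\al_j-k\al_i\in R$. (As in Lemma \ref{lem6}, the finitely many degenerate positions $\be\in\{\pm\al_i,\pm\al_j\}$ are excluded by the running hypotheses and checked separately.)

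Next I would convert the problem through the root-string property. With $c:=(\al_j,\al_i^\vee)$ and $p:=m_i^+(\al)$, linearity in the first slot gives $(\al-\al_j,\al_i^\vee)=(\al,\al_i^\vee)-c=(m-p)-c$, hence $m_i^-(\al-\al_j)-m_i^+(\al-\al_j)=(m-p)-c$. Combined with the bound $m_i^-(\al-\al_j)\ge m$ just obtained, this already forces the companion bound $m_i^+(\al-\al_j)\ge p+c$ on the upward extent. The whole statement therefore reduces to the \emph{reverse} inequality $m_i^-(\al-\al_j)\le m$; equivalently, that translating by the isotropic root $\al_j$ does not lengthen the real $\al_i$-string at its lower end, i.e.\ $\al-\al_j-m\al_i\notin R$.

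The hard part will be precisely this upper bound, which cannot be extracted from Lemma \ref{lem6}: that lemma only ever manufactures a root on the shifted string from a root on the original one, and never certifies that a root is \emph{absent}. To handle it I would first pin down the small integer $c$ and the shape of the $\al_i$-string through $\al_j$ from the isotropic--real analysis already in place: Corollary \ref{cor1} computes $m_i^{\pm}(\al_j)$ (and hence $c$) according to whether $\al_j\pm\al_i$ is real or isotropic, while Note \ref{nt1} records which sign actually occurs. Writing $m_i^-(\al-\al_j)=m+t$ and $m_i^+(\al-\al_j)=p+c+t$ with $t\ge0$ (as dictated by the two lower bounds and the exact relation), it then remains to show $t=0$, i.e.\ to cap the length of the shifted string. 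I expect the inner-product bookkeeping, together with the non-degeneracy of $(\cdot,\cdot)$, to settle every system with $R_{\bar 1}^{re}=\emptyset$; for the only two irreducible systems carrying odd real roots, $B(m,n)$ and $G(3)$, I would dispatch the finitely many remaining configurations by the explicit check on the distinguished positive system already used for the neighbouring corollaries.
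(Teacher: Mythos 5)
Your lower bound is sound and is actually a nicer mechanism than anything in the paper: iterating Lemma \ref{lem6} down the $\al_i$-string through $\al$ does give $\al-\al_j-k\al_i\in R$ for $0\le k\le m_i^-(\al)-1$, hence $m_i^-(\al-\al_j)\ge m_i^-(\al)$, and your bookkeeping with the root-string identity for the real root $\al_i$ is correct (modulo the degenerate positions $\al-k\al_i\in\{\pm\al_i,\pm\al_j\}$, which you defer but do not actually rule out). You also correctly diagnose that Lemma \ref{lem6} can never certify the \emph{absence} of a root, so the reverse inequality needs new input. The problem is that you then stop: ``I expect the inner-product bookkeeping\dots to settle every system with $R_{\bar 1}^{re}=\emptyset$'' and ``I would dispatch the finitely many remaining configurations by the explicit check'' are statements of intent, not arguments, and the entire content of the lemma beyond your lower bound lives in exactly that unexecuted half.

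Concretely, the missing idea for the systems without odd real roots is not about the string through $\al_j$ (which is what you propose to compute via Corollary \ref{cor1}) but about the string through $\al-\al_j$: when $R_{\bar 1}^{re}=\emptyset$ and $\al\in R^{re}=R_{\bar 0}$, the root $\al-\al_j$ is automatically isotropic, and since $(\al-\al_j)+\al_i\in R$ the $\al_i$-string through an isotropic root is one-sided (Lemma \ref{lem2} forces $(\al-\al_j,\al_i^{\vee})<0$, which is incompatible with $\al-\al_j-\al_i\in R$), so $m_i^-(\al-\al_j)=1$; combined with your lower bound this gives $m_i^-(\al)=1$ and closes those cases completely. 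This is essentially the first half of the paper's proof, which then argues $m_i^-(\al)=1$ by feeding $\al-\al_i$ back into Lemma \ref{lem6} to produce the forbidden root $\al-\al_i-\al_j$ --- the same contradiction your lower bound encodes. For the remaining configurations, which occur only in $B(m,n)$ and $G(3)$ (you are right that $R_{\bar 1}^{re}\neq\emptyset$ is the relevant dividing line, since $\overline{\al-\al_j}=\overline{\al}+\bar 1$), the paper likewise falls back on inspection of the quiver diagrams and asserts $m_i^-(\al-\al_j)=m_i^-(\al)=2$; your plan to do the same is legitimate in kind, but until either the isotropic-string observation is made explicit or the $B(m,n)$/$G(3)$ check is carried out, the proposal does not prove the statement.
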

	\begin{proof}
		Let $\al\in R_{\bar{0}}$ and if $\al-\al_j\in R^{im}$, then $m_i^-(\al-\al_j)=1$ as $\al-\al_j+\al_i\in R$, also $m_i^-(\al)=1$. Otherwise, $(\al-\al_i)+\al_i\in R$ and $(\al-\al_i)+\al_i-\al_j\in R$ implies that $\al-\al_i-\al_j\in R$ which contradict $m_i^-(\al-\al_j)=1$. For the remaining cases, we can observe from the quiver diagram that $m_i^-(\al-\al_j)=m_i^-(\al)=2$.
	\end{proof}
	\begin{lemma}\label{lem14}
		Let $\al\in R^{im},\al_i\in R^{re}\text{ and }\al_j\in R^{im}$ in (\ref{lem6}), then $m_i^-(\al-\al_j)=1$.
	\end{lemma}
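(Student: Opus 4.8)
The plan is to translate the statement into a single non-membership assertion and then argue by contradiction. Since $\al_i\in R^{re}$, the definition of the multiplicity gives $m_i^-(\al-\al_j)=1$ if and only if $(\al-\al_j)-\al_i=\al-\al_i-\al_j\notin R$; here the multiplicity is only meaningful when $\al-\al_j\in R$, which by Lemma \ref{lem6} is equivalent to $\al+\al_i\in R$, so I work in that case. Thus the whole lemma reduces to proving $\al-\al_i-\al_j\notin R$, and I would assume for contradiction that $\al-\al_i-\al_j\in R$.

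First I would fix the nature of $\al-\al_j$. Under the contradiction hypothesis the $\al_i$-string through $\al+\al_i-\al_j$ contains both $\al+\al_i-\al_j$ and $(\al+\al_i-\al_j)-2\al_i=\al-\al_i-\al_j$; root strings in the real direction $\al_i$ are unbroken, so the intermediate vector $\al-\al_j$ is a root and the $\al_i$-string through it has length at least three, giving $m_i^-(\al-\al_j)\geq 2$ and $m_i^+(\al-\al_j)\geq 2$. Next, since $\al,\al_j\in R^{im}$ and $(\al-\al_j,\al-\al_j)=-2(\al,\al_j)$, the root $\al-\al_j$ is isotropic exactly when $(\al,\al_j)=0$; I would rule that possibility out (no root is the difference of two orthogonal isotropic roots, which one reads off the irreducible root systems), so $(\al,\al_j)\neq 0$ and $\al-\al_j\in R^{re}$. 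I also record, via the remark after Lemma \ref{lem3}, that $r_{\al_j}(\al)=\al-\al_j$ forces $\al+\al_j\notin R$. At this stage we have a length-$\geq 3$ string of the real root $\al_i$ through the real root $\al-\al_j$, sitting next to the isotropic roots $\al=(\al-\al_j)+\al_j$ and $\al_j$.

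For the contradiction I would play the real $\al_i$-string off against the short isotropic $\al_j$-strings through the members of that string. Concretely, from $\al+\al_j\notin R$ and $\al-2\al_j\notin R$ (Lemma \ref{lem3}) the $\al_j$-string through $\al-\al_j$ is just $\{\al-\al_j,\al\}$, and analysing the $\al_j$-strings through $\al\pm\al_i-\al_j$ by Corollary \ref{cor1}, Note \ref{nt1} and Lemma \ref{lem5} over-determines whether the neighbours $\al\pm\al_i-\al_j$ are real or isotropic, which is incompatible with the assumed length-three $\al_i$-string. The hard part will be exactly this local compatibility check: the $\al_i$-string in a real direction need not be short, and its endpoints mix real and isotropic roots, so the purely axiomatic bookkeeping is delicate. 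Following the method already used for Lemma \ref{lem13} and the two preceding corollaries, I expect the cleanest way to finish is to inspect the finitely many relevant squares directly on the explicit quiver diagrams of the irreducible GRS; in practice only $B(m,n)$ and $G(3)$ carry $R_{\bar{1}}^{re}$, so only these need to be examined to confirm that $\al-\al_i-\al_j$ is never a root.
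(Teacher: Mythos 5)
Your reduction of the statement to the single assertion $\al-\al_i-\al_j\notin R$ is correct, and the preliminary observations under the contradiction hypothesis (that $\al-\al_j$ must then be a real root with $m_i^{\pm}(\al-\al_j)\geq 2$, and that $\al+\al_j\notin R$) are sound. But be aware that the paper itself offers no axiomatic argument here at all: its entire proof is the sentence ``this result holds by observation of the quiver diagram,'' i.e.\ a case-by-case inspection of the diagrams in Section \ref{sec5}. Your proposal ultimately lands in the same place --- you concede that the decisive ``local compatibility check'' is never derived from the axioms and propose to finish by inspecting the quivers --- so in substance you are reproducing the paper's method with some extra (correct but inconclusive) preprocessing. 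The sketched contradiction via the $\al_j$-strings through $\al\pm\al_i-\al_j$ is only asserted to be ``over-determined''; nothing incompatible is actually exhibited, so as written there is no proof.

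There is also a concrete error in the scope of your fallback case check. The criterion ``only $B(m,n)$ and $G(3)$ have nonempty $R_{\bar 1}^{re}$'' belongs to the second corollary after Lemma \ref{lem6}, which concerns $\al\in R_{\bar 1}$ with \emph{both} $\al+\al_i$ and $\al-\al_j$ real. In Lemma \ref{lem14} the roots forced into play are $\al-\al_j$ and $\al-\al_i-\al_j$, both of which are \emph{even} (each is a sum of two odd roots, or of two odd and one even), so the presence or absence of odd real roots is not the right filter. The configuration of Lemma \ref{lem14} --- $\al\in R^{im}$, $\al_i\in\Pi\cap R^{re}$, $\al_j\in\Pi\cap R^{im}$ with $\al-\al_j$, $\al+\al_i$, $\al+\al_i-\al_j\in R$ --- occurs already in $A(m,n)$ (e.g.\ $\al=\ep_m-\del_1$, $\al_j=\ep_{m+1}-\del_1$, $\al_i=\ep_{m-1}-\ep_m$) and in the other classical series, so restricting the verification to $B(m,n)$ and $G(3)$ would leave the lemma unproved precisely where it is most often invoked. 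If you intend to finish by inspection, you must run through all the irreducible GRS with isotropic roots, as the paper's quiver diagrams do.
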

    \begin{proof}
        This result hold by observation of quiver diagram.
    \end{proof}
	\section{Generators and Relations}\label{sec3}
	Following the notations from previous section, in this section we going to define our main recipe. Let $R=R_{\bar 0}\cup R_{\bar1}=R^{re}\cup R^{im}$ be a irreducible GRS with distinguished  base $\Pi=\{\al_1,\al_2,\ldots,\al_n\}$. Consider the vector super space $M$ with homogeneous basis $\{u_i:i\in I\}\cup\{v_{\alpha}:\alpha\in R\}$, the parity is given by $\overline u_i=\bar 0$ and $\overline v_{\alpha}=\overline \alpha$. For $i\in I$ define the operators $e_i,f_i\text{ and }h_i$ on $M$ by, for $j\in I,\alpha\in R$, 
	\begin{align*}
		&e_i(u_j)=|(\alpha_i,\alpha_j^{\vee})| v_{{\alpha}_i},
		&e_i(v_{\alpha})=
		\begin{cases}
			0&\text{ ; if }\alpha+\alpha_i\notin R\\(-1)^{\bar{i}}u_i&\text{ ; if }\alpha=-\alpha_i\\
			((1-\bar{i})(sgn~\al)m_i^-(\alpha)+\bar{i}~m_i^-(\alpha))v_{\alpha+\alpha_i}&\text{ ; if }\alpha\in R^{re}\\((1-\bar{i})~m_i^-(\al)+\bar{i}~(\alpha,\alpha_i^{\vee}))(sgn~\al)v_{\alpha+\alpha_i}&\text{ ; if }\alpha\in R^{im}
		\end{cases}\\
		&f_i(u_j)=|(\alpha_i,\alpha_j^{\vee})|v_{-\alpha_i},
		&f_i(v_{\alpha})=\begin{cases}
			0&\text{ ; if }\alpha-\alpha_i\notin R\\u_i&\text{ ; if }\alpha=\alpha_i\\
			(sgn~\al)m_i^+(\alpha)v_{\alpha-\alpha_i}&\text{ ; if }\alpha\in R^{re}\\((1-\bar{i})(sgn~\al)m_i^+(\al)+\bar{i}~(\alpha,\alpha_i^{\vee}))v_{\alpha-\alpha_i}&\text{ ; if }\alpha\in R^{im}
		\end{cases}\\
		&h_i(u_j)=0\text{ and } h_i(v_\alpha)=(\alpha,\alpha_i^{\vee})v_{\alpha}.
	\end{align*} 
	\begin{lemma}\label{lem7} The following results are holds for the above defined operators and we simply write $gf$ instead of composition of the maps $g\circ f$.
		\begin{enumerate}
			\item $e_i,f_i$ are homogeneous linear map of degree $\overline e_i=\overline f_i=\bar i$ and $h_i$ is homogeneous linear map of degree $\overline h_i= 0.$
			\item $h_ih_j-h_jh_i=0$.
			\item $h_ie_j-e_jh_i=(\alpha_j,\alpha_i^{\vee})e_j$.
			\item $h_if_j-f_j h_i=-(\alpha_j,\alpha_i^{\vee})f_j$.
			\item $e_i f_i-(-1)^{\bar i}f_i e_i=h_i$.
			\item For $i\neq j$, $e_i f_j-(-1)^{\bar i\bar j}f_j e_i=0$.
		\end{enumerate}
		\end{lemma}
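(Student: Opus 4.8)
The plan is to verify each of the six relations in Lemma~\ref{lem7} by evaluating both sides on the two types of basis vectors, namely $u_j$ and $v_\alpha$, and using the explicit formulas defining $e_i,f_i,h_i$ together with the root-string identities collected in Section~\ref{sec2}. Relation (a) is immediate from the definitions: $e_i$ and $f_i$ shift $v_\alpha$ to $v_{\alpha\pm\alpha_i}$ (and send $u_j$ into the $v$-part), so they change parity by $\overline{\alpha_i}=\bar i$, while $h_i$ acts diagonally and preserves parity. Relation (b) is equally direct, since each $h_i$ acts on $v_\alpha$ by the scalar $(\alpha,\alpha_i^\vee)$ and kills every $u_j$, so the two diagonal operators commute. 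For (c) and (d) I would apply both sides to $u_j$ and to $v_\alpha$ separately; on $v_\alpha$ the computation reduces to comparing the scalar $(\alpha,\alpha_i^\vee)$ produced by $h_i$ before and after the shift $\alpha\mapsto\alpha\pm\alpha_j$, and the key algebraic input is the bilinearity identity $(\alpha\pm\alpha_j,\alpha_i^\vee)=(\alpha,\alpha_i^\vee)\pm(\alpha_j,\alpha_i^\vee)$, which yields exactly the commutator eigenvalue $\pm(\alpha_j,\alpha_i^\vee)$.

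The genuinely substantial relations are (e) and (f). For the Cartan relation (e), I would split the evaluation of $e_if_i-(-1)^{\bar i}f_ie_i$ on $v_\alpha$ according to whether $\alpha\in R^{re}$ or $\alpha\in R^{im}$, and treat the boundary cases $\alpha=\pm\alpha_i$ separately. In each case the composite $e_if_i$ picks up a product of the form $(\text{coefficient of }f_i\text{ on }v_\alpha)\cdot(\text{coefficient of }e_i\text{ on }v_{\alpha-\alpha_i})$, and similarly for $f_ie_i$; the difference of these two products must collapse to the single scalar $(\alpha,\alpha_i^\vee)$ dictated by $h_i$. When $\alpha_i\in R^{re}$ the decisive identity is the root-string property $(\alpha,\alpha_i^\vee)=m_i^-(\alpha)-m_i^+(\alpha)$, so the whole verification hinges on showing that the product of structure constants telescopes into this difference; the sign $(-1)^{\bar i}$ is precisely what is needed to reconcile the even and odd cases. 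When $\alpha_i\in R^{im}$ one instead uses $(\alpha_i,\alpha_i^\vee)=0$ together with the bounds $m_i^{\pm}\le 2$ from Lemmas~\ref{lem3} and~\ref{lem5} and the explicit $\bar i$-dependent coefficients in the definition.

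The hardest step will be relation (f), the mixed Serre-type vanishing $e_if_j-(-1)^{\bar i\bar j}f_je_i=0$ for $i\neq j$. Evaluated on $v_\alpha$, the term $e_if_j$ is nonzero only when both $\alpha-\alpha_j$ and $(\alpha-\alpha_j)+\alpha_i=\alpha+\alpha_i-\alpha_j$ lie in $R$, and symmetrically $f_je_i$ requires $\alpha+\alpha_i$ and $\alpha+\alpha_i-\alpha_j$ in $R$. This is exactly the ``square'' configuration of Lemma~\ref{lem6}, which guarantees that the two paths $\alpha\to\alpha-\alpha_j\to\alpha+\alpha_i-\alpha_j$ and $\alpha\to\alpha+\alpha_i\to\alpha+\alpha_i-\alpha_j$ are simultaneously present, so the two composites land on the same basis vector $v_{\alpha+\alpha_i-\alpha_j}$ and there is something to cancel. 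The crux is then to match their coefficients, which are products of the structure constants of $e_i$ and $f_j$; this is where the multiplicativity relations $m_i^-(\alpha)\,m_j^+(\alpha+\alpha_i)=m_j^+(\alpha)\,m_i^-(\alpha-\alpha_j)$ of Lemma~\ref{lem12} (and its specializations Lemmas~\ref{lem13} and~\ref{lem14}) do the essential work. I expect the bookkeeping to branch into several cases depending on the parities $\bar i,\bar j$ and on which of $\alpha,\alpha_i,\alpha_j$ are isotropic, and the main obstacle will be checking that the signs coming from $sgn$, from $(-1)^{\bar i\bar j}$, and from the $\bar i$-weighted coefficients all conspire so that the two coefficient products are genuinely equal, not merely proportional. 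The boundary cases where $\alpha+\alpha_i$ or $\alpha-\alpha_j$ equals $\pm\alpha_i$ or $\pm\alpha_j$ (so that a $u$-term appears) will need separate, careful handling.
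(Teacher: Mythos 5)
Your proposal follows essentially the same route as the paper: evaluate each relation on the basis vectors $u_j$ and $v_\alpha$, dispose of (a)--(d) by bilinearity of $(\cdot,\cdot)$, reduce (e) to the root-string identity $(\alpha,\alpha_i^\vee)=m_i^-(\alpha)-m_i^+(\alpha)$ (with Lemmas~\ref{lem3} and~\ref{lem5} controlling the isotropic cases), and settle (f) via the square configuration of Lemma~\ref{lem6} together with the coefficient-matching identities of Lemmas~\ref{lem12}--\ref{lem14}. What remains is only the mechanical execution of the case analysis you describe, which is exactly what the paper's proof carries out.
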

	\begin{proof} (a) Since $\overline{\alpha\pm\beta}=\overline\alpha+\overline\beta$ and by the definition of $e_i$ we have $u_j\mapsto v_{\alpha_i}, v_{\alpha}\mapsto v_{\alpha+\alpha_i}$ and $v_{-\alpha_i}\mapsto u_i$. Similarly, by the definition of $f_i$ we have $u_j\mapsto v_{-\alpha_i}, v_{\alpha}\mapsto v_{\alpha-\alpha_i}$ and $v_{\alpha_i}\mapsto u_i$. Hence $e_i$'s and $f_i$'s are of degree $\bar i$. Also, $h_i$ maps $u_i\mapsto 0$ and $v_{\alpha}\mapsto v_{\alpha}$. Hence $h_i$'s are of degree 0.
		
		\noindent
		(b) Since, $h_j h_i(u_k)=0=h_i h_j(u_k)$ and $h_j h_i(v_{\alpha})=(\alpha,\alpha_i^{\vee})(\alpha,\alpha_j^{\vee})v_{\alpha}=h_i h_j(v_{\alpha})$.
		
		\noindent
		(c) $(h_i e_j-e_j h_i)(u_i)=h_i(|(\alpha_j,\alpha_i^{\vee})|v_{\alpha_j})=|(\alpha_j,\alpha_i^{\vee})|(\alpha_j,\alpha_i^{\vee})v_{\alpha_j}=(\alpha_j,\alpha_i^{\vee})e_j(u_i)$ and 
		\begin{eqnarray*}
			h_i(e_j(v_{\alpha}))&=&h_i\left(\begin{cases}
				0&\text{ ; if }\alpha+\alpha_j\notin R\\(-1)^{\bar{j}}u_j&\text{ ; if }\alpha=-\alpha_j\\
				((1-\bar{j})(sgn~\al)m_j^-(\alpha)+\bar{j}~m_j^-(\alpha))v_{\alpha+\alpha_j}&\text{ ; if }\alpha\in R^{re}\\((1-\bar{j})m_j^-(\al)+\bar{j}(\alpha,\alpha_j^{\vee}))(sgn~\al)v_{\alpha+\alpha_j}&\text{ ; if }\alpha\in R^{im}
			\end{cases}\right)\\
			&=&\begin{cases}
				0&\text{; }\alpha+\alpha_j\notin R\\0&\text{; }\alpha=-\alpha_j\\
				((1-\bar{j})(sgn~\al)m_j^-(\alpha)+\bar{j}~m_j^-(\alpha))(\alpha+\alpha_j,\alpha_i^{\vee}) v_{\alpha+\alpha_j}&\text{; }\alpha\in R^{re}\\((1-\bar{j})m_j^-(\al)+\bar{j}(\alpha,\alpha_j^{\vee}))(sgn~\al)(\alpha+\alpha_j,\alpha_i^{\vee})v_{\alpha+\alpha_j}&\text{; }\alpha\in R^{im}
			\end{cases}	
		\end{eqnarray*}

		\noindent	
		$e_j(h_i(v_{\alpha}))=e_j((\alpha,\alpha_i^{\vee}) v_{\alpha})=
		(\alpha,\alpha_i^{\vee})\begin{cases}
			0&\text{ ; if }\alpha+\alpha_j\notin R\\(-1)^{\bar{j}}u_j&\text{ ; if }\alpha=-\alpha_j\\
			((1-\bar{j})(sgn~\al)m_j^-(\alpha)+\bar{j}~m_j^-(\alpha))v_{\alpha+\alpha_j}&\text{ ; if }\alpha\in R^{re}\\((1-\bar{j})m_j^-(\al)+\bar{j}(\alpha,\alpha_j^{\vee}))(sgn~\al)v_{\alpha+\alpha_j}&\text{ ; if }\alpha\in R^{im}
		\end{cases}$.
		
		\noindent
		\begin{eqnarray*}
			(h_i e_j-e_j h_i)(v_{\alpha})&=&\begin{cases}
				0&\text{; }\alpha+\alpha_j\notin R\\-(-1)^{\bar j}(\alpha,\alpha_i^{\vee})u_j&\text{; }\alpha=-\alpha_j\\(\alpha_j,\alpha_i^{\vee})((1-\bar{j})(sgn~\al)
				m_j^-(\alpha)+\bar{j}
				m_j^-(\alpha))v_{\alpha+\alpha_j}&\text{; }\alpha\in R^{re}\\(\alpha_j,\alpha_i^{\vee})((1-\bar{j})m_j^-(\al)+\bar{j}(\alpha,\alpha_j^{\vee}))(sgn~\al)v_{\alpha+\alpha_j}&\text{; }\alpha\in R^{im}
			\end{cases}\\
			&=&(\alpha_j,\alpha_i^{\vee}) e_j(v_{\alpha}).
		\end{eqnarray*}
		
		\noindent
		(d) Similar to the proof of (c).
		
		\noindent
		(e) Since, $e_if_i(u_k)=(-1)^{\bar i}|(\alpha_i,\alpha_k^{\vee})|u_i$ and $f_ie_i(u_k)=|( \alpha_i,\alpha_k^{\vee})|u_i$. Hence 
		$(e_if_i-(-1)^{\bar i}f_ie_i)(u_k)=0$. Now, at $v_{\alpha}$, values can be identified by all possible cases.
		
		Case (i): For $\al=\al_i$, $e_if_i(v_{\al_i})=|(\al_i,\al_i^{\vee})|v_{\al_i}$. Since $2\al_i\notin R$ as $\al_i\in\Pi$, hence $e_i(v_{\alpha_i})=0$ implies $f_ie_i(v_{\alpha_i})=0$. If $\al_i\in R^{re}~(\bar i=0)$, $e_if_i(v_{\al_i})=2v_{\al_i}$ and $h_i(v_{\al_i})=(\al_i,\al_i^{\vee})v_{\al_i}=2v_{\al_i}$. If $\al_i\in R^{im}$, $e_if_i(v_{\al_i})=0$ and $h_i(v_{\al_i})=0$, hence the relation holds.
		
		Case (ii): For $\al=-\al_i$, $e_i(v_{-\al_i})=(-1)^{\bar i}u_i$ implies that $f_ie_i(v_{-\al_i})=(-1)^{\bar i}|(\al_i,\al_i^{\vee})|v_{-\al_i}$. Again $2\al_i\notin R$ as $\al_i\in\Pi$, hence $f_i(v_{-\alpha_i})=0$ implies that $e_if_i(v_{-\alpha_i})=0$. If $\al_i\in R^{re}~(\bar i=0)$, the relation holds as $h_i(v_{-\al_i})=-(\al_i,\al_i^{\vee})v_{-\al_i}$ and if $\al_i\in R^{im}$ then $f_ie_i(v_{-\al_i})=0$ also $h_i(v_{-\al_i})=0$, the relation holds.
		
		Case (iii): For $\al\neq\pm\al_i$, suppose that $\al+\al_i\notin R$ and $\al-\al_i\notin R$, then $e_i(v_{\al})=f_i(v_{\al})=0$ and $h_i(v_{\al})=(\al,\al_i^{\vee})v_{\al}=0$ as if $\al_i\in R^{re}$ which has root string property with $m_i^{\pm}(\al)=1$ and if $\al_i\in R^{im}$, since $\al\pm\al_i\notin R$ implies that $(\al,\al_i)=0$.
		
		If $\al+\al_i\in R$ but $\al-\al_i\notin R$, then $e_if_i(v_{\al})=0$ and $m_i^-(\al)=1$. For $\bar{i}=0,~(\al_i\in R^{re})$,
		$$e_i(v_\al)=(sgn~\al)m_i^-(\al)v_{\al+\al_i}$$ $$f_ie_i(v_{\al})=(sgn~\al)m_i^-(\al)(sgn~\al+\al_i)m_i^+(\al+\al_i)v_{\al}.$$
		Since, $sgn~\al=sgn(\al+\al_i)$ and $m_i^+(\al+\al_i)=m_i^+(\al)-1$, therefore $-(-1)^{\bar i}f_ie_i(v_{\al})=-(m_i^+(\al)-1)v_{\al}$ and $h_i(v_\al)=(\al,\al_i^{\vee})v_{\al}=(m_i^-{(\al)}-m_i^+(\al))v_{\al}=(1-m_i^+(\al))v_{\al}$. Hence equality holds.
		
		For $\bar{i}=1,(\al_i\in R^{im})$, we have
		\begin{align*}
			e_i(v_{\alpha})&=\begin{cases}
				m_i^-(\alpha)v_{\al+\alpha_i}& \text{; if }\alpha\in R^{re}\\(sgn~\al)(\alpha,\alpha_i^{\vee})v_{\al+\al_i}&\text{; if }\alpha\in R^{im}
			\end{cases}\\
			f_ie_i(v_{\alpha})&=\begin{cases}
				m_i^-(\alpha)(sgn~\al+\al_i)m_i^+(\al+\al_i)v_{\alpha}& \text{; if }\al,\al+\al_i\in R^{re}\\
				m_i^-(\al)(\al+\al_i,\al_i^{\vee})v_{\al}&\text{; if }\al\in R^{re}\text{ and }\al+\al_i\in R^{im}\\
				(sgn~\al)(\al,\al_i^{\vee})(sgn~\al+\al_i)m_i^+(\al+\al_i)v_{\al}&\text{; if }\al\in R^{im}.
			\end{cases}
		\end{align*}		
		If $\al,\al+\al_i\in R^{re}$, by Lemma (\ref{lem5},(b)) $m_i^+(\al)=2$, then $-(-1)^{\bar i}f_ie_i(v_{\al})=(sgn~\al+\al_i)(m_i^+(\al)-1)v_{\al}=(sgn~\al+\al_i)v_{\al}=h_i(v_{\al})$ as $(\al,\al_i^{\vee})=(sgn~\al)=sgn(\al+\al_i)$.
		
		In second alternate,  $f_ie_i(v_{\al})=(\al+\al_i,\al_i)v_{\al}=(\al,\al_i)v_{\al}=h_i(v_{\al})$.
		
		In last one $\al~\&~\al_i\in R^{im}$ implies that $\al+\al_i\in R^{re}$ (by Lemma \ref{lem1}), $sgn(\al)=sgn(\al+\al_i)$ and $m_i^+(\al+\al_i)=1$ (by Lemma \ref{lem3}) implies that $f_ie_i(v_{\al})=(\al,\al_i)v_{\al}=h_i(v_{\al})$.
		
		If $\al-\al_i\in R$ but $\al+\al_i\notin R$, then $f_ie_i(v_{\al})=0$ and $m_i^+(\al)=1$. For $\bar{i}=0,~(\al_i\in R^{re})$,
		$$f_i(v_\al)=(sgn~\al)m_i^+(\al)v_{\al-\al_i}$$
		$$e_if_i(v_{\al})=(sgn~\al)m_i^+(\al)(sgn~\al-\al_i)m_i^-(\al-\al_i)v_{\al}$$
		Since, $sgn(\al)=sgn(\al-\al_i)$ and $m_i^-(\al-\al_i)=m_i^-(\al)-1$, therefore $e_if_i(v_{\al})=(m_i^-(\al)-m_i^+(\al))v_{\al}=(\al,\al_i^{\vee})v_{\al}=h_i(v_{\al})$.
		
		For $\bar{i}=1,~(\al_i\in R^{im})$, we have
		\begin{align*}
			f_i(v_{\alpha})&=\begin{cases}
				(sgn~\al)m_i^+(\alpha)v_{\alpha-\al_i}& \text{; if }\al\in R^{re}\\(\al,\al_i)v_{\al-\al_i}&\text{; if }\al\in R^{im}
			\end{cases}\\
			e_if_i(v_{\alpha})&=\begin{cases}
				(sgn~\al)m_i^+(\alpha)m_i^-(\al-\al_i)v_{\alpha}& \text{; if }\al,\al-\al_i\in R^{re}\\
				(sgn~\al)m_i^+(\al)(sgn~\al-\al_i)(\al-\al_i,\al_i)v_{\al}&\text{; if }\al\in R^{re}\text{ and }\al-\al_i\in R^{im}\\
				(\al,\al_i)m_i^-(\al-\al_i)v_{\al}&\text{; if }\al\in R^{im}.
			\end{cases}
		\end{align*}
		
		If $\al,\al-\al_i\in R^{re}$, then by Lemma (\ref{lem5},(b)) $m_i^-(\al)=2$ implies that $e_if_i(v_{\al})=(sgn~\al)(m_i^-(\al)-1)v_{\al}=(sgn~\al)v_{\al}=h_i(v_{\al})$ as $(\al,\al_i)=(sgn~\al)$.
		
		In second alternate, $sgn(\al)=sgn(\al-\al_i)$ hence $e_if_i(v_{\al})=(\al-\al_i,\al_i)v_{\al}=(\al,\al_i)v_{\al}=h_i(v_{\al})$.
		
		In last one $\al,\al_i\in R^{im}\Rightarrow\al-\al_i\in R^{re}$ (by Lemma \ref{lem1}) and $m_i^-(\al-\al_i)=1$ (by Lemma \ref{lem3}) implies the equality.
		
		If both $\al\pm\al_i\in R\Rightarrow\al_i\notin R^{im}$ and since $\Pi\cap R_{\bar 1}^{re}=\emptyset\Rightarrow\bar{\al_i}=0$. Then 
		$$e_if_i(v_{\al})=m_i^+(\al)m_i^-(\al-\al_i)v_{\al},$$ $$f_ie_i(v_{\al})=m_i^-(\al)m_i^+(\al+\al_i)v_{\al}.$$ 
		
		Hence, $(m_i^+(\al)m_i^-(\al-\al_i)-(-1)^{\bar i}m_i^-(\al)m_i^+(\al+\al_i))v_{\al}=(m_i^-(\al)-m_i^+(\al))=(\al,\al_i^{\vee})v_{\al}$.
		
		\noindent
		(f) For $i\neq j$,
		$e_if_j(u_k)=|( \alpha_j,\alpha_k^{\vee})|m_i^-(-\al_j)v_{-\alpha_j+\alpha_i}=0$ and $f_je_i(u_k)=|( \alpha_i,\alpha_k^{\vee})|m_j^+(\al_i)v_{\alpha_i-\alpha_j}=0$, since $\alpha_i-\alpha_j\notin R$. Now at $v_{\alpha}$ values can be identified by all possible cases.
		
		Case (i): For $\alpha=\alpha_i$, since $\alpha_i-\alpha_j\notin R$, $f_j(v_{\alpha_i})=0\text{ implies that } e_i(f_j(v_{\alpha_i}))=0$ and $e_i(v_{\alpha_i})=0$ as $2\al_i\notin R$ implies $f_j(e_i(v_{\alpha_i}))=0$.
		
		Case (ii): For $\alpha=-\alpha_j$, since $\alpha_i-\alpha_j\notin R$, $e_i(v_{-\alpha_j})=0\text{ implies that } f_j(e_i(v_{-\alpha_j}))=0$ and $f_j(v_{-\alpha_j})=0$ as $2\al_j\notin R$ implies $e_i(f_j(v_{-\alpha_j}))=0$.
		
		Case (iii): For $\alpha=\alpha_j$, $f_j(v_{\alpha_j})=u_j$ and $e_i(f_j(v_{\alpha_j}))=|(\alpha_i,\alpha_j^{\vee})|v_{\alpha_i}$. For $\bar{i}=0~(\al_i\in R^{re})$
		\begin{align*}
			e_i(v_{\alpha_j})&=\begin{cases}
				0&\text{; if }\al_j+\al_i\notin R\\
				(sgn~\al_j)m_i^-(\alpha_j)v_{\al_j+\alpha_i}& \text{; otherwise }\\
			\end{cases}\\
			f_je_i(v_{\alpha_j})&=\begin{cases}
				0&\text{; if }\al_j+\al_i\notin R\\
				(sgn~\al_j)m_i^-(\alpha_j)(sgn~\al_j+\al_i)m_j^+(\al_j+\al_i)v_{\alpha_i}& \text{; if }\al_j\in R^{re}\\
				(sgn~\al_j)m_i^-(\al_j)(sgn~\al_j+\al_i)m_j^+(\al_j+\al_i)v_{\al_i}&\text{; if }\al_j\in R^{im}\text{ and }\al_j+\al_i\in R^{re}\\
				(sgn~\al_j)m_i^-(\al_j)(\al_i+\al_j,\al_j^{\vee})v_{\al_i}&\text{; if }\al_j,\al_j+\al_i\in R^{im}.
			\end{cases}\\
		\end{align*}
		
		In first alternate, if $\al_j\in R^{re}$, then $(\al_i,\al_j^{\vee})=0$ as $m_j^{\pm}(\al_i)=1$ and if $\al_j\in R^{im}$ with $\al_i\pm\al_j\notin R$ implies that $(\al_j,\al_i)=0$. Hence the relation holds.
		
		In second alternate ($\al_j\in R^{re},~\bar j=0)$, $sgn(\al_j)=sgn(\al_j+\al_i)$, $m_i^-(\al_j)=1$ and $m_j^+(\al_i+\al_j)=m_j^+(\al_i)-1=m_j^+(\al_i)-m_j^-(\al_i)=-(\al_i,\al_j^{\vee})$. Then $f_je_i(v_{\al_j})=-(\al_i,\al_j^{\vee})v_{\al_i}$. As $\al_i-\al_j\notin R$ implies that $(\al_i,\al_j^{\vee})=r-q=-q<0$, hence $e_if_j(v_{\al_j})=-(\al_i,\al_j^{\vee})v_{\al_i}$ and the relation holds. 
		
		In third one ($\al_i+\al_j\in R^{re}$,~$\al_j\in R^{im},~\bar j=1$), $m_i^-(\al_j)=1$, $f_je_i(v_{\al_j})=(m_j^+(\al_i)-1)v_{\al_i}=v_{\al_i}$ as $m_j^+(\al_i)=2$. Since $\al_i\in R^+$ hence $(\al_i,\al_j)=sgn(\al_i)=1$ and so $e_if_j(v_{\al_j})=v_{\al_i}$, hence the relation holds.		
		
		In last one, $m_i^-(\al_j)=1$ and $(\al_j+\al_i,\al_j)=(\al_i,\al_j)=sgn(\al_i)=1$ as $\al_i\in R^+$. Therefore $f_je_i(v_{\al_j})=v_{\al_i}=e_if_j(v_{\al_j})$.
		
		Now for $\bar{i}=1~(\al_i\in R^{im})$,
		\begin{align*}
			e_i(v_{\alpha_j})&=\begin{cases}
				0&\text{; if }\al_j+\al_i\notin R\\
				m_i^-(\alpha_j)v_{\al_j+\alpha_i}& \text{; if }\al_j\in R^{re}\\
				(sgn~\al_j)(\al_j,\al_i^{\vee})v_{\al_j+\al_i}&\text{ if }\al_j\in R^{im}
			\end{cases}\\
			f_je_i(v_{\alpha_j})&=\begin{cases}
				0&\text{; if }\al_j+\al_i\notin R\\
				m_i^-(\alpha_j)(sgn~\al_j+\al_i)m_j^+(\al_j+\al_i)v_{\alpha_i}& \text{; if }\al_j\in R^{re}\\
				(sgn~\al_j)(\al_j,\al_i)(sgn~\al_j+\al_i)m_j^+(\al_j+\al_i)v_{\al_i}&\text{; if }\al_j\in R^{im}\\
			\end{cases}\\
		\end{align*}
		
		In first alternate, if $\al_j\in R^{re}$, then $(\al_i,\al_j^{\vee})=0$ as $m_j^{\pm}(\al_i)=1$ and since $\al_j\notin R^{im}$ as $j\neq i$, hence the relation holds.
		
		In second alternate ($\al_j\in R^{re},~\bar j=0)$, $m_i^-(\al_j)=1$ and $m_j^+(\al_i+\al_j)=m_j^+(\al_i)-1=m_j^+(\al_i)-m_j^-(\al_i)=-(\al_i,\al_j^{\vee})$. Then $f_je_i(v_{\al_j})=-(\al_i,\al_j^{\vee})v_{\al_i}$. As $\al_i-\al_j\notin R$ implies that $(\al_i,\al_j^{\vee})=r-q=-q<0$, hence $e_if_j(v_{\al_j})=-(\al_i,\al_j^{\vee})v_{\al_i}$.
		
		Last one is not a case which we have as $j\neq i$ and $\al_j\in R^{im}$.
		
		Case (iv): For $\alpha=-\alpha_i$, $e_i(v_{-\alpha_i})=(-1)^{\bar i}u_i$ and $f_j(e_i(v_{-\alpha_i}))=(-1)^{\bar i}|( \alpha_j,\alpha_i^{\vee})|v_{-\alpha_j}$. 
		
		\noindent
		If $\bar{j}=0~(\al_j\in R^{re})$, we have
		\begin{align*}
			f_j(v_{-\alpha_i})&=\begin{cases}
				0&\text{; if }\al_i+\al_j\notin R\\
				(sgn-\al_i)m_j^+(-\alpha_i)v_{-\al_i-\alpha_j}& \text{; if }\alpha_j+\al_i\in R\\
			\end{cases}\\
			e_if_j(v_{-\alpha_i})&=\begin{cases}
				0&\text{; if }\al_i+\al_j\notin R\\
				(sgn-\al_i)m_j^+(-\alpha_i)(sgn-\al_i-\al_j)m_i^-(-\al_i-\al_j)v_{-\alpha_j}& \text{; if }\al_i\in R^{re}\\
				(sgn-\al_i)m_j^+(-\al_i)m_i^-(-\al_i-\al_j)v_{-\al_j}&\text{; if }\al_i\in R^{im}\text{ and }\al_i+\al_j\in R^{re}\\
				(sgn-\al_i)m_j^+(-\al_i)(sgn-\al_i-\al_j)(-\al_i-\al_j,\al_i^{\vee})v_{-\al_j}&\text{; if }\al_i,\al_i+\al_j\in R^{im}.
			\end{cases}
		\end{align*}
		
		In first alternate, if $\al_i\in R^{re}$, then $(\al_j,\al_i^{\vee})=0$ as $m_i^{\pm}(\al_j)=1$ and if $\al_i\in R^{im}$ with $\al_j\pm\al_i\notin R$ implies that $(\al_j,\al_i)=0$.
		
		In second alternate ($\al_i\in R^{re},~\bar i=0$), $sgn(-\al_i)=sgn(-\al_i-\al_j)$, $m_j^+(-\al_i)=1$ and $m_i^-(-\al_j-\al_i)=m_i^-(-\al_j)-1=m_i^-(-\al_j)-m_i^+(-\al_j)=-(\al_j,\al_i^{\vee})$. Then $e_if_j(v_{-\al_j})=-(\al_j,\al_i^{\vee})v_{-\al_j}$ and as $\al_j-\al_i\notin R$ implies that $(\al_j,\al_i^{\vee})=r-q=-q<0$. Hence $f_je_i(v_{-\al_j})=-(\al_j,\al_i^{\vee})v_{-\al_j}$ and the relation holds.
		
		In third one ($\al_i\in R^{im},~\bar i=1$), $sgn(-\al_i)=-1$ and $m_j^+(-\al_i)=1$. Hence $e_if_j(v_{-\al_j})=-(m_i^-(-\al_j)-1)v_{-\al_j}=-v_{-\al_j}$ as $m_i^-({-\al_j})=2$ and $f_je_i(v_{-\al_i})=-v_{-\al_j}$ as $(\al_j,\al_i^{\vee})=sgn(\al_j)=1$ since $\al_j\in R^+$.
		
		In last one ($\bar i=1$), $(-\al_j-\al_i,\al_i^{\vee})=-(\al_j,\al_i^{\vee})=-sgn(\al_j)=-1$ as $\al_j\in R^+$. Therefore $e_if_j(v_{-\al_i})=-v_{-\al_j}$ and $f_je_i(v_{-\al_i})=-v_{-\al_j}$, hence the relation holds.
		
		If $\bar{j}=1~(\al_j\in R^{im})$, then 
		\begin{align*}
			f_j(v_{-\alpha_i})&=\begin{cases}
				0&\text{; if }\al_i+\al_j\notin R\\
				(sgn-\al_i)m_j^+(-\alpha_i)v_{-\al_i-\alpha_j}& \text{; if }\al_i\in R^{re}\\
				(-\al_i,\al_j^{\vee})v_{-\al_i-\al_j}&\text{; if }\al_i\in R^{im}
			\end{cases}\\
			e_if_j(v_{-\alpha_i})&=\begin{cases}
				0&\text{; if }\al_i+\al_j\notin R\\
				(sgn-\al_i)m_j^+(-\alpha_i)(sgn-\al_i-\al_j)m_i^-(-\al_i-\al_j)v_{-\alpha_j}& \text{; if }\al_i\in R^{re}\\
				(-\al_i,\al_j^{\vee})m_i^-(-\al_i-\al_j)v_{-\al_j}&\text{; if }\al_i\in R^{im}\\
			\end{cases}
		\end{align*}
		
		In first alternate, if $\al_i\in R^{re}$, then $(\al_j,\al_i^{\vee})=0$ as $m_i^{\pm}(\al_j)=1$ and since $\al_i\notin R^{im}$ as $i\neq j$.
		
		In second alternate ($\al_i\in R^{re},~\bar i=0$), $sgn(-\al_i-\al_j)=(sgn-\al_i-\al_j)$, $m_j^+(-\al_i)=1$ and $m_i^-(-\al_j-\al_i)=m_i^-(-\al_j)-1=m_i^-(-\al_j)-m_i^+(-\al_j)=-(\al_j,\al_i^{\vee})$. Then $e_if_j(v_{-\al_j})=-(\al_j,\al_i^{\vee})v_{-\al_j}$ and as $\al_j-\al_i\notin R$ implies that $(\al_j,\al_i^{\vee})=r-q=-q<0$. Hence $f_je_i(v_{-\al_j})=-(\al_j,\al_i^{\vee})v_{-\al_j}$.
		
		Last one is not a case which we have, hence the relation holds.
		
		Case (v): Consider $\alpha\notin\{\pm\alpha_i,\pm\alpha_j\}$. If both $\al+\al_i\notin R$ and $\al-\al_j\notin R$, then $e_i(v_{\al})=f_j(v_{\al})=0$. Also, if $\al+\al_i-\al_j\notin R$, the relation holds. By Lemma \ref{lem6} it remains to consider the case where $\al+\al_i\in R$, $\al-\al_j\in R$ and $\al+\al_i-\al_j\in R$ and the relation is verified for all possible cases.
		\begin{itemize}
			\item[1.] If $\al_i,\al_j\in R^{re}$, then $$e_if_j(v_{\al})=(sgn~\al)m_j^+(\al)(sgn~\al-\al_j)m_i^-(\al-\al_j)~v_{\al+\al_i-\al_j}$$ $$f_je_i(v_{\al})=(sgn~\al)m_i^-(\al)(sgn~\al+\al_i)m_j^+(\al+\al_i)~v_{\al+\al_i-\al_j}.$$
			If $\al\in R_{\bar{0}}$, then by (\cite{geck2017construction}, Lemma 2.6) we have $m_i^-(\al)m_j^+(\al+\al_i)=m_j^+(\al)m_i^-(\al-\al_j)$ and hence the relation holds. By Lemma \ref{lem12} we have the relation in the case of $\al\in R_{\bar{1}}$.
			
			\item[2.] If $\al_i\in R^{re}, \al_j\in R^{im}$ and $\al\in R^{re}$, then $$e_if_j(v_{\al})=(sgn~\al)m_j^+(\al)(sgn~\al-\al_j)m_i^-(\al-\al_j)~v_{\al+\al_i-\al_j}$$ $$f_je_i(v_{\al})=(sgn~\al)m_i^-(\al)\begin{cases}
			(sgn~\al+\al_i)m_j^+(\al+\al_i)~v_{\al+\al_i-\al_j}&\text{; if }	\al+\al_i\in R^{re}\\(\al+\al_i,\al_j)~v_{\al+\al_i-\al_j}&\text{; if }\al+\al_i\in R^{im}.
			\end{cases}$$
			Since $m_j^+(\al)=m_j^+(\al+\al_i)=1$ as $\al+\al_j$ and $\al+\al_i+\al_j$ are not in $R$, also $(\al+\al_i,\al_j)=sgn(\al+\al_i)$. Hence, by Lemma \ref{lem13} we have the relation. 
			\item[3.] If $\al_i\in R^{im},\al_j\in R^{re}$ and $\al\in R^{re}$, then 
			$$e_if_j(v_{\al})=(sgn~\al)m_j^+(\al)\begin{cases}
				m_i^-(\al-\al_j)~v_{\al+\al_i-\al_j}&\text{ ; if }\al-\al_j\in R^{re}\\(sgn~\al-\al_j)(\al-\al_j,\al_i)~v_{\al+\al_i-\al_j}&\text{ ; if }\al-\al_j\in R^{im}
			\end{cases}$$
			
			$$f_je_i(v_{\al})=m_i^-(\al)(sgn~\al+\al_i)m_j^+(\al+\al_i)~v_{\al+\al_i-\al_j}.$$
			Since $m_i^-(\al-\al_j)=m_i^-(\al)=1$ as $\al-\al_i$ and $\al-\al_j-\al_i$ are not in $R$, also $(\al-\al_j,\al_i)=sgn(\al-\al_j)$. Now, apply (2) to $-\al+\al_j-\al_i$ with the roles of $\al_i$ and $\al_j$ are exchanged. Consequently, we obtain that  $m_j^-(-\al-\al_i)=m_j^-(-\al)$. It remains to use the fact that $m_{\be}^{\pm}(\ga)=m_{\be}^{\mp}(-\ga)$ for all $\be,\ga\in R$ and so the relation holds as $m_j^+(\al)=m_j^+(\al+\al_i)$.
			
			\item[4.] If $\al_i\in R^{re},\al_j\in R^{im}$ and $\al\in R^{im}$, then 
		 $$e_if_j(v_{\al})=(\al,\al_j)(sgn~\al-\al_j)m_i^-(\al-\al_j)~v_{\al+\al_i-\al_j}$$ $$f_je_i(v_{\al})=(sgn~\al)m_i^-(\al)\begin{cases}
		 	(sgn~\al+\al_i)m_j^+(\al+\al_i)~v_{\al+\al_i-\al_j}&\text{ ; if }\al+\al_i\in R^{re}\\(\al+\al_i,\al_j)~v_{\al+\al_i-\al_j}&\text{ ; if }\al+\al_i\in R^{im}.
		 \end{cases}$$
		 Since $m_i^-(\al)=m_j^+(\al+\al_i)=1$ as $\al-\al_i$ and $\al+\al_i+\al_j$ are not in $R$, also $(\al,\al_j)=(\al+\al_i,\al_j)=sgn(\al)$. Hence, by Lemma \ref{lem14} we have the relation.
			
			\item[5.] If $\al_i\in R^{im},\al_j\in R^{re}$ and $\al\in R^{im}$, then 
			 $$e_if_j(v_{\al})=(sgn~\al)m_j^+(\al)\begin{cases}
			 	m_i^-(\al-\al_j)~v_{\al+\al_i-\al_j}&\text{ ; if }\al-\al_j\in R^{re}\\(sgn~\al-\al_j)(\al-\al_j,\al_i)~v_{\al+\al_i-\al_j}&\text{ ; if }\al-\al_j\in R^{im}
			 \end{cases}$$ $$f_je_i(v_{\al})=(sgn~\al)(\al,\al_i)(sgn~\al+\al_i)m_j^+(\al+\al_i)~v_{\al+\al_i-\al_j}.$$
			Since $\al+\al_j$ and $\al-\al_j-\al_i$ are not in $R$ which implies that $m_j^+(\al)=m_i^-(\al-\al_j)=1$, also $(\al-\al_j,\al_i)=(\al,\al_i)=sgn(\al)$. Now, apply (4) to $-\al+\al_j-\al_i$ with the roles of $\al_i$ and $\al_j$ are exchanged. Consequently, we obtain that $m_j^-(-\al-\al_i)=1=m_j^+(\al+\al_i)$ and we have the relation.
		\end{itemize}

	\end{proof}
	\begin{lemma}\label{lem8}
		The homogeneous linear map $\om:M\rightarrow M$ defined by $\om(u_i)=u_i,~\om(v_{\al})=-v_{-\al}\text{ and }\om(v_{-\al})=-(-1)^{\bar{\al}}v_{\al}$, for $\al\in R^+$, which satisfies the relations $\om e_i\om^{-1}=-f_i$, $\om f_i\om^{-1}=-(-1)^{\bar{i}}e_i$ and $\om h_i\om^{-1}=-h_i$.
	\end{lemma}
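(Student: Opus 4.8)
The plan is to verify the three operator identities by evaluating both sides on the homogeneous basis $\{u_j:j\in I\}\cup\{v_\al:\al\in R\}$. First I would record $\om^{-1}$ explicitly: since $\al\in R^+$ gives $\om(v_\al)=-v_{-\al}$ and $\om(v_{-\al})=-(-1)^{\overline\al}v_\al$, one checks $\om^{-1}(u_j)=u_j$, $\om^{-1}(v_\al)=-(-1)^{\overline\al}v_{-\al}$ and $\om^{-1}(v_{-\al})=-v_\al$ for $\al\in R^+$. A useful bonus observation is that $\om^2$ is exactly the parity operator $P$, namely $\om^2(u_j)=u_j$ and $\om^2(v_\al)=(-1)^{\overline\al}v_\al$ for every $\al\in R$; equivalently $Px=(-1)^{\overline x}x$ on homogeneous $x$, and $P^{-1}=P$. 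This identity will let me avoid duplicating the hardest case analysis.

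Second, the relation $\om h_i\om^{-1}=-h_i$ is immediate. On $u_j$ both sides vanish, and on $v_\al$ with $\al\in R^+$ one pushes $\om^{-1}$ through $h_i$ using $(-\al,\al_i^{\vee})=-(\al,\al_i^{\vee})$ and recombines with $\om$; the two parity factors $(-1)^{\overline\al}$ cancel and leave exactly $-(\al,\al_i^{\vee})v_\al=-h_i(v_\al)$, and the case $\al\in R^-$ is symmetric.

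Third, I would establish $\om e_i\om^{-1}=-f_i$ by cases. On $u_j$ it is a one-line check using $\om(v_{\al_i})=-v_{-\al_i}$, and the degenerate cases $\al=\pm\al_i$ reduce to the $u_i$-clauses of $e_i,f_i$ together with $2\al_i\notin R$. The vanishing cases match because $e_i(v_{-\al})\neq0$ iff $\al-\al_i\in R$, which is precisely when $f_i(v_\al)\neq0$. The substantive case is $\al\neq\pm\al_i$ with $\al-\al_i\in R$: one applies $\om^{-1}$, then $e_i$ (which now raises $-\al$ by $\al_i$, producing $v_{-(\al-\al_i)}$), then $\om$. The crucial simplifications are the string symmetries $m_i^{\pm}(-\al)=m_i^{\mp}(\al)$, together with $sgn(-\al)=-sgn\,\al$ and $(-\al,\al_i^{\vee})=-(\al,\al_i^{\vee})$, which turn the $m_i^-(-\al)$ produced by $e_i$ into the $m_i^+(\al)$ demanded by $f_i$. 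The one point needing care — and what keeps the $\om$-signs tractable — is the sign of $\ga:=\al-\al_i$: a height count shows that if $\al\in R^+$, $\al\neq\al_i$ and $\al-\al_i\in R$, then necessarily $\al-\al_i\in R^+$ (otherwise $\al_i=\al+(\al_i-\al)$ would be a sum of two positive roots, contradicting simplicity). Hence $\om(v_{-\ga})=-(-1)^{\overline\al+\bar i}v_\ga$, and the accumulated parity signs collapse to the single factor $(-1)^{\bar i}$ predicted by the two branches of the definition of $e_i$; since $\al\in R^+$ forces $sgn\,\al=1$, both branches agree with $-f_i(v_\al)$. The case $\al\in R^-$ is handled symmetrically.

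Finally, rather than repeat the analysis for $f_i$, I would deduce $\om f_i\om^{-1}=-(-1)^{\bar i}e_i$ from the relation just proved together with $\om^2=P$. Indeed $f_i=-\om e_i\om^{-1}$ yields $\om f_i\om^{-1}=-\om^2 e_i\om^{-2}=-Pe_iP^{-1}$, and because $e_i$ is homogeneous of degree $\bar i$ (Lemma \ref{lem7}(a)) one has $Pe_iP^{-1}=(-1)^{\bar i}e_i$, which is exactly the claim. I expect the main obstacle to be the sign bookkeeping of $(-1)^{\overline\al}$ and $(-1)^{\bar i}$ in the isotropic ($R^{im}$) subcases of the $e_i$ verification, where $e_i$ and $f_i$ are governed by $(\al,\al_i^{\vee})$ rather than by string lengths; the $\om^2=P$ shortcut at least halves this labor by removing the need to redo everything for $f_i$.
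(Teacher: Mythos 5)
Your proposal is correct, and for the core identity $\om e_i\om^{-1}=-f_i$ it follows the same route as the paper: compute $\om^{-1}$ explicitly, then verify on the basis case by case, using $m_i^{\pm}(-\al)=m_i^{\mp}(\al)$, $sgn(-\al)=-sgn(\al)$ and $(-\al,\al_i^{\vee})=-(\al,\al_i^{\vee})$ to convert the coefficient produced by $e_i$ at $v_{-\al}$ into the one demanded by $f_i$ at $v_{\al}$. (Your justification that $\al\in R^+$, $\al\neq\al_i$, $\al-\al_i\in R$ forces $\al-\al_i\in R^+$ is a point the paper uses silently when it writes ``if $\al\in R^+$ then $-\al+\al_i\in -R^+$''; making it explicit via the uniqueness clause in the definition of a base is a small improvement.) Where you genuinely diverge is the second relation: the paper re-runs the entire case analysis for $\om f_i\om^{-1}$, whereas you observe that $\om^2$ is the parity operator $P$ on $M$ and deduce $\om f_i\om^{-1}=-\om^2e_i\om^{-2}=-Pe_iP^{-1}=-(-1)^{\bar i}e_i$ from the homogeneity of $e_i$ (Lemma \ref{lem7}(a)). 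This is a valid and cleaner argument that halves the computation, and it is consistent with (indeed, it explains) the statement $\widetilde{\om}^4=\id_{\g}$ recorded in Remark \ref{rmk1}. You also verify $\om h_i\om^{-1}=-h_i$ explicitly, which the paper's proof omits entirely even though it is part of the statement; your one-line argument for it is correct. The only caveat is that your text is a plan rather than a full verification of the isotropic subcases, but the identities you propose to use there are exactly the ones the paper's computation rests on, so no gap is expected.
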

	\begin{proof}
		Since $\om$ is invertible and $\om^{-1}(u_i)=u_i,~\om^{-1}(v_{\al})=-(-1)^{\bar{\al}}v_{-\al}$ and $\om^{-1}(v_{-\al})=-v_{\al}$, for $\al\in R^+$, we have
		$$\om e_i\om^{-1}(u_j)=\om e_i(u_j)=|(\al_i,\al_j^{\vee})|\om(v_{\al_i})=-|(\al_i,\al_j^{\vee})|v_{-\al_i}=-f_i(u_j)$$ and $$\om f_i\om^{-1}(u_j)=\om f_i(u_j)=|(\al_i,\al_j^{\vee})|\om(v_{-\al_i})=-(-1)^{\bar{i}}|(\al_i,\al_j^{\vee})|v_{\al_i}=-(-1)^{\bar{i}}e_i(u_j).$$ 
		
		Also $$\om e_i\om^{-1}(v_{\al_i})=-(-1)^{\bar{i}}\om e_i(v_{-\al_i})=-(-1)^{\bar{i}}\om((-1)^{\bar{i}}u_i)=-u_i=-f_i(v_{\al_i})$$ and $$\om f_i\om^{-1}(v_{-\al_i})=-\om f_i(v_{\al_i})=-\om(u_i)=-(-1)^{\bar{i}}((-1)^{\bar{i}}u_i)=-(-1)^{\bar{i}}e_i(v_{-\al_i}).$$
		
		Now, we prove the relation case by case. For $\bar{i}=0$, $e_i(v_{\al})=(sgn~\al)m_i^-(\al)v_{\al+\al_i}$ and $f_i(v_{\al})=(sgn~\al)m_i^+(\al)v_{\al-\al_i}$. If $\al\in R^+$, then $-\al+\al_i\in-R^+$ and
		\begin{eqnarray*}
			\om e_i\om^{-1}(v_{\al})&=&-(-1)^{\bar{\al}}\om e_i(v_{-\al})\\&=&-(-1)^{\bar{\al}}(sgn-\al)m_i^-(-\al)\om(v_{-\al+\al_i})\\&=&-(-1)^{\bar{\al}}(sgn-\al)m_i^-(-\al)(-(-1)^{\bar{\al}+\bar{\al_i}}v_{\al-\al_i})\\&=&-(sgn~\al)m_i^+(\al)v_{\al-\al_i}\\&=&-f_i(v_{\al})
		\end{eqnarray*}
		as $sgn(-\al)=-sgn(\al)$ and $m_i^-(-\al)=m_i^+(\al)$. If $\al\in-R^+$, then $-\al+\al_i\in R^+$ and 
		\begin{eqnarray*}
			\om e_i\om^{-1}(v_{\al})&=&-\om e_i(v_{-\al})\\&=&-(sgn-\al)m_i^-(-\al)\om(v_{-\al+\al_i})\\&=&-(sgn-\al)m_i^-(-\al)(-v_{\al-\al_i})\\&=&-(sgn~\al)m_i^+(\al)v_{\al-\al_i}\\&=&-f_i(v_{\al}).
		\end{eqnarray*}
		If $\al\in R^+$, then $-\al-\al_i\in-R^+$ and
		\begin{eqnarray*}
			\om f_i\om^{-1}(v_{\al})&=&-(-1)^{\bar{\al}}\om f_i(v_{-\al})\\&=&-(-1)^{\bar{\al}}(sgn-\al)m_i^+(-\al)\om(v_{-\al-\al_i})\\&=&-(-1)^{\bar{\al}}(sgn-\al)m_i^+(-\al)(-(-1)^{\bar{\al}+\bar{\al_i}}v_{\al+\al_i})\\&=&-(-1)^{\bar{i}}(sgn~\al)m_i^-(\al)v_{\al-\al_i}\\&=&-(-1)^{\bar{i}}e_i(v_{\al})
		\end{eqnarray*}
		as $sgn(-\al)=-sgn(\al)$ and $m_i^+(-\al)=m_i^-(\al)$. If $\al\in-R^+$, then $-\al-\al_i\in R^+$ and 
		\begin{eqnarray*}
			\om f_i\om^{-1}(v_{\al})&=&-\om f_i(v_{-\al})\\&=&-(sgn-\al)m_i^+(-\al)\om(v_{-\al-\al_i})\\&=&-(sgn-\al)m_i^+(-\al)(-v_{\al+\al_i})\\&=&-(-1)^{\bar{i}}(sgn~\al)m_i^-(\al)v_{\al+\al_i}\\&=&-(-1)^{\bar{i}}e_i(v_{\al}).
		\end{eqnarray*}
		For $\bar{i}=1$ and $\al\in R^{re}$, $e_i(v_{\al})=m_i^-(\al)v_{\al+\al_i}$ and $f_i(v_{\al})=(sgn~\al)m_i^+(\al)v_{\al-\al_i}$.
		If $\al\in R^+$, then $-\al+\al_i\in-R^+$ and
		\begin{eqnarray*}
			\om e_i\om^{-1}(v_{\al})&=&-(-1)^{\bar{\al}}\om e_i(v_{-\al})\\&=&-(-1)^{\bar{\al}}m_i^-(-\al)\om(v_{-\al+\al_i})\\&=&-(-1)^{\bar{\al}}m_i^-(-\al)(-(-1)^{\bar{\al}+\bar{\al_i}}v_{\al-\al_i})\\&=&-m_i^+(\al)v_{\al-\al_i}\\&=&-(sgn~\al)f_i(v_{\al})
		\end{eqnarray*}
		as $sgn(\al)=1$ and $m_i^-(-\al)=m_i^+(\al)$. If $\al\in-R^+$, then $-\al+\al_i\in R^+$ and $sgn(\al)=-1$,
		\begin{eqnarray*}
			\om e_i\om^{-1}(v_{\al})&=&-\om e_i(v_{-\al})\\&=&-m_i^-(-\al)\om(v_{-\al+\al_i})\\&=&-m_i^-(-\al)(-v_{\al-\al_i})\\&=&m_i^+(\al)v_{\al-\al_i}\\&=&-(sgn~\al)f_i(v_{\al}).
		\end{eqnarray*}
		If $\al\in R^+$, then $-\al-\al_i\in-R^+$ and
		\begin{eqnarray*}
			\om f_i\om^{-1}(v_{\al})&=&-(-1)^{\bar{\al}}\om f_i(v_{-\al})\\&=&-(-1)^{\bar{\al}}(sgn-\al)m_i^+(-\al)\om(v_{-\al-\al_i})\\&=&-(-1)^{\bar{\al}}(sgn-\al)m_i^+(-\al)(-(-1)^{\bar{\al}+\bar{\al_i}}v_{\al+\al_i})\\&=&-(-1)^{\bar{i}}m_i^-(\al)v_{\al+\al_i}\\&=&-(-1)^{\bar{i}}e_i(v_{\al})
		\end{eqnarray*}
		as $sgn(-\al)=-1$ and $m_i^+(-\al)=m_i^-(\al)$. If $\al\in-R^+$, then $-\al-\al_i\in R^+$ and $sgn(-\al)=1$,
		\begin{eqnarray*}
			\om f_i\om^{-1}(v_{\al})&=&-\om f_i(v_{-\al})\\&=&-(sgn-\al)m_i^+(-\al)\om(v_{-\al-\al_i})\\&=&-(sgn-\al)m_i^+(-\al)(-v_{\al+\al_i})\\&=&-(-1)^{\bar{i}}m_i^-(\al)v_{\al+\al_i}\\&=&-(-1)^{\bar{i}}e_i(v_{\al}).
		\end{eqnarray*}
		For $\bar{i}=1$ and $\al\in R^{im}$, $e_i(v_{\al})=(sgn~\al)(\al,\al_i^{\vee})v_{\al+\al_i}$ and $f_i(v_{\al})=(\al,\al_i^{\vee})v_{\al-\al_i}$.
		If $\al\in R^+$, then $-\al+\al_i\in-R^+$ and
		\begin{eqnarray*}
			\om e_i\om^{-1}(v_{\al})&=&-(-1)^{\bar{\al}}\om e_i(v_{-\al})\\&=&-(-1)^{\bar{\al}}(sgn-\al)(-\al,\al_i^{\vee})\om(v_{-\al+\al_i})\\&=&-(-1)^{\bar{\al}}(sgn-\al)(-\al,\al_i^{\vee})(-(-1)^{\bar{\al}+\bar{\al_i}}v_{\al-\al_i})\\&=&-(\al,\al_i^{\vee})v_{\al-\al_i}\\&=&-f_i(v_{\al})
		\end{eqnarray*}
		as $sgn(-\al)=-1$. If $\al\in-R^+$, then $-\al+\al_i\in R^+$ and $sgn(-\al)=1$,
		\begin{eqnarray*}
			\om e_i\om^{-1}(v_{\al})&=&-\om e_i(v_{-\al})\\&=&-(sgn-\al)(-\al,\al_i^{\vee})\om(v_{-\al+\al_i})\\&=&-(sgn-\al)(-\al,\al_i^{\vee})(-v_{\al-\al_i})\\&=&-(\al,\al_i^{\vee})v_{\al-\al_i}\\&=&-f_i(v_{\al}).
		\end{eqnarray*}
		If $\al\in R^+$, then $-\al-\al_i\in-R^+$ and
		\begin{eqnarray*}
			\om f_i\om^{-1}(v_{\al})&=&-(-1)^{\bar{\al}}\om f_i(v_{-\al})\\&=&-(-1)^{\bar{\al}}(-\al,\al_i^{\vee})\om(v_{-\al-\al_i})\\&=&-(-1)^{\bar{\al}}(-\al,\al_i^{\vee})(-(-1)^{\bar{\al}+\bar{\al_i}}v_{\al+\al_i})\\&=&-(-1)^{\bar{i}}(sgn~\al)(\al,\al_i^{\vee})v_{\al-\al_i}\\&=&-(-1)^{\bar{i}}e_i(v_{\al})
		\end{eqnarray*}
		as $sgn(\al)=1$. If $\al\in-R^+$, then $-\al-\al_i\in R^+$ and $sgn(\al)=-1$,
		\begin{eqnarray*}
			\om f_i\om^{-1}(v_{\al})&=&-\om f_i(v_{-\al})\\&=&-(-\al,\al_i^{\vee})\om(v_{-\al-\al_i})\\&=&-(-\al,\al_i^{\vee})(-v_{\al+\al_i})\\&=&-(-1)^{\bar{i}}(sgn~\al)(\al,\al_i^{\vee})v_{\al+\al_i}\\&=&-(-1)^{\bar{i}}e_i(v_{\al}).
		\end{eqnarray*}
	\end{proof}
	
	\section{Lie superalgebra generated by $e_i,f_i$}\label{sec4}
	Consider the Lie superalgebra $\gl(M)$ consisting of all $\C$-linear maps with the super commutator $[\phi,\psi]=\phi\psi-(-1)^{\overline{\phi}\overline{\psi}}\psi\phi$ for homogeneous linear maps. Let $\g:=\langle e_i,f_i:i\in I\rangle\subseteq\gl(M)$ be the sub Lie superalgebra generated by the homogeneous linear operators along with the relations we have in Lemma \ref{lem7}. Hence $\g$ is the linear span of all Lie monomials of level $n~(n\geq 1)$ and $dim~\g<\infty$.
	
	Clearly, $h_i\in\g$ and let $\h\subset\g$ be the subspace spanned by $h_i,i\in I$, which is an abelian subalgebra having a basis $h_i$ ($i\in I$). Let $\h^*$ be the dual of $\h$ and for $\al\in R$ define $\dot{\al}\in\h^*$ by $\dot{\al}(h_j):=(\al,\al_j^{\vee})$ for all $j\in I$. Let $\dot{R}=\{\dot{\al}:\al\in R\}$ and note that the map $\al\mapsto\dot{\al}$ is linear in $\al$. Since $\al_i~(i\in I)$ spans $R$, hence $\dot{\al_i}$ spans $\dot{R}$. For $\la\in\h^*$, we define $$\g^{\la}:=\{x\in\g:[h,x]=\la(h)x\text{ for all }h\in\h\}.$$ Since $[h_j,e_i]=(\al_i,\al_j^{\vee})e_i=\dot{\al_i}(h_j)e_i$ for all $j\in I$, hence $e_i\in\g^{\dot{\al_i}}$; similarly $f_i\in\g^{-\dot{\al_i}}$ and $h_i\in\g^0:=\{x\in\g:[h,x]=0\text{ for all }h\in\h\}$ for all $i\in I$. 
	
	Now, let $\mathfrak{n}^+:=\langle e_i:i\in I\rangle_{Lie}\text{ and }\mathfrak{n}^-:=\langle f_i:i\in I\rangle_{Lie}$ be subalgebras of $\g$ freely generated by $e_i\text{ and }f_i~(i\in I)$ respectively. Set $Q^+:=\left\{\sum\limits_{i\in I}n_i\dot{\al_i}\in\h^*:n_i\in\Z_{\geq0}\text{ for all }i\in I\right\}\backslash\{0\}$ and $Q^-=-Q^+$. 
	\begin{lemma}
		We have $\mathfrak{n}^{\pm}\subseteq\sum\limits_{\la\in Q^{\pm}}\g^{\la}$.
	\end{lemma}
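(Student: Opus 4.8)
The plan is to induct on the number of generators appearing in a Lie monomial, exploiting that the bracket of two weight vectors is again a weight vector whose weight is the sum. The one structural input is that $\h$ is spanned by the $h_i$, all of degree $\overline{h_i}=0$; hence for homogeneous $x\in\g^{\la}$, $y\in\g^{\mu}$ and any $h\in\h$ the super Jacobi identity carries no sign,
$$[h,[x,y]]=[[h,x],y]+[x,[h,y]]=\la(h)[x,y]+\mu(h)[x,y]=(\la+\mu)(h)[x,y],$$
so that $[\g^{\la},\g^{\mu}]\subseteq\g^{\la+\mu}$. I would also record the elementary fact that the linear map $V\to\h^{*}$, $\be\mapsto\dot{\be}$, is injective: if $\dot{\be}=0$ then $(\be,\al_j^{\vee})=0$ for every $j$, and since each $\al_j^{\vee}$ is a nonzero scalar multiple of $\al_j$ and $\Pi$ spans $V$, nondegeneracy of $(\cdot,\cdot)$ forces $\be=0$.

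Now $\mathfrak{n}^{+}$ is spanned as a vector space by the iterated brackets (Lie monomials) in the $e_i$. I claim each such monomial $w$ in the generators $e_{i_1},\dots,e_{i_k}$ (counted with multiplicity, $k\geq1$) lies in $\g^{\dot{\be}}$ with $\be=\al_{i_1}+\cdots+\al_{i_k}$, by induction on $k$. For $k=1$ this is the established relation $e_i\in\g^{\dot{\al_i}}$. For the step, write $w=[w_1,w_2]$ with $w_1,w_2$ monomials of smaller length lying in $\g^{\dot{\be_1}}$ and $\g^{\dot{\be_2}}$; the bracket rule gives $w\in\g^{\dot{\be_1}+\dot{\be_2}}=\g^{\dot{\be}}$ with $\be=\be_1+\be_2$. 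Since $\Pi$ is linearly independent and $\be$ is a nonempty sum of its elements, $\be\neq0$, whence $\dot{\be}\neq0$ by the injectivity above; moreover $\dot{\be}=\sum_i c_i\dot{\al_i}$ with $c_i\in\Z_{\geq0}$, so $\dot{\be}\in Q^{+}$. Thus every monomial lies in $\sum_{\la\in Q^{+}}\g^{\la}$, and taking spans yields $\mathfrak{n}^{+}\subseteq\sum_{\la\in Q^{+}}\g^{\la}$.

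For $\mathfrak{n}^{-}$ the identical induction applies with $f_i\in\g^{-\dot{\al_i}}$ replacing $e_i$, producing weights in $Q^{-}$; alternatively, since the automorphism $\om$ of Lemma \ref{lem8} sends each $e_i$ to a scalar multiple of $f_i$ and satisfies $\om h_i\om^{-1}=-h_i$, one checks $\om(\g^{\la})=\g^{-\la}$ and $\om(\mathfrak{n}^{+})=\mathfrak{n}^{-}$, transporting the result. The only delicate point is the strict nonvanishing $\dot{\be}\neq0$, which guarantees that the weights land in $Q^{+}$ and not merely in $Q^{+}\cup\{0\}$; this is precisely where nondegeneracy of the form and the spanning property of $\Pi$ enter, and it is the one place I would be careful.
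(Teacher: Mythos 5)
Your proof is correct and follows essentially the same route as the paper: induction on the level of a Lie monomial, with the graded Jacobi identity (sign-free since $\overline{h_i}=\bar 0$) giving $[\g^{\la},\g^{\mu}]\subseteq\g^{\la+\mu}$, and the analogous argument (or the automorphism $\om$) handling $\mathfrak{n}^-$. Your additional check that $\dot{\be}\neq 0$ — so the weight genuinely lies in $Q^{+}$ rather than $Q^{+}\cup\{0\}$ — is a point the paper's proof passes over silently, and it is worth making explicit given that $Q^{+}$ is defined with $0$ removed.
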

	\begin{proof}
		Let $x\in\mathfrak{n}^+$ be a Lie monomial in $\{e_i:i\in I\}$ of level $n$, we show the result by induction on $n$ that $x\in \g^{\la}$ for some $\la\in Q^+$. If $n=1$, then $x=e_i$ for some $i$ and we noted that $e_i\in\g^{\dot{\al_i}}$. If $n\geq 2$ and $x=[y,z]$ a Lie monomial of level $n$, where $y,z$ are Lie monomials of levels lesser than $n$, then by induction $y\in\g^{\mu}\text{ and }z\in\g^{\nu}\text{ where }\mu,\nu\in Q^+$. By graded Jacobi identity, $[h_i,[y,z]]=[[h_i,y],z]+(-1)^{\overline{h_i}\overline{y}}[y,[h_i,z]]=\mu(h_i)[y,z]+\nu(h_i)[y,z]$ for all $i\in I$. Hence $x\in\g^{\la}\text{ where }\la:=\mu+\nu\in Q^+$ and argument for $\mathfrak{n}^-$ is completely analogous.
	\end{proof}
	\begin{rmk}
		Since for any $g\in\g$ we have a linear map on $M$, then $M$ has a $\g$-module structure. If we define $(\al,\al_i^{\vee}):=\al(h_i)$ then we have $h_i(u_j)=0\text{ and }h_i(v_{\al})=\al(h_i)v_{\al}$, hence the homogeneous basis $\{u_i:i\in I\}\cup\{v_{\al}:\al\in R\}$ is a set of $h_i~(i\in I)$ weight vectors and we have a weight space decomposition $M=M^0\oplus\bigoplus\limits_{\al\in R}M^{\al}$ where $M^0=\text{span}_{\C}\{u_i:i\in I\}\text{ and }M^{\al}=\text{span}_{\C}\{v_{\al}:\al\in R\}$. We can observe that the action of $e_k,f_k~(k\in I)$ on this homogeneous basis by, if $v\in M$ is a weight vector with weight $\la\in R\cup\{0\}$, then either $e_k(v)=0~(\text{similarly }f_k(v)=0)$ or $e_k(v)\text{ and }f_k(v)$ are again wight vectors with weights $\la+\al_k\text{ and }\la-\al_k\text{ respectively}$.
		
	\end{rmk}
	
	\begin{lemma}
		We have a direct sum decomposition $\g=\mathfrak{n}^-\oplus\h\oplus\mathfrak{n}^+$.
	\end{lemma}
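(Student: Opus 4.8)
The plan is to prove the two assertions separately: first that $\g=\mathfrak{n}^-+\h+\mathfrak{n}^+$, and then that this sum is direct. For the spanning statement, set $W:=\mathfrak{n}^-+\h+\mathfrak{n}^+$. Since $\g$ is generated by the $e_i$ and $f_i$, it suffices to show that $W$ is stable under $\ad e_i$ and $\ad f_i$ for every $i\in I$: such a subspace is then stable under $\ad\g$, hence an ideal, and being an ideal that contains the generators it must equal $\g$. The containments $[e_i,\mathfrak{n}^+]\subseteq\mathfrak{n}^+$ and $[f_i,\mathfrak{n}^-]\subseteq\mathfrak{n}^-$ are immediate since $\mathfrak{n}^{\pm}$ are subalgebras, while $[e_i,\h]\subseteq\mathfrak{n}^+$ and $[f_i,\h]\subseteq\mathfrak{n}^-$ follow at once from parts (c) and (d) of Lemma \ref{lem7}.

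The only nontrivial containment is $[e_i,\mathfrak{n}^-]\subseteq\h+\mathfrak{n}^-$ (and its mirror $[f_i,\mathfrak{n}^+]\subseteq\h+\mathfrak{n}^+$), which I would establish by induction on the level of a Lie monomial $y\in\mathfrak{n}^-$. The base case $y=f_j$ is exactly relations (e) and (f) of Lemma \ref{lem7}, giving $[e_i,f_j]\in\h$. For the inductive step, write $y=[f_j,z]$ with $z$ of lower level and expand via the graded Jacobi identity, $[e_i,[f_j,z]]=[[e_i,f_j],z]\pm[f_j,[e_i,z]]$; the first summand lies in $[\h,\mathfrak{n}^-]\subseteq\mathfrak{n}^-$ by (d), and the second lies in $[f_j,\h+\mathfrak{n}^-]\subseteq\mathfrak{n}^-$ by (d) together with the closure of $\mathfrak{n}^-$, invoking the inductive hypothesis on $z$. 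Careful bookkeeping of the super-signs appearing in the Jacobi identity is the main technical point, and I expect this inductive closure argument to be the principal obstacle.

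For directness, the key preliminary observation is that the map $\al\mapsto\dot\al$ from $V$ to $\h^*$ is injective: if $\dot\al=0$ then $(\al,\al_j^{\vee})=0$ for all $j$, and since each $\al_j^{\vee}$ is a nonzero scalar multiple of $\al_j$ and the $\al_j$ span $V$, nondegeneracy of $(\cdot,\cdot)$ forces $\al=0$. Consequently the images $\dot\al_1,\dots,\dot\al_n$ of the linearly independent base $\Pi$ are linearly independent in $\h^*$, and this gives at once that $Q^+\cap Q^-=\emptyset$ and $0\notin Q^{\pm}$: any relation $\sum n_i\dot\al_i=-\sum m_i\dot\al_i$ with $n_i,m_i\ge 0$ forces all coefficients to vanish.

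With this in hand I would conclude using the weight decomposition. The subspaces $\g^{\la}$ are simultaneous eigenspaces of the commuting family $\{\ad h_i:i\in I\}$, so those attached to distinct $\la$ are linearly independent and $\sum_{\la}\g^{\la}$ is a direct sum. By the preceding lemma $\mathfrak{n}^{\pm}\subseteq\bigoplus_{\la\in Q^{\pm}}\g^{\la}$, while $\h\subseteq\g^0$. Given $x^-+h+x^+=0$ with $x^{\pm}\in\mathfrak{n}^{\pm}$ and $h\in\h$, I would decompose into weight components and use that $Q^-$, $\{0\}$, $Q^+$ are pairwise disjoint index sets, forcing every weight component, and hence each of $x^-,h,x^+$, to vanish. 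Combined with the spanning step this yields $\g=\mathfrak{n}^-\oplus\h\oplus\mathfrak{n}^+$; once the injectivity of $\al\mapsto\dot\al$ is recorded, this half of the argument is essentially formal.
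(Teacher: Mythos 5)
Your proposal is correct, but the second half differs from the paper's argument in a way worth noting. For the spanning statement the paper simply defers to the argument of Geck (\cite{geck2017construction}, Lemma 4.5), which is exactly the $\ad$-stability induction you spell out: showing $W=\mathfrak{n}^-+\h+\mathfrak{n}^+$ is stable under $\ad e_i$ and $\ad f_i$, with the only nontrivial point being $[e_i,\mathfrak{n}^-]\subseteq\h+\mathfrak{n}^-$ handled by induction on the level of (left-normed) Lie monomials using relations (c)--(f) of Lemma \ref{lem7}. So there you agree with the paper. For directness, however, the paper does not pass through the root-space decomposition of $\g$ at all: it uses that $\g\subseteq\gl(M)$ and evaluates a putative relation $n^-+h+n^+=0$ on the basis vectors of $M$, observing that $n^-(u_i)$, $h(u_i)=0$, $n^+(u_i)$ and likewise $n^-(v_\al)$, $h(v_\al)$, $n^+(v_\al)$ land in weight spaces of $M$ indexed by disjoint sets of weights, forcing each operator to kill every basis vector and hence vanish. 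Your route instead decomposes $\g$ itself into simultaneous $\ad h_i$-eigenspaces and uses that $Q^-$, $\{0\}$, $Q^+$ are pairwise disjoint, which requires the additional (correct, and correctly justified) observation that $\al\mapsto\dot\al$ is injective, so that the $\dot\al_i$ are linearly independent in $\h^*$. Both arguments are valid; the paper's is more elementary in that it only uses the concrete action on $M$ and never needs linear independence of the $\dot\al_i$, while yours isolates a fact (injectivity of $\al\mapsto\dot\al$, hence $0\notin Q^\pm$ and $Q^+\cap Q^-=\emptyset$) that the paper uses implicitly in the subsequent lemma identifying $\h=\g^0$ and $\mathfrak{n}^\pm=\sum_{\la\in Q^\pm}\g^\la$, so recording it explicitly is a genuine improvement in rigor.
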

	\begin{proof}
		Following the arguments as in (\cite{geck2017construction}, Lemma 4.5) and relations from Lemma \ref{lem7}, it is clear that $\g=\mathfrak{n}^-+\h+\mathfrak{n}^+$. Suppose that $g=n^-+h+n^+=0\text{ where }n^{\pm}\in\mathfrak{n}^{\pm}\text{ and }h\in\h$. Then $0=g(u_i)=n^-(u_i)+h(u_i)+n^+(u_i)=n^-(u_i)+n^+(u_i)$. Then $n^-(u_i)\in M^{-\al}$ and $n^+(u_i)\in M^{\be}$ for some $\al,\be\in R^+$. Hence by the decomposition of $M$, we have $n^{\pm}(u_i)=0$ for all $i\in I$ and all $h\in\h$. Now for $\al\in R^+$, $0=g(v_{\al})=n^-(v_{\al})+h(v_{\al})+n^+(v_{\al})\in M^{\al'}\oplus M^{\al}\oplus M^{\al''}\text{ where }ht(\al')\preceq ht(\al)\preceq ht(\al'')$. Again by the decomposition $n^{\pm}(v_{\al})=0\text{ and }h(v_{\al})=0$, similarly we have this for $\al\in R^-$. Thus we have $n^{\pm}=0\text{ and }h=0$.
	\end{proof}
	\begin{lemma}
		The abelian subalgebra $\h$ spanned by even homogeneous  diagonalizable operators $h_i$ is Cartan subalgebra of $\g$, and we have decomposition $\g=\h\oplus\bigoplus\limits_{\al\in R}\g^{\al}$ where $dim~\g^{\al}=1$ for $\al\in R$.
	\end{lemma}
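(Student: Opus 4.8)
The plan is to derive the whole statement from the triangular decomposition $\g=\mathfrak{n}^-\oplus\h\oplus\mathfrak{n}^+$ established just above, upgraded to an $\ad\h$-eigenspace decomposition. First I would record that each $h_i$ is diagonal in the weight basis $\{u_j\}\cup\{v_\al\}$ of $M$, so each $\ad h_i$ is a diagonalizable operator on $\gl(M)$; since $[h_i,h_j]=0$ these commute, and they preserve $\g$, so $\g$ splits as the direct sum of their simultaneous eigenspaces, $\g=\bigoplus_{\la\in\h^*}\g^{\la}$. Combining this with $\h\subseteq\g^0$ and the inclusions $\mathfrak{n}^{\pm}\subseteq\sum_{\la\in Q^{\pm}}\g^{\la}$, the only weights that can occur lie in $Q^+\cup\{0\}\cup Q^-$.

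Next I would pin down the zero weight space. Writing any $x\in\g^0$ through the triangular decomposition as $n^-+h+n^+$, the components $n^{\pm}$ lie in weight spaces $\g^{\la}$ with $\la\in Q^{\pm}$, and $0\notin Q^{\pm}$ by construction; comparing weight components forces $n^{\pm}=0$, whence $\g^0=\h$. I would also note that $\al\mapsto\dot\al$ is injective on $R$, since $(\al-\be,\al_j^{\vee})=0$ for all $j$ together with the fact that the $\al_j^{\vee}$ span $V$ and the nondegeneracy of $(\cdot,\cdot)$ give $\al=\be$; this lets me identify $\g^{\dot\al}$ with $\g^{\al}$ and treat the occurring nonzero weights as elements of $\dot R$.

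The heart of the argument is to show that the nonzero weights are exactly $\dot\al$ for $\al\in R$ and that $\dim\g^{\dot\al}=1$; I expect the \emph{upper} bound to be the principal obstacle. For existence, given $\al\in R^+$ I would realize $\al$ by successively adding simple roots while staying inside $R$ and take the corresponding iterated bracket of the $e_i$, checking it is nonzero by evaluating on $M$ (e.g.\ tracking that the leading $u_{i_1}$ is sent to a nonzero multiple of $v_{\al}$); the coefficients $m_i^{\pm}$ and signs in the definition of $e_i$ are arranged precisely so that no such evaluation vanishes. For the upper bound I would induct on $\hgt(\al)$: the weight-$\dot\al$ part of $\mathfrak{n}^+$ is spanned by the brackets $[e_i,e_{\al-\al_i}]$ with $\al-\al_i\in R^+$, each unique up to scalar by induction, and the task is to show all these coincide up to a common scalar. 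This is exactly where the combinatorial input enters: Lemma \ref{lem6} (the parallelogram/square condition) together with the root-string identities of Lemmas \ref{lem12}, \ref{lem13} and \ref{lem14} force the two routes around each square to agree, yielding $\dim\g^{\dot\al}\le 1$.

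Finally I would conclude that $\h$ is a Cartan subalgebra. It is abelian, hence nilpotent, so only self-normalization remains. For $x\in N_{\g}(\h)$ I decompose $x=\sum_{\la}x_{\la}$ along the weight spaces; from $[h,x_{\la}]=\la(h)x_{\la}$ and $[x,\h]\subseteq\h=\g^0$ I obtain $\la(h)x_{\la}\in\g^0$ for every $h\in\h$, and choosing $h$ with $\la(h)\neq0$ forces $x_{\la}=0$ whenever $\la\neq0$. Thus $N_{\g}(\h)=\g^0=\h$, and a nilpotent self-normalizing subalgebra is by definition a Cartan subalgebra, which together with the decomposition $\g=\h\oplus\bigoplus_{\al\in R}\g^{\al}$ and $\dim\g^{\al}=1$ completes the proof.
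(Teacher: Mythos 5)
Your proposal is correct in outline but takes a genuinely different route from the paper's. The paper obtains the Cartan property exactly as you do, from $\h=\g^0$ via the triangular decomposition, but for the root-space decomposition it argues abstractly: the root system $R'$ of $\g$ relative to $\h$ has $\dot{\Pi}$ as a base, and since a finite locally finite root supersystem is recoverable from its base (citing \cite{yousofzadeh2016extended}) and $\dot{\Pi}\cong\Pi$, it concludes $R'=\dot{R}\cong R$. You instead transplant Geck's hands-on method: realize each $\dot{\al}$ by an iterated bracket of the $e_i$ checked to be nonzero on $M$, and bound $\dim\g^{\dot{\al}}$ by induction on height using the square Lemma \ref{lem6}. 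Your route is more self-contained and actually engages with $\dim\g^{\al}=1$, which the paper's citation-based argument passes over (identifying the \emph{set} $R'$ says nothing about multiplicities). The price is that your key inductive step is only gestured at: to show that $[e_i,y]$ and $[e_j,y']$ in $\g^{\dot{\al}}$ are proportional, and that $[e_i,\g^{\dot{\be}}]=0$ when $\be+\al_i\notin R$, you cannot simply invoke Lemmas \ref{lem12}--\ref{lem14} --- in the paper those serve only to verify $[e_i,f_j]=0$ for $i\neq j$. You must either apply $\ad f_k$ to the competing brackets and then argue that no nonzero element of $\g^{\dot{\al}}$ with $\al\in R^+$ is annihilated by every $f_k$ (which needs the faithful action on $M$ or an ideal argument, and must be arranged so as not to presuppose the simplicity proved later), or else compute the action of the competing monomials on $M$ directly. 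With that step filled in, your proof is complete and in fact supplies a detail the paper's own proof leaves implicit.
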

	\begin{proof}
		Since $\h\subseteq\g^0\text{ and }\mathfrak{n}^{\pm}\subseteq\bigoplus\limits_{\la\in Q^{\pm}}\g^{\la}$. By above lemma we deduce that $\h=\g^0\text{ and }\mathfrak{n}^{\pm}=\sum\limits_{\la\in Q^{\pm}}\g^{\la}$, turns that $\h$ is Cartan subalgebra of $\g$. Let $R'$ be the root system of $\g$ with respect to $\h$, then $R'\subseteq Q^{\pm}$. Since $\dot{\al_i}\in R'$ which spans $R'$, we deduce that $\dot{\Pi}:=\{\dot{\al_i}:i\in I\}$ is a set of simple roots for $R'$. Since $R$ is finite locally finite root supersystem and by \cite{yousofzadeh2016extended} we can recover $R$ from $\Pi$. Also $\dot{\Pi}\cong\Pi$, hence the root system  recovered from $\dot{\Pi}$ is $\dot{R}\cong R$ which the root system of $\g$ and so we have the decomposition.
	\end{proof}
	Now we have a root space decomposition of $\g$ with respect to the Cartan subalgebra $\h$ whose root system is isomorphic to the irreducible root system which we start. We define grading of the Lie superalgebra $\g$ by $\g_{\bar{0}}=\h\oplus\bigoplus\limits_{\al\in R_{\bar{0}}}$ and $\g_{\bar{1}}=\bigoplus\limits_{\al\in R_{\bar{1}}}$. Also as a $\g$-module both $\g$ (via adjoint representation) and $M$ has same set of weights $R$.
	\begin{rmk}\label{rmk1}
	From Lemma \ref{lem8} we have an Automorphism $\widetilde{\om}:\g\rightarrow\g$ by $g\mapsto\om g\om^{-1}$, which satisfies $\widetilde{\om}^2\restriction{\g_{\bar{0}}}=id_{\g_{\bar{0}}}$ and $\widetilde{\om}^2\restriction{\g_{\bar{1}}}=-id_{\g_{\bar{1}}}$, hence $\widetilde{\om}^4=id_{\g}$. 	
	\end{rmk}
	\begin{lemma}\label{lem9}
		We show $\g_{\bar{1}}$ is faithful $\g_{\bar{0}}$-module under adjoint action.
	\end{lemma}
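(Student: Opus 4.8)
The plan is to show that the kernel $K:=\{x\in\g_{\bar0}:[x,\g_{\bar1}]=0\}$ of the adjoint action is zero. First I would record that $K$ is stable under $\ad\h$: for $h\in\h$, $k\in K$ and $z\in\g_{\bar1}$ the graded Jacobi identity gives $[[h,k],z]=[h,[k,z]]-[k,[h,z]]=0$, since $[k,z]=0$ and $[h,z]\in\g_{\bar1}$. Hence $K$ is a sum of $\ad\h$-weight spaces, and as $K\subseteq\g_{\bar0}=\h\oplus\bigoplus_{\al\in R_{\bar0}}\g^{\al}$ the problem splits as $K=(K\cap\h)\oplus\bigoplus_{\al\in R_{\bar0}}(K\cap\g^{\al})$, so I treat the two kinds of summand separately.

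The Cartan part is immediate once one knows that the odd roots span $V$. Indeed, if $h\in K\cap\h$ then for every $\be\in R_{\bar1}$ one has $0=[h,x_\be]=\dot{\be}(h)x_\be$, so $\dot{\be}(h)=0$ for all odd $\be$; under the linear isomorphism $V\xrightarrow{\sim}\h^*,\ \al\mapsto\dot\al$, this says $h$ is orthogonal to $\vspan R_{\bar1}=V$, forcing $h=0$.

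For a root-space summand, fix $\al\in R_{\bar0}$ and $0\neq x_\al\in\g^{\al}$; the point is to produce an odd root $\be$ with $\al+\be\in R$ and $[x_\al,x_\be]\neq0$. Existence of such a $\be$ again uses that odd roots span: if $\al\pm\be\notin R$ for every $\be\in R_{\bar1}$, then each such $\al$-string is trivial, so $(\be,\al^\vee)=m_\al^-(\be)-m_\al^+(\be)=0$, whence $\al\perp\vspan R_{\bar1}=V$ and $\al=0$, a contradiction; replacing $\be$ by $-\be$ if necessary we get $\al+\be\in R$. To see the bracket is nonzero I would invoke the even $\fsl_2$-triple $(e_\al,h_\al,f_\al)$ attached to the real root $\al$ (Lemma \ref{lem7}(e)): the $\al$-string through $\be$ is an unbroken string of one-dimensional root spaces, hence an irreducible $\fsl_2$-module under $\ad$, on which $\ad x_\al$ is the raising operator and therefore annihilates only the top of the string. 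As $\al+\be\in R$, the vector $x_\be$ is not at the top, so $[x_\al,x_\be]=\ad x_\al(x_\be)\neq0$ and $x_\al\notin K$. Thus $K\cap\g^{\al}=0$, giving $K=0$ and the faithfulness of $\g_{\bar1}$.

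The one external input is the root-system fact that the isotropic/odd roots span $V$, which is where the main work sits. I would establish it uniformly from irreducibility: every even real reflection $r_\ga$ is an isometry and satisfies $2\be\in R\Leftrightarrow 2r_\ga(\be)\in R$, so it preserves parity, $r_\ga(R_{\bar1})=R_{\bar1}$, and $U:=\vspan R_{\bar1}$ is stable under all $r_\ga$. Consequently each even root lies in $U$ or in $U^\perp$; if $U\neq V$ this exhibits $R$ as an orthogonal union $(R\cap U)\cup(R\cap U^\perp)$ with $R\cap U\supseteq R_{\bar1}\neq\emptyset$, contradicting irreducibility (and otherwise $R\subseteq U$ forces $U=V$ since $R$ spans). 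Alternatively the span statement can be read off case-by-case from the explicit bases used throughout. The finite-dimensional $\fsl_2$-representation theory on root strings is routine; the genuine obstacle is this reduction to irreducibility, where one must be slightly careful that reflections by even roots really do preserve the odd roots, including the non-isotropic odd roots occurring in $B(m,n)$ and $G(3)$.
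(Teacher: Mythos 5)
Your proposal takes a genuinely different route from the paper's. The paper argues element by element: for each generator $h_i,e_i,f_i$ of $\g_{\bar0}$ it exhibits an explicit odd element $y$ (an iterated bracket $[e_{j_k},\cdots[e_{j_1},e_{i_0}]\cdots]$ anchored at the unique simple isotropic root $\al_{i_0}$), certifying $[x,y]\neq0$ by evaluating on the module vector $u_{i_0}$ in $\gl(M)$; it then treats a general Lie monomial $x\in\g^{\al}$, $\al\in R_{\bar0}$, by citing Gorelik for an isotropic $\be$ with $(\al,\be)\neq0$. Your reduction --- showing the kernel $K$ is $\ad\h$-stable and therefore splits as $(K\cap\h)\oplus\bigoplus_{\al}(K\cap\g^{\al})$ --- is cleaner and in fact more complete than the paper's, which only ever tests weight vectors and never addresses arbitrary linear combinations; your treatment of the Cartan summand and your derivation of ``odd roots span $V$'' from irreducibility (in place of the paper's choice of a distinguished simple isotropic root and case inspection) are both sound.

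The gap is in the root-space case, at exactly the point where the paper is also weakest: the non-vanishing of $[x_\al,x_\be]$. You invoke the even $\fsl_2$-triple $(e_\al,h_\al,f_\al)$ attached to $\al$, citing Lemma \ref{lem7}(e), but that lemma only produces such triples for the \emph{simple} roots $\al_i$; for a general even root $\al$ the existence of $f_\al\in\g^{-\al}$ with $[e_\al,f_\al]$ a nonzero element of $\h$ acting as $2$ on $\g^{\al}$ is established nowhere in the paper, and it is precisely what makes the $\al$-string of one-dimensional root spaces an irreducible $\fsl_2$-module on which $\ad x_\al$ kills only the top. The fact is true and standard for these algebras, and repairable (e.g.\ by producing the triple from the root space decomposition together with the faithful action on $M$, or by transporting a simple triple along the Weyl group --- neither of which is developed here), but as written the step is an assertion rather than a proof. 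To be fair, the paper's own proof commits the same sin in its general case, simply declaring $[x,y]\in\g^{\al\pm\be}$ to be nonzero; for the generators it avoids this by computing $[x,y](u_{i_0})$ explicitly. If you want to stay within the paper's toolkit, the cleanest fix is to imitate that device and show $[x_\al,x_\be]$ acts nontrivially on a suitable weight vector of $M$.
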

	\begin{proof}
		 First fix the index of unique  simple isotropic root by $i_0\in I$, hence $e_{i_o},f_{i_0}\in\g_{\bar{1}}$. We prove this lemma by contrapositive method, for this we have to show for each non-zero $x\in\g_{\bar{0}}$ there exist non-zero $y\in\g_{\bar{1}}$ such that $[x,y]\neq0$. First we show this for generators of $\g_{\bar{0}}$. For $x=h_{i_0}\in\g_{\bar{0}}$, take $y=[e_{i_0},e_i]\in\g_{\bar{1}}$ with $i\in\{i_0-1,i_0+1\}$. Then $$[h_{i_0},[e_{i_0},e_i]]=[[h_{i_0},e_{i_0}],e_i]+[e_{i_0},[h_{i_0},e_i]]=(\al_i,\al_{i_0})[e_{i_0},e_i]$$ and $$[e_{i_0},e_i](u_{i_0})=|(\al_i,\al_{i_0})|e_{i_0}(v_{\al_i})=|(\al_i,\al_{i_0})|m_{i_0}^-(\al_i)v_{\al_i+\al_{i_0}}\neq0$$ as $(\al_i,\al_{i_0})\neq0\text{ and }\al_i+\al_{i_0}\in R$.
		 
		 For $x=h_i$ with $i\neq i_0\text{ and }i\in\{i_0-1,i_0+1\}$, take $y=e_{i_0}$ and we have $[h_i,e_{i_0}]=(\al_{i_0},\al_i)e_{i_0}$. Since $(\al_{i_0},\al_i)\neq0$ and $e_{i_0}(u_i)=|(\al_{i_0},\al_i^{\vee})|v_{\al_{i_0}}\neq0$, hence $[h_i,e_{i_0}]\neq0$. If $i\notin\{i_0-1,i_0+1\}$, we find $y$ sequentially. Let $j_i\in I\backslash\{i_0\}$ such that $(\al_{i_0},\al_{j_1})\neq0$ and if $(\al_i,\al_{j_1})\neq0$, then take $y=[e_{j_1},e_{i_0}]\in\g_{\bar{1}}$. Since $y(u_{i_0})\neq0$ and so $[h_i,[e_{j_1},e_{i_0}]]=(\al_{j_1},\al_i^{\vee})[e_{j_1},e_{i_0}]\neq0$ as $(\al_{i_0},\al_i)=0$. Suppose $(\al_i,\al_{j_1})=0$, let $j_2\in I\backslash\{i_0,j_1\}$ such that $(\al_{j_1},\al_{j_2})\neq0$ and if $(\al_i,\al_{j_2})\neq0$, then take $y=[e_{j_2},[e_{j_1},e_{i_0}]]\in\g_{\bar{1}}$. Again $y(u_{i_0})\neq0$ and so $[x,y]\neq0$ as $x(u_{i_0})=0$. If not, having choose $j_3,j_4,\ldots,j_k$ as above, since $I$ is finite and let $j_{k+1}\in I\backslash\{i_0,j_1,\ldots,\j_k\}$ such that $(\al_{j_k},\al_{j_{k+1}})\neq0$. Then $(\al_i,\al_{j_{k+1}})\neq0$ and take $y=[e_{j_{k+1}},\cdots[e_{j_2},[e_{j_1},e_{i_0}]]\cdots]\in\g_{\bar{1}}$, clearly $y(u_{i_0})\neq0$ and $[x,y]\neq0$ as mentioned above.
		 
		 Now for $x=e_i\in\g_{\bar{0}}~(i\neq i_0)$ with $i\in\{i_0-1,i_0+1\}$, take $y=e_{i_0}\in\g_{\bar{1}}$ then $[e_i,e_{i_0}](u_{i_0})\neq0$. If $i\notin\{i_0-1,i_0+1\}$, we can find non-zero $y=[e_{j_{k}},\cdots[e_{j_2},[e_{j_1},e_{i_0}]]\cdots]\in\g_{\bar{1}}$ sequentially as above so that $[x,y]\neq0$. Since, from (Rmk. \ref{rmk1}) we have an automorphism on $\g$ such that $\widetilde{\om}(e_i)=-f_i,\widetilde{\om}(f_i)=-(-1)^{\bar{i}}e_i\text{ and }\widetilde{\om}(h_i)=-h_i$, hence by a completely analogous argument, we can show the result for $f_i\in\g_{\bar{0}}~(i\neq i_0)$.
		 
		 Now by induction we show the result for any non-zero Lie monomial of level $n>1$ in $\g_{\bar{0}}$. Let $x\in\g_{\bar{0}}$ be a non-zero Lie monomial of level $n>1$, then $x\in \g^{\al}$ for some $\al\in R_{\bar{0}}$. Since from (\cite{gorelik2017generalized}, Lemma 2.2) there exist an $\be\in R^{im}$ such that $(\al,\be)\neq 0$  and so $\al\pm\be\in R$. Hence there is a non-zero $y\in\g^{\be}\subseteq\g_{\bar{1}}$ such that $[x,y]\in\g^{\al\pm\be}$ is non-zero and we have the result.
	\end{proof}
	\begin{lemma}\label{lem10}
		We show the $\g_{\bar{0}}$-module $\g_{\bar{1}}$ is either irreducible or any non-zero submodule $\mathfrak{a}$ of $\g_{\bar{1}}$ be such that $[\g_{\bar{1}},\mathfrak{a}]=\g_{\bar{0}}$.
	\end{lemma}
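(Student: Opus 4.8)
The plan is to turn the module-theoretic dichotomy into an ideal-chasing argument inside $\g_{\bar 0}$. First I would record that, since $\h\subseteq\g_{\bar 0}$ acts diagonally on $\g_{\bar 1}=\bigoplus_{\al\in R_{\bar 1}}\g^{\al}$ with $\dim\g^{\al}=1$, every nonzero $\g_{\bar 0}$-submodule $\mathfrak{a}$ is a sum of root spaces, say $\mathfrak{a}=\bigoplus_{\al\in S}\g^{\al}$ for some nonempty $S\subseteq R_{\bar 1}$. If $\g_{\bar 1}$ is already irreducible, the first alternative of the lemma holds and there is nothing to do; so I assume $\g_{\bar 1}$ is reducible and aim to prove $[\g_{\bar 1},\mathfrak{a}]=\g_{\bar 0}$ for every such $\mathfrak{a}$. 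Since any nonzero submodule contains a minimal (hence irreducible) one $\mathfrak{a}_0$, and $[\g_{\bar 1},\mathfrak{a}]\supseteq[\g_{\bar 1},\mathfrak{a}_0]$, it suffices to treat $\mathfrak{a}$ irreducible.

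Next I would set $\mathfrak{b}:=[\g_{\bar 1},\mathfrak{a}]$ and check that $\mathfrak{b}$ is an ideal of $\g_{\bar 0}$: for $x\in\g_{\bar 0}$, $y\in\g_{\bar 1}$ and $a\in\mathfrak{a}$, the Jacobi identity gives $[x,[y,a]]=[[x,y],a]+[y,[x,a]]$, and both terms lie in $[\g_{\bar 1},\mathfrak{a}]$ because $[x,y]\in\g_{\bar 1}$ and $[x,a]\in\mathfrak{a}$. The core of the argument is to feed the Cartan subalgebra into $\mathfrak{b}$. Since $R=-R$, for each $\al\in S$ we have $-\al\in R_{\bar 1}$, so $\g^{-\al}\subseteq\g_{\bar 1}$ and the one-dimensional space $[\g^{\al},\g^{-\al}]\subseteq\g^{0}=\h$ lies in $\mathfrak{b}$. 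Using the explicit action of the root vectors on $M$ (so that $[\g^{\al},\g^{-\al}]$ acts nontrivially on some $v_{\be}$, as in Lemma \ref{lem7}(e) for the simple isotropic root), I would argue this space is nonzero, spanned by some $h_{\al}\in\h$; thus $\mathfrak{b}$ contains a nonzero Cartan element for every $\al\in S$.

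Once I know that the $h_{\al}$ with $\al\in S$ span all of $\h$, the rest is formal: given any even root $\ga\in R_{\bar 0}$, choose $h\in\h\subseteq\mathfrak{b}$ with $\ga(h)\neq 0$ (possible because $\ga\neq 0$ and the form is non-degenerate); then for $0\neq y\in\g^{\ga}$ we obtain $[h,y]=\ga(h)\,y\in\mathfrak{b}$, whence $\g^{\ga}\subseteq\mathfrak{b}$. Summing over all even roots gives $\mathfrak{b}\supseteq\h\oplus\bigoplus_{\ga\in R_{\bar 0}}\g^{\ga}=\g_{\bar 0}$, and the reverse inclusion is obvious, so $[\g_{\bar 1},\mathfrak{a}]=\g_{\bar 0}$.

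The main obstacle is precisely the spanning claim, that the coroots $h_{\al}$ ($\al\in S$) span $\h$, equivalently that the weight set $S$ spans $V$: faithfulness of $\g_{\bar 0}$ on the whole of $\g_{\bar 1}$ (Lemma \ref{lem9}) does not by itself force this on a single constituent. I expect to settle it by observing that the weights of the irreducible $\g_{\bar 0}$-module $\mathfrak{a}$ form one orbit, whose pairwise differences realize even roots; these differences span $\vspan_{\C}R_{\bar 0}$, and the remaining direction of $V$ is supplied by any single weight of $S$, giving $\vspan_{\C}S=V$ by irreducibility and non-degeneracy of $R$. As a concrete fallback, reducibility of the $\g_{\bar 0}$-module $\g_{\bar 1}$ occurs only for the series $A(m,n)$ and $C(n)$, where one writes $\g_{\bar 1}=\g_{\bar 1}^{+}\oplus\g_{\bar 1}^{-}$ with $[\g_{\bar 1}^{\pm},\g_{\bar 1}^{\pm}]=0$ by weight reasons, and verifies the spanning together with $[\g_{\bar 1}^{-},\g_{\bar 1}^{+}]=\g_{\bar 0}$ directly from the explicit root data.
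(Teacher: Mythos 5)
The paper gives no proof of Lemma \ref{lem10} at all --- the statement is followed immediately by the remark that uses it --- so there is no argument of the authors to compare yours against; your proposal has to stand on its own. Its skeleton is sound and is the natural one: since $\h$ acts on $\g_{\bar 1}$ diagonally with pairwise distinct weights and one-dimensional weight spaces, any nonzero submodule $\mathfrak{a}$ is a sum of root spaces $\bigoplus_{\al\in S}\g^{\al}$; the space $[\g_{\bar 1},\mathfrak{a}]$ is an ideal of $\g_{\bar 0}$ by the super Jacobi identity; and once it contains enough of $\h$ it swallows every even root space. Two points remain genuinely open in your write-up, and you correctly flag the second. First, you need $[\g^{\al},\g^{-\al}]\neq 0$ for \emph{every} $\al\in R_{\bar 1}$, not only for the simple isotropic root where the bracket equals $h_{i_0}$; for a non-simple $\al$ this requires either an invariant-form argument or an explicit computation on $M$, neither of which is supplied. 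Second, the spanning claim: your general assertion that the weight differences of an irreducible constituent span $\vspan_{\C}R_{\bar 0}$ is not automatic (an irreducible module can a priori have too few weights), and really does need the structural input that reducibility of the $\g_{\bar 0}$-module $\g_{\bar 1}$ occurs only in the type I cases $A(m,n)$ and $C(n)$, where $\g_{\bar 1}=\g_{\bar 1}^{+}\oplus\g_{\bar 1}^{-}$ with exactly two non-isomorphic irreducible summands whose weight sets each span $V$. With that input your fallback closes the argument completely, but it imports the Kac--Serganova classification rather than staying inside the paper's self-contained construction. In short: your strategy is correct, and once the two verifications above are carried out (case by case, or via the quiver diagrams of Section \ref{sec5}) it yields a complete proof of a statement the paper leaves unproved.
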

	\begin{rmk}
		By above lemma, for any $\g_{\bar{0}}$-submodule $\mathfrak{a}$ of $\g_{\bar{1}}$ we have $\g_{\bar{0}}=[\g_{\bar{1}},\mathfrak{a}]\subseteq[\g_{\bar{1}},\g_{\bar{1}}]\subseteq\g_{\bar{0}}$, hence $[\g_{\bar{1}},\g_{\bar{1}}]=\g_{\bar{0}}$.
	\end{rmk}
	\begin{lemma}\label{lem11}
		We show that $[\g_{\bar{0}},\g_{\bar{1}}]=\g_{\bar{1}}$.
	\end{lemma}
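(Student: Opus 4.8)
My plan is to prove the two inclusions separately, the forward one being immediate from the grading and the reverse one coming essentially for free from the action of the Cartan subalgebra $\h$.

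First, the inclusion $[\g_{\bar 0},\g_{\bar 1}]\subseteq\g_{\bar 1}$ is nothing but the compatibility of the bracket with the $\Z_2$-grading: a super bracket of an even element with an odd one is odd. So the whole problem reduces to showing $\g_{\bar 1}\subseteq[\g_{\bar 0},\g_{\bar 1}]$.

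For the reverse inclusion I would exploit that $\h\subseteq\g_{\bar 0}$ together with the root space decomposition $\g_{\bar 1}=\bigoplus_{\al\in R_{\bar 1}}\g^{\al}$, where each $\g^{\al}$ is one-dimensional. Fix $\al\in R_{\bar 1}$ and let $x$ span $\g^{\al}$; then $[h,x]=\dot{\al}(h)\,x$ for all $h\in\h$. The key (and really the only) point to check is that $\dot{\al}\neq 0$. Indeed, if $\dot{\al}(h_i)=(\al,\al_i^{\vee})=0$ for every $i\in I$, then since $\al_i^{\vee}$ is a non-zero scalar multiple of $\al_i$ (whether $\al_i\in R^{re}$ or $\al_i\in R^{im}$) we would get $(\al,\al_i)=0$ for all $i$; as $\Pi$ spans $V$ this forces $(\al,v)=0$ for all $v\in V$, contradicting non-degeneracy of $(\cdot,\cdot)$ and the fact that $\al\neq 0$. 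Hence there is $h\in\h$ with $\dot{\al}(h)\neq 0$, and then $x=\dot{\al}(h)^{-1}[h,x]\in[\h,\g^{\al}]\subseteq[\g_{\bar 0},\g_{\bar 1}]$.

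Running this over all $\al\in R_{\bar 1}$ and using that the $\g^{\al}$ exhaust $\g_{\bar 1}$ gives $\g_{\bar 1}=[\h,\g_{\bar 1}]\subseteq[\g_{\bar 0},\g_{\bar 1}]$, which, combined with the first inclusion, yields the claimed equality. I expect no serious obstacle: the argument is essentially immediate once the root space decomposition is in hand, and the only genuine verification is that no non-zero (odd) root is orthogonal to the entire base, which is exactly where non-degeneracy of the form enters. It is worth noting that this simultaneously shows $[\g_{\bar 0},\g_{\bar 1}]$ is a non-zero $\g_{\bar 0}$-submodule of $\g_{\bar 1}$, so one could alternatively try to finish via the irreducibility dichotomy of Lemma \ref{lem10}; but since the direct computation above already produces every odd root vector, invoking that machinery is unnecessary.
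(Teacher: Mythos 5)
Your proposal is correct and follows essentially the same route as the paper: the forward inclusion from the $\Z_2$-grading, and the reverse inclusion from $[\h,\g^{\al}]=\g^{\al}$ applied across the decomposition $\g_{\bar 1}=\bigoplus_{\al\in R_{\bar 1}}\g^{\al}$. The only difference is that you explicitly justify $\dot{\al}\neq 0$ via non-degeneracy of the form, a point the paper's one-line proof leaves implicit.
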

\begin{proof}
	Since $[\g_{\bar{0}},\g_{\bar{1}}]\subseteq\g_{\bar{1}}$ and $\g_{\bar{1}}=\bigoplus\limits_{\al\in R_{\bar{1}}}\g^{\al}$. Also $[\h,\g^{\al}]=\g^{\al}$, hence $\g_{\bar{1}}\subseteq[\g_{\bar{0}},\g_{\bar{1}}]$.
\end{proof}
\begin{lemma}
	The Lie superalgebra $\g$ is simple.
\end{lemma}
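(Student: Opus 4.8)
The plan is to deploy the classical Kac-type simplicity criterion for $\Z_2$-graded Lie superalgebras. Since $R$ carries isotropic roots we have $\g_{\bar 1}=\bigoplus_{\al\in R_{\bar 1}}\g^{\al}\neq 0$, and all the structural input needed is already packaged in the preceding statements: $\g_{\bar 1}$ is a faithful $\g_{\bar 0}$-module (Lemma \ref{lem9}), the dichotomy for nonzero $\g_{\bar 0}$-submodules of $\g_{\bar 1}$ (Lemma \ref{lem10}), together with the bracket identities $[\g_{\bar 1},\g_{\bar 1}]=\g_{\bar 0}$ and $[\g_{\bar 0},\g_{\bar 1}]=\g_{\bar 1}$ (Lemma \ref{lem11}). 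So the theorem should be a short assembly rather than a fresh computation.

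First I would reduce to graded ideals. Let $\mathfrak i$ be a nonzero ideal of $\g$. Because the $\ad h_i$ ($i\in I$) are commuting diagonalizable operators whose joint eigenspace decomposition is exactly $\g=\h\oplus\bigoplus_{\al\in R}\g^{\al}$, any $\ad\h$-stable subspace, in particular $\mathfrak i$, splits as $\mathfrak i=(\mathfrak i\cap\h)\oplus\bigoplus_{\al\in R}(\mathfrak i\cap\g^{\al})$. Since $\dim\g^{\al}=1$, each $\mathfrak i\cap\g^{\al}$ is $0$ or $\g^{\al}$; as every $\g^{\al}$ is homogeneous and $\h$ is even, this shows $\mathfrak i=\mathfrak i_{\bar 0}\oplus\mathfrak i_{\bar 1}$ is a graded ideal, with $\mathfrak i_{\bar 1}=\mathfrak i\cap\g_{\bar 1}$ a $\g_{\bar 0}$-submodule of $\g_{\bar 1}$.

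Next I would run the dichotomy on $\mathfrak i_{\bar 1}$. If $\mathfrak i_{\bar 1}=0$, then $\mathfrak i=\mathfrak i_{\bar 0}\subseteq\g_{\bar 0}$ and $[\mathfrak i_{\bar 0},\g_{\bar 1}]\subseteq\mathfrak i\cap\g_{\bar 1}=0$, so $\mathfrak i_{\bar 0}$ annihilates $\g_{\bar 1}$; faithfulness (Lemma \ref{lem9}) forces $\mathfrak i_{\bar 0}=0$, contradicting $\mathfrak i\neq 0$. If instead $\mathfrak i_{\bar 1}\neq 0$, then Lemma \ref{lem10} gives two possibilities, both yielding $\g_{\bar 0}\subseteq\mathfrak i$: when $\g_{\bar 1}$ is irreducible we get $\mathfrak i_{\bar 1}=\g_{\bar 1}$, whence $\g_{\bar 0}=[\g_{\bar 1},\g_{\bar 1}]=[\g_{\bar 1},\mathfrak i_{\bar 1}]\subseteq\mathfrak i$; otherwise Lemma \ref{lem10} directly gives $\g_{\bar 0}=[\g_{\bar 1},\mathfrak i_{\bar 1}]\subseteq\mathfrak i$. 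Once $\g_{\bar 0}\subseteq\mathfrak i$, Lemma \ref{lem11} yields $\g_{\bar 1}=[\g_{\bar 0},\g_{\bar 1}]\subseteq\mathfrak i$, so $\mathfrak i=\g$.

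The only genuine subtlety is the first reduction, namely arguing that an arbitrary, a priori non-graded, ideal must respect the weight decomposition and hence be graded; but this is immediate from the semisimplicity of the $\ad\h$-action together with $\dim\g^{\al}=1$. All the real work has been front-loaded into Lemmas \ref{lem9}--\ref{lem11}, so beyond this reduction the argument is a two-case bookkeeping that I expect to present no obstacle.
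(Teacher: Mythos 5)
Your argument is correct, and it rests on exactly the same three ingredients the paper uses: faithfulness of the $\g_{\bar 0}$-action on $\g_{\bar 1}$ (Lemma \ref{lem9}), the dichotomy for nonzero $\g_{\bar 0}$-submodules of $\g_{\bar 1}$ (Lemma \ref{lem10}) together with $[\g_{\bar 1},\g_{\bar 1}]=\g_{\bar 0}$, and $[\g_{\bar 0},\g_{\bar 1}]=\g_{\bar 1}$ (Lemma \ref{lem11}). The difference is purely one of packaging: the paper's proof is a two-line citation of the general simplicity criterion for Lie superalgebras (Musson, Lemma 1.2.1; Scheunert, Ch.~II \S 2.1), asserting that the three lemmas "establish the requirement," whereas you re-derive that criterion in situ. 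What your version buys is self-containedness and one extra piece of rigor the citation glosses over: by decomposing an arbitrary ideal $\mathfrak{i}$ into joint $\ad\h$-eigenspaces and using $\dim\g^{\al}=1$, you show any nonzero ideal is automatically $\Z_2$-graded, so simplicity holds with respect to all ideals and not merely graded ones. The two-case bookkeeping (killing $\mathfrak{i}$ when $\mathfrak{i}_{\bar 1}=0$ via faithfulness, and forcing $\g_{\bar 0}\subseteq\mathfrak{i}$ and then $\g_{\bar 1}\subseteq\mathfrak{i}$ when $\mathfrak{i}_{\bar 1}\neq 0$) is exactly the content of the cited criterion, so there is no gap; the only cosmetic addition I would make is the remark that $\g$ is not abelian, which is immediate from $[\g_{\bar 1},\g_{\bar 1}]=\g_{\bar 0}\neq 0$.
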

\begin{proof}
	Since from (\cite{musson2012lie}, Lemma 1.2.1) and from (\cite{scheunert2006theory}, Chap. II,$\S2.1$), we have the sufficient condition for simplicity of $\g$. Also in Lemmas (\ref{lem9}, \ref{lem10}, \ref{lem11}) we establish the requirement.
\end{proof}
    \section{Quiver diagram of Irreducible GRS}\label{sec5}
In this section we list all irreducible GRS with the set of positive even and odd roots, a distinguished basis with its Dynkin diagram and their quiver diagrams so that the roots are arranged in order to satisfy the Lemma \ref{lem6}.

1. $D(2,1;a)$
$$R_{\bar{0}}^+=\{2\ep_1,2\ep_2,2\ep_3\}$$ $$R_{\bar{1}}^+=\{\ep_2\pm\ep_1\pm\ep_3\}$$ $$\Pi=\{2\ep_1,\ep_2-\ep_1-\ep_3,2\ep_3\}$$
\begin{center}

			
		}	
		
	\end{center}

	{\bf Acknowledgement:} We are grateful to Malihe Yousofzadeh for fruitful discussions. The second author's work is supported by SERB (ANRF) grant (File No. SRG/2023/002255).

	\small
	\addcontentsline{toc}{section}{\bf References}
	\nocite{*}

	\bibliographystyle{plain}
	\bibliography{ref.bib}

@article{yousofzadeh2017locally,
  title={Locally finite root supersystems},
  author={Yousofzadeh, Malihe},
  journal={Communications in Algebra},
  volume={45},
  number={10},
  pages={4292--4320},
  year={2017},
  publisher={Taylor \& Francis}
}

@article{serganova1996generalizations,
  title={On generalizations of root systems},
  author={Serganova, Vera},
  journal={Communications in Algebra},
  volume={24},
  number={13},
  pages={4281--4299},
  year={1996},
  publisher={Taylor \& Francis}
}

@article{iohara2001central,
  title={Central extensions of {Lie} superalgebras},
  author={Iohara, Kenji and Koga, Yoshiyuki},
  journal={Commentarii Mathematici Helvetici},
  volume={76},
  number={1},
  pages={110--154},
  year={2001},
  publisher={Springer}
}

@article{kac1977lie,
  title={Lie superalgebras},
  author={Kac, Victor G},
  journal={Advances in mathematics},
  volume={26},
  number={1},
  pages={8--96},
  year={1977},
  publisher={Academic Press}
}

@article{kac1977sketch,
  title={A sketch of {Lie} superalgebra theory},
  author={Kac, Victor G},
  journal={Communications in Mathematical Physics},
  volume={53},
  number={1},
  pages={31--64},
  year={1977},
  publisher={Springer}
}

@article{geck2017construction,
  title={On the construction of semisimple {Lie} algebras and {Chevalley} groups},
  author={Geck, Meinolf},
  journal={Proceedings of the American Mathematical Society},
  volume={145},
  number={8},
  pages={3233--3247},
  year={2017}
}

@article{frappat1989structure,
  title={Structure of basic {Lie} superalgebras and of their affine extensions},
  author={Frappat, L and Sciarrino, Antonino and Sorba, Paul},
  journal={Communications in Mathematical Physics},
  volume={121},
  number={3},
  pages={457--500},
  year={1989},
  publisher={Springer}
}

@article{yousofzadeh2016extended,
  title={Extended affine root supersystems},
  author={Yousofzadeh, Malihe},
  journal={Journal of Algebra},
  volume={449},
  pages={539--564},
  year={2016},
  publisher={Elsevier}
}

@book{musson2012lie,
  title={Lie superalgebras and enveloping algebras},
  author={Musson, Ian Malcolm},
  volume={131},
  year={2012},
  publisher={American Mathematical Soc.}
}

@book{scheunert2006theory,
  title={The theory of {Lie} superalgebras: an introduction},
  author={Scheunert, Manfred},
  volume={716},
  year={2006},
  publisher={Springer}
}

@article{gorelik2017generalized,
  title={Generalized reflection root systems},
  author={Gorelik, Maria and Shaviv, Ary},
  journal={Journal of Algebra},
  volume={491},
  pages={490--516},
  year={2017},
  publisher={Elsevier}
}

\end{document}